\numberwithin{equation}{subsection}
\numberwithin{equation}{section} \pagestyle{myheadings}
\numberwithin{equation}{section} \pagestyle{myheadings}
\theoremstyle{plain}
\newtheorem{theorem}{Theorem}[section]
\newtheorem{corollary}{Corollary}[section]
\newtheorem{lemma}{Lemma}[section]
\newtheorem{remark}{Remark}[section]
\theoremstyle{definition}
\newtheorem{definition}{Definition}[section]
\numberwithin{equation}{section}
\newcommand{\U}{\mathbb U}
\let\c@equation\c@figure
\begin{document}
\flushbottom
\title[]{Inclusion and Majorization Properties of Certain Subclasses
of Multivalent Analytic Functions Involving a Linear Operator.}
\author{A.K.Sahoo }\address{$^{1}$Department of Mathematics,\\
VSS University of Technology,\\Sidhi Vihar,
Burla, Sambalpur-768017,India}\email{ashokuumt@gmail.com}

\begin{abstract}
The object of the present paper is to study certain properties and characteristics of
the operator $Q_{p,\beta}^{\alpha}$defined on p-valent analytic function by using technique of differential subordination.We also obtained result involving majorization problems by applying the operator to p-valent analytic function.Relevant connection of the the result are presented here with those obtained by earlier worker are pointed out.
\end{abstract}

\subjclass[2010] {30C45} \keywords {$p$-valent analytic
functions, Complex order, Inclusion relationships, Hadamard product, 
Subordination, Neighborhood.}
 \maketitle
\section {Introduction and preliminaries}
Let $\mathcal A_p(n)$ denote the class of functions of the form
\begin{equation}\label{eq2:eq1}
 f(z)=z^p+\sum_{k=n}^{\infty}a_{p+k}z^{p+k}\qquad (p,n \in \mathbb N=\{1,2,\dots\})
\end{equation}
 which are analytic and $p$-valent in the open unit disk $\mathbb U=\{z
 \in \mathbb C : |z|< 1\}.$ For convenience, we write $\mathcal
 A_p(1)=\mathcal A_p,\; \mathcal A_1(n)=\mathcal A(n)$ and  $\mathcal
 A_1(1)=\mathcal A.$

  For the function $f$, given by \eqref{eq2:eq1} and the function $g$ defined in
$\mathbb U$ by \[ g(z)=z^p+\sum_{k=n}^{\infty}b_{p+k}z^{p+k},\;\]
 the Hadamard product (or convolution) of $f$ and $g$ is given by
\begin{equation}\label{n2.1.2}
(f \star g)(z)=z^{p}+\sum_{k=n}^{\infty}a_{p+k}b_{p+k}z^{p+k}=(g
\star f)(z) \quad (z \in \mathbb U).
 \end{equation}

   For $p \in \mathbb N,\; m \in \mathbb N_{0}=\mathbb N \cup \{0\},\;
\alpha \in \mathbb R,\; \beta > 0$ with $\alpha+p\beta > 0$,\; Swamy
\cite{SRS1}( see also \cite{SRS2}) introduced and studied a linear
operator $I_{p,\alpha,\beta}^m: \mathcal A_p \longrightarrow
\mathcal A_p$ defined as follows:
\begin{align*}
I_{p,\alpha,\beta}^{0}f(z)=f(z), \quad
I_{p,\alpha,\beta}^{1}f(z)=I_{p,\alpha,\beta}f(z)=\dfrac{\alpha
f(z)+\beta zf^{\prime}(z)}{\alpha+p\beta},
\end{align*}
and, in general
\begin{align*}
I_{p,\alpha,\beta}^{m}f(z)=I_{p,\alpha,\beta}\left(I_{p,\alpha,\beta}^{m-1}f(z)\right)\quad
(m \in \mathbb N_{0}; z \in \mathbb U).
\end{align*}

If $f(z)=z^p+\sum_{k=p+1}^{\infty}a_{k}z^{k} \in \mathcal A_p$, then
it follows from the definition of the operator
$I_{p,\alpha,\beta}^{m}$ that
\begin{equation}\label{eq2:eq3}
I_{p,\alpha,\beta}^{m}f(z)=z^p+\sum_{k=p+1}^{\infty}\left(\dfrac{\alpha+k
\beta}{\alpha+p \beta}\right)^{m}a_{k}z^{k}\quad (m \in \mathbb
N_0=\mathbb N\cup\{0\}; z \in \mathbb U).
\end{equation}
Since $\alpha+k\beta > \alpha+p\beta >0$ for $k \geq p+1$, the
operator can be defined for negative integral values of  $m$ as
\begin{equation}\label{eq2:eq4}
I_{p,\alpha,\beta}^{-m}f(z)=z^p+\sum_{k=p+1}^{\infty}\left(\dfrac{\alpha+p
\beta}{\alpha+k\beta}\right)^{m}a_{k}z^{k}\quad (m \in \mathbb N_0;
z \in \mathbb U).
\end{equation}
 We, further observe that
\begin{align*}
I_{p,\alpha,\beta}^{-1}f(z)=\dfrac{\frac{\alpha}{\beta}+p}{z^{\frac{\alpha}{\beta}}}\int_0^z
t^{\frac{\alpha}{\beta}-1}f(t)dt=I_{p,\alpha,\beta}^{-1}\left(\dfrac{z^p}{1-z}\right)\star
f(z)\quad (z \in \mathbb U)
\end{align*}
so that
\begin{equation*}
I_{p,\alpha,\beta}^{-m}f(z)=\underbrace{I_{p,\alpha,\beta}^{-1}\left(\dfrac{z^p}{1-z}\right)
\star I_{p,\alpha,\beta}^{-1}\left(\dfrac{z^p}{1-z}\right)\star
\dots
I_{p,\alpha,\beta}^{-1}\left(\dfrac{z^p}{1-z}\right)}_{m-times}\star
f(z)\;(z \in \mathbb U).
\end{equation*}
Thus, in view of \eqref{eq2:eq3} and \eqref{eq2:eq4}, we define a
linear operator
\begin{equation*}
\Theta_{p}^{m}(n;\alpha, \beta): \mathcal A_{p}(n) \longrightarrow
\mathcal A_{p}(n)
\end{equation*} by
\begin{equation}\label{eq2:eq5}
\Theta_{p}^{m}(n;\alpha,\beta)f(z)=z^p+\sum_{k=p+n}^{\infty}
\left(\dfrac{\alpha+k \beta}{\alpha+p \beta}\right)^{m}a_{k}
z^{k}\quad (z \in \mathbb U)
\end{equation}
where $p,\; n \in \mathbb N,\; m \in \mathbb Z=\{\dots, -2, -1, 0,
1, 2, \dots\}, \alpha \in \mathbb R$ and $\beta > 0$ with
$\alpha+p\beta
>0.$ From \eqref{eq2:eq5}, it is easily seen that
\begin{equation}\label{eq2:eq6}
\beta\,z\left(\Theta_{p}^{m}(n;\alpha,\beta)f\right)^{\prime}(z)
=(\alpha+p\beta)\Theta_{p}^{m+1}(n;\alpha,\beta)f(z)-\alpha\,\Theta_{p}^{m}(n;\alpha,\beta)f(z)\quad
(m \in \mathbb Z; z \in \mathbb U).
\end{equation}
For convenience we write
\begin{equation}\label{2.1.6}
\Theta_{p}^{m}\left(1;\alpha,\beta\right)f(z)=\Theta_{p}^{m}\left(\alpha,\beta\right)f(z)
\;(z\in\mathbb U).\end{equation}
  Further, by suitably specializing the parameters $p,\;\alpha$ and $\beta$ in \eqref{2.1.6},  we
obtain the following linear operators studied earlier by various
authors.
\begin{enumerate}
\item[(i)]\;$\Theta_{1}^{m}(\alpha,\beta)f(z)=\mathcal I_{\alpha,
\beta}^{m}f(z)\quad (f \in \mathcal A; m \in \mathbb N_0)$\;(see
Swamy \cite{SRS2});
\item[(ii)]\;$\Theta_{p}^{m}\left(\alpha,\beta\right)f(z)=I_{p,\alpha,\beta}^{m}f(z)\quad (f \in \mathcal A_p
\; m \in \mathbb N_0)$ \;(see Swamy \cite{SRS1});
\item[(iii)]\;$\Theta_{p}^{m}(\ell+p-p\lambda,\lambda)f(z)=\mathcal
J_p^{m}(\lambda, \ell)\quad (f \in \mathcal A_p, \ell \geq 0,
\lambda>0; m \in \mathbb Z)$\;(see
C$\check{\text{a}}$tas \cite{ACatas});
\item[(iv)]\;$\Theta_{p}^{m}(\alpha,1)f(z)=\mathcal
I_{p}^{m}(\alpha)f(z)\quad (f \in \mathcal A_p, \alpha > -p; m \in
\mathbb N_0)$\;(see Aghalary \cite{Aghalary}, Shivaprasad et al.
\cite{SHV}, Srivastava et al. \cite{SSA});
\item[(v)]\;$\Theta_{p}^{m}(0,\beta)f(z)=\mathcal D_p^{m}f(z)\quad (f \in
\mathcal A_p, \beta > 0; m \in \mathbb N_0)$\;(see Aouf et al.
\cite{AMo}, Kamali et al. \cite{KO}, Orhan et al.\; \cite{HH});
\item[(vi)]\;$\Theta_{1}^{m}(\ell, 1)f(z)=\mathcal I_{\ell}^{m}f(z)\quad
(\ell > 0, m \in \mathbb N_{0})$\;(see Cho and Kim \cite{chokim},
Cho and Srivastava \cite{chosri});
\item[(vii)]\;$\Theta_{1}^{m}(1-\lambda, \lambda)f(z)=\mathcal
D_{\lambda}^{m}f(z)\quad (\lambda \geq 0, m \in \mathbb
N_{0})$\;(see Al-Oboudi \cite{fmal}), which yields the operator
$\mathcal D^{m}$ studied by Salagean \cite{sal}, for $\lambda=0$.
\end{enumerate}
Using the operator $\Theta_{p}^{m}(n;\alpha,\beta)$, we now define
\begin{definition}\label{def1}
For fixed parameters $A,B\,(-1 \leq B < A \leq 1),\; \beta > 0,
\;\mu\geq 0$ and $\alpha+p\beta > 0$, we say that a function $f \in
\mathcal A_p(n)$ is in the class $\mathcal S_{p,n}^{m}(\alpha,
\beta, \mu, A, B)$, if it satisfies the following subordination
condition:
\begin{equation}\label{eq2:eq7}
(1-\mu)\dfrac{\Theta_{p}^{m+1}(n;\alpha,\beta)f(z)}{\Theta_{p}^{m}(n;\alpha,\beta)f(z)}
+\mu\dfrac{\Theta_{p}^{m+2}(n;\alpha,\beta)f(z)}{\Theta_{p}^{m+1}(n;\alpha,\beta)f(z)}
\prec \dfrac{1+Az}{1+Bz}\quad (m \in \mathbb Z;z \in \mathbb U).
\end{equation}
\end{definition}

For ease of notation, we write
\begin{enumerate}
\item[(i)]\;$\mathcal S_{p,1}^{m}(\ell+p-p\lambda,\lambda, \mu; A, B)=\mathcal S_{p}^{m}(\lambda,\ell,\mu, A, B)$, the class of
functions $f \in \mathcal A_p$ satisfying the subordination
condition:
$$ (1-\mu)\dfrac{\mathcal J_p^{m+1}(\lambda, \ell)f(z)}{\mathcal
J_p^{m}(\lambda, \ell)f(z)} +\mu\dfrac{\mathcal J_p^{m+2}(\lambda,
\ell)f(z)}{\mathcal J_p^{m+1}(\lambda, \ell)f(z)} \prec
\dfrac{1+Az}{1+Bz}\quad (\lambda >0, \ell > -p, m \in \mathbb Z;z
\in \mathbb U);$$
\item[(ii)]\;$\mathcal S_{p,1}^{m}(\alpha, \beta, 1, A, B)=\mathcal
S_{p}^{m}(\alpha, \beta, A, B)$, the class of functions $f \in
\mathcal A_p$ satisfying the subordination
condition:$$\dfrac{\Theta_{p}^{m+2}(\alpha,\beta)f(z)}{\Theta_{p}^{m+1}(\alpha,\beta)f(z)}
\prec \dfrac{1+Az}{1+Bz}\quad ( z \in \mathbb U);$$
\item[(iii)]\;$\mathcal S_{p,1}^{m}(\alpha, \beta, \mu, A, B)=\mathcal
S_{p}^{m}(\alpha, \beta, \mu, A, B)$, the class of functions $f \in
\mathcal A_p$ satisfying the subordination condition
\eqref{eq2:eq7};
\item[(iv)]\;$\mathcal S_{p,1}^{m}\left(\alpha, \beta,\mu,
1-2\rho,-1\right)=\mathcal S_{p}^{m}\left(\alpha, \beta,\mu,
\rho\right)$, the class of functions $f \in \mathcal A_p$ satisfying
$$\text{Re}\left\{(1-\mu)\dfrac{\Theta_{p}^{m+1}(\alpha,\beta)f(z)}{\Theta_{p}^{m}(\alpha,\beta)f(z)}
+\mu\dfrac{\Theta_{p}^{m+2}(\alpha,\beta)f(z)}{\Theta_{p}^{m+1}(\alpha,\beta)f(z)}\right\}>
\rho\quad (0 \leq \rho < 1; z \in \mathbb U);$$
\item[(v)]\;$\mathcal S_{p,1}^{m}\left(\alpha, \beta,0,
1-2\rho,-1\right)=\mathcal S_{p}^{m-1}\left(\alpha, \beta, 1,
1-2\rho,-1\right)=\mathcal S_p^{m}(\alpha, \beta;\rho)$, the class
of functions $f \in \mathcal A_p$ satisfying
$$\text{Re}\left\{\dfrac{\Theta_p^{m+1}(\alpha,
\beta)f(z)}{\Theta_p^{m}(\alpha, \beta)f(z)}\right\}
> \rho\quad (0 \leq \rho < 1; z \in \mathbb U);$$
\item[(vi)]$\mathcal S_{p,1}^{m-1}\left(\alpha, \beta,1,\dfrac{A\beta(p-\eta)+B(\alpha+\beta\eta)}{\alpha+p\beta},B\right)$\\
$=\mathcal S_{p,1}^{m}\left(\alpha,
\beta,0,\dfrac{A\beta(p-\eta)+B(\alpha+\beta\eta)}{\alpha+p\beta},B\right)
 =\mathcal S_{p,\alpha,\beta}^{m}(\eta;A,B)$\\
The class $\mathcal S_{p,\alpha,\beta}^{m}(\eta;A,B)$ of functions
$f \;\text{in}\;\; \mathcal A_p$ satisfying the subordination
condition:$$\dfrac{1}{p-\eta}\left\lbrace
\dfrac{z\left(\Theta_p^{m}(\alpha,
\beta)f\right)^{\prime}(z)}{\Theta_p^{m}(\alpha,
\beta)f(z)}-\eta\right\rbrace \prec \dfrac{1+Az}{1+Bz}\quad (z \in
\mathbb U).$$ This class was introduced  and studied by Swamy
\cite{SRS1}, which in turn yields the class $\mathcal
S_{\alpha,\beta}^{m}(\eta;A,B)$ studied in \cite{SRS2} for $p=1$.
\item[(vii)]\;$\mathcal S_{p,1}^{0}(0, \beta, 0, A, B)=\mathcal
S_{p}^{\ast}(A, B)$, the class of functions $f \in \mathcal A_p$
satisfying the subordination
condition:$$\dfrac{zf^{\prime}(z)}{f(z)} \prec
\dfrac{p(1+Az)}{1+Bz}\quad (z \in \mathbb U).$$
\item[(viii)]\;$\mathcal S_{p,1}^{0}(0, \beta, 1, A, B)=\mathcal
S_{p}^{1}(0, \beta, 0; A, B)=\mathcal C_p(A,B)$, the class of
functions $f \in \mathcal A_p$ satisfying the subordination
condition:
$$1+\dfrac{zf^{\prime\prime}(z)}{f^{\prime}(z)}
\prec \dfrac{p(1+Az)}{1+Bz}\quad (z \in \mathbb U).$$
\end{enumerate}

  In the present investigation, we introduce a subclass $\mathcal S_{p,n}^{m}(\alpha,
\beta, \mu, A, B)$ of $\mathcal A_{p}(n)$. We derive certain
inclusion relationships, some useful characteristics and
majorization properties for the class $\mathcal S_{p,n}^{m}(\alpha,
\beta, \mu, A, B)$. The results obtained here in addition to
generalizing some of the work of Patel et al.~\cite{patel2} and
MacGregor~\cite{mag1} improves the corresponding work of
Swamy~\cite{SRS1}. We also obtain a number of new results for
functions belonging to this class in terms of subordination and the
various subclasses obtained as special cases of the class $\mathcal
S_{p,n}^{m}(\alpha, \beta, \mu, A, B).$
\section{Preliminary Lemmas}

To derive our main results, we shall need the following lemmas.
\begin{lemma} \cite{DJR}, see also \cite[p.71]{MM2}.\label{lem1}
Let $h$ be an analytic and convex (univalent) function in $\mathbb
U$ with $h(0)=1$  and $\phi$ be given by
\begin{equation} \label{eq2.2:eq1}
\phi(z)=1+c_{n}z^{n}+c_{n+1}z^{n+1}+\dots \quad (n \in \mathbb N; z
\in \mathbb U).
\end{equation}
If
\begin{equation} \label{eq2.2:eq2}
\phi(z)+\dfrac{z \phi^{\prime}(z)}{\gamma} \prec h(z)\quad
(\text{Re}(\gamma) \geq 0, \gamma \ne 0; z \in \mathbb U),
\end{equation}
then
\begin{equation*}
\phi(z) \prec \psi(z)=\dfrac{\gamma}{n}z^{-\frac{\gamma}{n}}\int_0^z
t^{\frac{\gamma}{n}-1}h(t)dt \prec h(z)\quad (z \in \mathbb U)
\end{equation*}
and the function $\psi$ is the best dominant of \eqref{eq2.2:eq2}.
\end{lemma}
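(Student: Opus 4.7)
The lemma is the classical Hallenbeck–Ruscheweyh estimate and I would attack it via the standard differential-subordination machinery of Miller and Mocanu. The plan is to (i) identify $\psi$ as the unique analytic solution of a first-order linear ODE naturally attached to the problem, (ii) verify $\psi \prec h$ by writing $\psi$ as a continuous convex combination of values of $h$, and (iii) deduce the main subordination $\phi \prec \psi$ by a boundary-point argument, from which the best-dominant claim follows.

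For step (i), multiplying the defining integral of $\psi$ by $z^{\gamma/n}$, differentiating and clearing factors gives, after elementary manipulation, the ODE
\begin{equation*}
 \psi(z) + \frac{n\,z\,\psi'(z)}{\gamma} = h(z), \qquad \psi(0)=1,
\end{equation*}
whose unique analytic solution in $\mathbb U$ is precisely the $\psi$ of the statement. For step (ii), the change of variable $t = sz$ rewrites $\psi$ as
\begin{equation*}
 \psi(z) = \int_0^1 h(sz)\, d\sigma(s), \qquad d\sigma(s) = \tfrac{\gamma}{n}\, s^{\gamma/n - 1}\, ds.
\end{equation*}
When $\gamma>0$ is real, $\sigma$ is a probability measure on $[0,1]$ and the convexity of $h$ immediately yields $\psi \prec h$; for complex $\gamma$ with $\mathrm{Re}(\gamma)\geq 0$ one combines the same identity with a Herglotz-type integral representation of the convex function $h$ and swaps the order of integration.

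For step (iii) I would argue by contradiction. Assume $\phi \not\prec \psi$; since $\phi(0)=\psi(0)=1$ and $\psi$ is univalent (being the integral average of a convex function, it is itself convex), there exist $z_0 \in \mathbb U$ and $\zeta_0 \in \partial\mathbb U$ with $\phi(z_0)=\psi(\zeta_0)$ while $\phi\bigl(\{|z|<|z_0|\}\bigr)\subset \psi(\mathbb U)$. The Miller–Mocanu boundary lemma, which refines Jack's lemma for functions in the class $H[1,n]$, then yields a real $k\geq n$ with $z_0\phi'(z_0)=k\,\zeta_0\psi'(\zeta_0)$. Substituting this into $\phi(z_0)+z_0\phi'(z_0)/\gamma$ and using the ODE from step (i) to eliminate $\psi(\zeta_0)$, one obtains
\begin{equation*}
 \phi(z_0) + \frac{z_0 \phi'(z_0)}{\gamma} \;=\; h(\zeta_0) + \frac{k-n}{\gamma}\,\zeta_0\,\psi'(\zeta_0).
\end{equation*}
Because $h$ is convex, $h(\zeta_0)$ lies on $\partial h(\mathbb U)$, and the admissibility lemma for convex functions shows that $\zeta_0\psi'(\zeta_0)/\gamma$ points along the outward normal to $h(\mathbb U)$ at $h(\zeta_0)$, so for $k\geq n$ the right-hand side lies outside $\overline{h(\mathbb U)}$. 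This contradicts the hypothesis $\phi(z)+z\phi'(z)/\gamma \prec h(z)$, and proves $\phi \prec \psi$. The best-dominant assertion is then built into this scheme: any competing dominant $q$ satisfies the same ODE in a limiting sense, forcing $\psi \prec q$.

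The main obstacle is the boundary analysis in step (iii), specifically the geometric claim that $\zeta_0\psi'(\zeta_0)/\gamma$ points outward from $h(\mathbb U)$ at $h(\zeta_0)$ whenever $k\geq n$; this is where the convexity of $h$ (and not merely its univalence) is genuinely used. The purely imaginary case $\mathrm{Re}(\gamma)=0$ is additionally delicate and is typically handled by first establishing the result for $\mathrm{Re}(\gamma)>0$ and then passing to the limit via a normal-families compactness argument on the family of dominants.
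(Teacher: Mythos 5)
The paper does not prove this lemma at all: it is quoted as a known result of Hallenbeck and Ruscheweyh \cite{DJR} (see also \cite[p.71]{MM2}), so there is no internal argument to compare yours against. On its own terms, your outline is the standard admissible-functions proof from the Miller--Mocanu monograph, and its skeleton is sound: the ODE $\psi(z)+n\,z\psi'(z)/\gamma=h(z)$ in step (i) is correct, and the identity $\phi(z_0)+z_0\phi'(z_0)/\gamma=h(\zeta_0)+\tfrac{k-n}{\gamma}\zeta_0\psi'(\zeta_0)$ is exactly the computation that drives the contradiction. It is worth noting that the original Hallenbeck--Ruscheweyh argument takes a different route: it writes $\phi(z)=\int_0^1P(sz)\,d\sigma(s)$ with $P=\phi+z\phi'/\gamma\prec h$ and invokes the Ruscheweyh--Sheil-Small convolution theorem to show that this averaging operator preserves subordination into a convex target; that route yields the convexity of $\psi$ as a byproduct rather than needing it as an input.

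Three of your justifications would not survive as written. First, the geometric claim in step (iii) should not be that $\zeta_0\psi'(\zeta_0)/\gamma$ ``points along the outward normal''; what the ODE actually gives is $\tfrac{k-n}{\gamma}\zeta_0\psi'(\zeta_0)=\tfrac{k-n}{n}\bigl(h(\zeta_0)-\psi(\zeta_0)\bigr)$, and the correct finish is a convex-combination argument: if $h(\zeta_0)+c\bigl(h(\zeta_0)-\psi(\zeta_0)\bigr)$ with $c\geq 0$ lay in the open convex set $h(\mathbb U)$, then $h(\zeta_0)$ would be a convex combination of an interior point and the point $\psi(\zeta_0)\in\overline{h(\mathbb U)}$, hence interior, contradicting $h(\zeta_0)\in\partial h(\mathbb U)$. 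Second, ``the integral average of a convex function is convex'' is not a valid general principle (sums of convex maps need not even be univalent); the convexity of $\psi$, which you need before the boundary lemma may be applied, is a genuine theorem resting on the convexity of the kernel ${}_2F_1(1,\gamma/n;\gamma/n+1;z)$ together with the Ruscheweyh--Sheil-Small theorem. Third, the best-dominant assertion is not that ``any competing dominant satisfies the same ODE''; the standard device is to exhibit the extremal solution $\phi_0(z)=\psi(z^n)=1+O(z^n)$, which satisfies $\phi_0(z)+z\phi_0'(z)/\gamma=h(z^n)\prec h(z)$ and has $\phi_0(\mathbb U)=\psi(\mathbb U)$, so that every dominant $q$ must satisfy $\psi\prec q$. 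These are repairable gaps rather than wrong turns, and your treatment of the delicate cases ($\mathrm{Re}(\gamma)=0$ by a limiting argument, boundary regularity via dilations) is correctly flagged, but each of the three points above currently rests on an assertion in place of a proof.
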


We recall the definition of the class $\mathcal P(\gamma)\;(0 \leq
\gamma < 1)$(cf., Section 1.2) consisting of all functions of the
form
\begin{equation}\label{eq2.2:eq3}
\phi(z)=1+c_1z+c_2z^2+\cdots\quad (z \in \U)
\end{equation}
such that $\text{Re}\{\phi(z)\} > \gamma$ in $\U.$ We have

\begin{lemma}\cite{DZ}.\label{lem2} \;If the function $\phi$, given by \eqref{eq2.2:eq3}
 belongs to the class $\mathcal P(\gamma)$, then
 \begin{equation*}
 \text{Re}\{\phi(z)\} \geq 2\gamma-1+\dfrac{2(1-\gamma)}{1+|z|}\quad(0
 \leq \gamma < 1; z \in \mathbb U).
 \end{equation*}
  \end{lemma}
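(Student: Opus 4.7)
The plan is to reduce the statement to the classical sharp lower bound for the real part of a Carathéodory function, and then transfer the estimate back via the affine map relating $\mathcal P(\gamma)$ to $\mathcal P = \mathcal P(0)$. Concretely, if $\phi \in \mathcal P(\gamma)$, the function
\[
p(z) = \frac{\phi(z) - \gamma}{1 - \gamma}
\]
is analytic in $\mathbb U$, satisfies $p(0) = 1$, and has $\mathrm{Re}\,p(z) > 0$, i.e.\ $p$ is in the Carathéodory class $\mathcal P$. Since
\[
\mathrm{Re}\,\phi(z) = \gamma + (1-\gamma)\,\mathrm{Re}\,p(z),
\]
it suffices to show that every $p \in \mathcal P$ obeys $\mathrm{Re}\,p(z) \ge (1-|z|)/(1+|z|)$; a direct algebraic simplification then turns this bound into $2\gamma - 1 + 2(1-\gamma)/(1+|z|)$.

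To prove the lower bound for $p$, I would invoke the Herglotz representation of the Carathéodory class: there exists a probability measure $\mu$ on the unit circle $\partial\mathbb U$ such that
\[
p(z) = \int_{|x|=1} \frac{1 + x z}{1 - x z}\, d\mu(x) \qquad (z \in \mathbb U).
\]
Taking real parts yields
\[
\mathrm{Re}\,p(z) = \int_{|x|=1} \frac{1 - |z|^2}{|1 - x z|^2}\, d\mu(x).
\]

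The only pointwise estimate needed is the triangle inequality $|1 - x z| \le 1 + |z|$ (valid for $|x| = 1$, $z \in \mathbb U$), which gives $|1 - xz|^2 \le (1+|z|)^2$ and hence
\[
\frac{1 - |z|^2}{|1 - x z|^2} \ge \frac{1 - |z|^2}{(1+|z|)^2} = \frac{1 - |z|}{1 + |z|}.
\]
Integrating against the probability measure $\mu$ preserves this lower bound, so $\mathrm{Re}\,p(z) \ge (1-|z|)/(1+|z|)$. Inserting this into $\mathrm{Re}\,\phi(z) = \gamma + (1-\gamma)\,\mathrm{Re}\,p(z)$ and simplifying produces exactly
\[
\mathrm{Re}\,\phi(z) \ge \gamma + (1-\gamma)\frac{1-|z|}{1+|z|} = 2\gamma - 1 + \frac{2(1-\gamma)}{1+|z|},
\]
as required.

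No step presents a serious obstacle: the affine reduction to $\mathcal P$ is immediate, the triangle-inequality estimate is elementary, and the final identity is purely algebraic. The one step that must be cited rather than proved on the spot is the Herglotz representation of Carathéodory functions; if one prefers to avoid it, the same lower bound can be obtained by a subordination argument (observing that any $p \in \mathcal P$ is subordinate to $(1+z)/(1-z)$, whose real part equals $(1-|z|^2)/|1-z|^2$, and applying the Lindelöf principle), which is the route most in line with the subordination techniques already in use in the paper.
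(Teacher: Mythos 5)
Your proof is correct. Note, however, that the paper does not prove this lemma at all: it is quoted verbatim from Pashkouleva \cite{DZ} and used as a black box, so there is no internal argument to compare yours against. Your derivation is the standard one and is sound in every step: the affine change of variable $p=(\phi-\gamma)/(1-\gamma)$ correctly reduces the claim to the sharp bound $\mathrm{Re}\,p(z)\ge(1-|z|)/(1+|z|)$ for the Carath\'eodory class, the Herglotz representation gives $\mathrm{Re}\,p(z)=\int_{|x|=1}(1-|z|^2)\,|1-xz|^{-2}\,d\mu(x)$, the estimate $|1-xz|\le 1+|z|$ is immediate, and the final algebraic identity $\gamma+(1-\gamma)(1-|z|)/(1+|z|)=2\gamma-1+2(1-\gamma)/(1+|z|)$ checks out. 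Your closing remark is also apt: the subordination route ($p\prec(1+z)/(1-z)$ together with the Lindel\"of principle) avoids invoking the Herglotz theorem and is more in keeping with the techniques used elsewhere in this paper, though either version is a complete and acceptable proof of the cited inequality.
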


 \begin{lemma} \cite{MM3}\label{lem3}
 .\;If $-1 \leq B < A \leq 1, \beta^{*} > 0$ and the complex number
 $\gamma^{*}$ is constrained by $\text{Re}(\gamma^{*}) \geq
 -\beta^{*}(1-A)/(1-B)$, then the following differential equation:
  \begin{equation*}
 q(z)+\dfrac{zq^{\prime}(z)}{\beta^{*}q(z)+\gamma^{*}}=\dfrac{1+Az}{1+Bz}\quad
 (z \in \mathbb U)
 \end{equation*}
  has a univalent solution in $\mathbb U$ given by
 \begin{equation}\label{eq2.2:eq4}
 q(z)=\begin{cases}
 \dfrac{z^{\beta^{*}+\gamma^{*}}(1+Bz)^{\beta^{*}(A-B)/B}}{\beta^{*}\int_{0}^{z}t^{\beta^{*}
 +\gamma^{*}-1}(1+Bt)^{\beta^{*}(A-B)/B}\,dt}-\dfrac{\gamma^{*}}{\beta^{*}}, &  B \ne 0\\
\dfrac{z^{\beta^{*}+\gamma^{*}}\exp(\beta^{*}Az)}{\beta^{*}\int_{0}^{z}t^{\beta^{*}
 +\gamma^{*}-1}\exp(\beta^{*}At)dt}-\dfrac{\gamma^{*}}{\beta^{*}}, &
 B=0.
\end{cases}
\end{equation}
\end{lemma}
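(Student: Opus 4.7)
The plan is to linearize the nonlinear first-order ODE by a two-step substitution, integrate in closed form, and then address univalence. I would first set $w(z) = \beta^{*}q(z) + \gamma^{*}$, which turns the given equation into
$$w(z) + \frac{zw'(z)}{w(z)} \;=\; \beta^{*}\,\frac{1+Az}{1+Bz} + \gamma^{*}.$$
Next, introduce an auxiliary $u$ by $w(z) = zu'(z)/u(z)$. A direct computation (differentiating $zu'/u$) yields $zw'/w = 1 + zu''/u' - w$, so the nonlinear equation collapses to the linear relation
$$\frac{u''(z)}{u'(z)} \;=\; \frac{1}{z}\Bigl(\beta^{*}\,\frac{1+Az}{1+Bz} + \gamma^{*} - 1\Bigr).$$

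Splitting the right-hand side as $(\beta^{*}+\gamma^{*}-1)/z + \beta^{*}(A-B)/(1+Bz)$ and integrating once gives
$$u'(z) \;=\; \beta^{*}\,z^{\beta^{*}+\gamma^{*}-1}(1+Bz)^{\beta^{*}(A-B)/B}\qquad(B\neq 0),$$
with the analogous exponential expression when $B=0$. A second integration from $0$ to $z$ produces $u$; the integral converges at $0$ because the hypothesis forces
$$\text{Re}(\beta^{*}+\gamma^{*}) \;\ge\; \beta^{*}\,\frac{A-B}{1-B} \;>\; 0.$$
Unwinding $q = (w-\gamma^{*})/\beta^{*}$ together with $w = zu'/u$ then reproduces precisely the formulas in \eqref{eq2.2:eq4}, and substituting back into the original ODE verifies that these explicit expressions are solutions.

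The main obstacle is univalence, which is where the sharp lower bound on $\text{Re}(\gamma^{*})$ genuinely enters. The existence of a formal solution is routine, but showing that the resulting $q$ is actually one-to-one in $\mathbb U$ requires more. My plan is to prove that the primitive $u$ is starlike in $\mathbb U$; granted this, $zu'(z)/u(z) = \beta^{*}q(z) + \gamma^{*}$ has positive real part and, more importantly, is itself univalent, forcing $q$ to be univalent as well. The starlikeness of $u$ is extracted from a standard integral-operator criterion for Pochhammer-type integrals, and the hypothesis $\text{Re}(\gamma^{*}) \ge -\beta^{*}(1-A)/(1-B)$ is precisely the threshold needed to activate it. This last step is the substantive content of the lemma and is the classical Miller--Mocanu Briot--Bouquet argument; the first two paragraphs above are the mechanical quadratures that produce the explicit formula.
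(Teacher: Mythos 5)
First, note that the paper does not prove this lemma at all: it is quoted verbatim from Miller and Mocanu \cite{MM3} (see also \cite{MM2}), so there is no in-paper argument to compare yours against; I am judging the proposal on its own terms. Your quadrature is correct and complete: the substitutions $w=\beta^{*}q+\gamma^{*}$ and $w=zu'/u$ do linearize the equation to $zu''/u'=\beta^{*}\frac{1+Az}{1+Bz}+\gamma^{*}-1$, the partial-fractions split and two integrations give $u'(z)=\beta^{*}z^{\beta^{*}+\gamma^{*}-1}(1+Bz)^{\beta^{*}(A-B)/B}$, the hypothesis on $\gamma^{*}$ gives $\mathrm{Re}(\beta^{*}+\gamma^{*})\geq\beta^{*}(A-B)/(1-B)>0$ so the integral defining $u$ converges at the origin, and unwinding reproduces \eqref{eq2.2:eq4} exactly. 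Establishing that $u$ is starlike (via the open-door/integral-operator criterion, for which the stated bound on $\mathrm{Re}(\gamma^{*})$ is precisely the condition $\mathrm{Re}[\beta^{*}\frac{1+Az}{1+Bz}+\gamma^{*}]>0$) is also the right move: it shows $u$ vanishes only at $0$, so $q$ is analytic in $\mathbb U$, and it gives $\mathrm{Re}(\beta^{*}q+\gamma^{*})>0$.

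The gap is your final inference. Starlikeness of $u$ does \emph{not} imply that $zu'(z)/u(z)$ is univalent, so you cannot conclude univalence of $q$ this way. A concrete counterexample: $u(z)=z/(1-z^{2})$ is starlike in $\mathbb U$, yet $zu'(z)/u(z)=(1+z^{2})/(1-z^{2})$ is an even function and hence fails to be univalent in any neighbourhood of the origin. Positivity of the real part of $zu'/u$ likewise gives no univalence. The univalence of $q$ is exactly the hard content of the Miller--Mocanu theorem, and their argument runs through a different mechanism: from the equation one gets the identity $zq'(z)=[\beta^{*}q(z)+\gamma^{*}]\,[h(z)-q(z)]$ with $h(z)=(1+Az)/(1+Bz)$ convex, and univalence is then extracted by showing that $L(z,t)=q(z)+t\,zq'(z)/[\beta^{*}q(z)+\gamma^{*}]$ is a subordination (Loewner) chain joining $q=L(\cdot,0)$ to $h=L(\cdot,1)$, using $\mathrm{Re}(\beta^{*}q+\gamma^{*})>0$ and the convexity of $h$ to verify Pommerenke's criterion. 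Without some argument of this kind (or an explicit close-to-convexity verification), your proof establishes existence and the closed form of a solution but not its univalence.
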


If the function $\phi$, given by \eqref{eq2.2:eq3} is analytic in
$\mathbb U$ and satisfies the following subordination:

\begin{equation}\label{eq2.2:eq5}
\phi(z)+\dfrac{z\phi^{\prime}(z)}{\beta^{*}\phi(z)+\gamma^{*}} \prec
\dfrac{1+Az}{1+Bz}\quad (z \in \mathbb U),
\end{equation}
then
\begin{equation*}
\phi(z) \prec q(z) \prec \dfrac{1+Az}{1+Bz}\quad (z \in \mathbb U)
\end{equation*}
and $q$ is the best dominant of \eqref{eq2.2:eq5}.

\begin{lemma} \cite{DRW} \label{lem4}.\;Let $\nu$ be a positive measure on $[0,1]$.
Let $h(z,t)$ be a complex-valued function defined on $\mathbb U
\times [0,1]$ such that $h(\cdot, t)$ is analytic in $\mathbb U$ for
each $t \in [0,1]$ and that $h(\cdot, t)$ is $\nu$-integrable on
$[0,1]$ for all $z \in \mathbb U$. In addition, suppose that
$\text{Re}\left\{h(\cdot, t)\right\} > 0, h(-r,t)$ is real and
\begin{equation*}
\text{Re}\left\{\dfrac{1}{h(z,t)}\right\}
\geq\dfrac{1}{h(-r,t)}\quad (|z| \leq r < 1, t \in [0,1]).
\end{equation*}
If the function $\mathcal H$ is defined in $\mathbb U$ by
\begin{equation*}
\mathcal H(z)=\int_0^1 h(z,t)\,d\nu(t),
\end{equation*}
then
\begin{equation*}
\text{Re}\left\{\dfrac{1}{\mathcal H(z)}\right\} \geq
\dfrac{1}{\mathcal H(-r)}.
\end{equation*}
\end{lemma}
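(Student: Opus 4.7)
The plan is to reduce the real-part inequality for $\mathcal H$ to a pointwise bound on the integrand and then integrate via Cauchy--Schwarz. First I would rewrite the hypothesis in a more usable form: since $\text{Re}(1/w)=\text{Re}(w)/|w|^{2}$ for every $w\ne 0$, the condition $\text{Re}\{1/h(z,t)\}\geq 1/h(-r,t)$ is equivalent to
\[
|h(z,t)|^{2}\ \leq\ h(-r,t)\,\text{Re}\{h(z,t)\}\qquad(|z|\leq r,\ t\in[0,1]),
\]
using $h(-r,t)>0$, which follows from $\text{Re}\{h(-r,t)\}>0$ together with $h(-r,t)$ being real.

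Next, because $h(-r,\cdot)$ is strictly positive on $[0,1]$, I would factor $h(z,t)=\sqrt{h(-r,t)}\cdot h(z,t)/\sqrt{h(-r,t)}$ and apply the Cauchy--Schwarz inequality against the positive measure $\nu$:
\[
|\mathcal H(z)|^{2}=\left|\int_{0}^{1}h(z,t)\,d\nu(t)\right|^{2}\leq\Bigl(\int_{0}^{1}h(-r,t)\,d\nu(t)\Bigr)\Bigl(\int_{0}^{1}\frac{|h(z,t)|^{2}}{h(-r,t)}\,d\nu(t)\Bigr).
\]
The first factor is exactly $\mathcal H(-r)$, a positive real number. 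By the pointwise bound above, $|h(z,t)|^{2}/h(-r,t)\leq \text{Re}\{h(z,t)\}$, so the second factor is majorized by $\int_{0}^{1}\text{Re}\{h(z,t)\}\,d\nu(t)=\text{Re}\{\mathcal H(z)\}$.

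Combining these estimates yields $|\mathcal H(z)|^{2}\leq \mathcal H(-r)\,\text{Re}\{\mathcal H(z)\}$. Dividing through by $|\mathcal H(z)|^{2}$ (which is nonzero since $\text{Re}\{\mathcal H(z)\}>0$ by positivity of the real part of $h$ and of $\nu$) and rewriting the resulting left-hand side as $\text{Re}\{\mathcal H(z)\}/|\mathcal H(z)|^{2}=\text{Re}\{1/\mathcal H(z)\}$ gives the desired inequality. The only delicate point is justifying the Cauchy--Schwarz step: one must verify that $h(-r,t)>0$ on $[0,1]$ and the $\nu$-integrability of the auxiliary weights $\sqrt{h(-r,\cdot)}$ and $|h(z,\cdot)|^{2}/h(-r,\cdot)$, both of which follow from the standing hypotheses (positivity of real part, $\nu$-integrability of $h(\cdot,t)$, and the pointwise estimate already established). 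Once that bookkeeping is in place, the remainder of the argument is purely algebraic.
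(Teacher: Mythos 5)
Your argument is correct: the identity $\mathrm{Re}(1/w)=\mathrm{Re}(w)/|w|^{2}$ turns the hypothesis into the pointwise bound $|h(z,t)|^{2}\leq h(-r,t)\,\mathrm{Re}\{h(z,t)\}$, and the weighted Cauchy--Schwarz step then gives $|\mathcal H(z)|^{2}\leq \mathcal H(-r)\,\mathrm{Re}\{\mathcal H(z)\}$, which is exactly the assertion. The paper does not prove this lemma --- it is quoted from Wilken and Feng \cite{DRW} --- and your proof is essentially the one in that reference, so there is nothing to add beyond the bookkeeping you already flag (strict positivity of $h(-r,\cdot)$ and integrability of the auxiliary weights).
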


 Each of the identities given below are well-known (see Whittaker
et al.~\cite[Chapter 14]{WW}) for the hypergeometric function
${_2}F_{1}.$

\begin{lemma}\label{lem5} \cite{WW}
For real or complex numbers $a,\;b$ and $c\;(c \ne 0, -1, -2,
\ldots),$ we have
\begin{align} \label{eq2.2:eq7}
&\int_0^1t^{b-1}(1-t)^{c-b-1}(1-tz)^{-a}\,dt=\dfrac{\Gamma(b)\Gamma(c-b)}{\Gamma(c)}\;_2F_1(a,b;c;z)\quad
(\text{Re}(c) > \text{Re}(b) > 0);\\
&_2F_1(a,b;c;z)=(1-z)^{-a}\;_2F_1\left(a,c-b;c;\frac{z}{z-1}\right);\label{eq2.2:eq8}\\
& _2F_1(a, b; c; z) =\;_2F_1(b, a; c; z);\label{eq2.2:eq9}\\
&(a+1)\;_2F_1(1, a; a+1; z)=(a+1)+az\;_2F_1(1, a+1;
a+2;z).\label{eq2.2:eq10}
\end{align}
\end{lemma}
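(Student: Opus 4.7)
The plan is to establish the four identities via elementary bookkeeping with the hypergeometric series
\[
{_2}F_1(a,b;c;z)=\sum_{n=0}^{\infty}\frac{(a)_n(b)_n}{(c)_n\,n!}\,z^n,
\]
together with the Beta integral $B(p,q)=\Gamma(p)\Gamma(q)/\Gamma(p+q)$. None of the four statements requires ingenuity beyond tracking Pochhammer symbols, which is why Whittaker and Watson collect them as standard material rather than as theorems.

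For \eqref{eq2.2:eq7} I would expand $(1-tz)^{-a}=\sum_{n\ge 0}\frac{(a)_n}{n!}(tz)^n$ by the generalized binomial series. This expansion converges uniformly in $t\in[0,1]$ whenever $|z|<1$, so one may swap sum and integral. Each term then reduces to a Beta integral $\int_0^1 t^{b+n-1}(1-t)^{c-b-1}dt=\Gamma(b+n)\Gamma(c-b)/\Gamma(c+n)$, and the identity $\Gamma(b+n)/\Gamma(b)=(b)_n$ (and similarly for $c$) assembles exactly the right-hand side of \eqref{eq2.2:eq7}.

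Identity \eqref{eq2.2:eq9} is immediate from the symmetry of the general term in $a$ and $b$. For \eqref{eq2.2:eq8} (Pfaff's transformation) the cleanest route is to apply \eqref{eq2.2:eq7} to ${_2}F_1(a,c-b;c;w)$ with $w=z/(z-1)$, substitute $s=1-t$, and observe that $1-sw$ becomes $1-(1-t)\tfrac{z}{z-1}=(1-z)^{-1}(1-tz)$ up to sign; pulling the factor $(1-z)^{-a}$ outside converts the resulting integral into the Euler representation of the left-hand side via \eqref{eq2.2:eq7} again. A brief analytic-continuation remark extends the equality from a neighborhood of $z=0$ to the full cut plane.

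Finally \eqref{eq2.2:eq10} follows by direct series comparison. Using $(1)_n=n!$ and $(a)_n/(a+1)_n=a/(a+n)$, the left side equals $(a+1)\sum_{n\ge 0}\tfrac{a}{a+n}z^n$. Applying the same simplification to ${_2}F_1(1,a+1;a+2;z)$, multiplying by $az$, and shifting the summation index by one reproduces every term with $n\ge 1$, leaving the constant $(a+1)$ to account for the $n=0$ contribution. The only step that needs any care at all is the interchange of summation and integration in the first item; the hypotheses $|z|<1$ and $\mathrm{Re}(c)>\mathrm{Re}(b)>0$ handle it, and I do not anticipate any substantive obstacle in any of the four parts.
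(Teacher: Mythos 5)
Your derivations are correct: the term-by-term Beta-integral computation for \eqref{eq2.2:eq7}, the $t\mapsto 1-t$ substitution in Euler's integral for Pfaff's transformation \eqref{eq2.2:eq8} (with analytic continuation to relax the real-part constraints), the series symmetry for \eqref{eq2.2:eq9}, and the reduction $(a)_n/(a+1)_n=a/(a+n)$ with an index shift for \eqref{eq2.2:eq10} all check out. The paper offers no proof of this lemma, citing it directly to Whittaker and Watson, and your argument is exactly the standard one found there, so there is nothing to compare beyond noting that you have supplied the omitted verification correctly.
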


\section{Inclusion relationships}

 Unless otherwise mentioned, we shall assume throughout the
sequel that $\alpha \in \mathbb R,\;\beta>0,$\\$\alpha+p\beta > 0,\;
\mu > 0,\;m \in \mathbb Z,\; -1 \leq B < A \leq 1$ and the powers
are understood as principal values.

In this section, we establish some inclusion relationships involving
the class\\ $S_{p,n}^{m}(\alpha,\beta,\mu,A,B)$.

\begin{theorem}\label{theorem 2.3.1}
If $f \in \mathcal S_{p}^{m}(\alpha, \beta, \mu;A, B)$, then
\begin{equation}\label{2.3.1}
\dfrac{\Theta_p^{m+1}(\alpha,\beta)f(z)}{\Theta_p^{m}(\alpha,\beta)f(z)}
\prec \dfrac{\mu\beta}{(\alpha+p\beta)Q(z)}=q(z) \prec
\dfrac{1+Az}{1+Bz}\quad (z \in \mathbb U),
\end{equation}
where
\begin{equation}\label{2.3.2}
Q(z)=
\begin{cases}\displaystyle \int_0^1 t^{\frac{\alpha+p\beta}{\mu
\beta}-1}\left(\dfrac{1+Btz}{1+Bz}\right)^{\frac{(\alpha+p\beta)(A-B)}{\mu
\beta\,B}}\,dt, & B \ne 0\\
\displaystyle\int_0^1 t^{\frac{\alpha+p\beta}{\mu
\beta}-1}\exp\left(\frac{(\alpha+p\beta)}{\mu \beta}A(t-1)z\right)dt, &
B=0
\end{cases}
\end{equation}
and $q$ is the best dominant of \eqref{2.3.1}. Furthermore, if
\begin{equation*}
A \leq -\dfrac{\mu \beta}{\alpha+p\beta}B \quad\text{with}\quad -1
\leq B < 0,
\end{equation*}
then
\begin{equation}\label{2.3.3}
\mathcal S_{p}^{m}(\alpha, \beta, \mu, A, B) \subset \mathcal
S_{p}^{m}(\alpha, \beta, \rho),
\end{equation}
where
\begin{equation*}
\rho=\left\{_2F_1\left(1,\dfrac{\alpha+p\beta}{\mu
\beta}\left(\dfrac{B-A}{B}\right);\dfrac{\alpha+p\beta}{\mu
\beta}+1;\dfrac{B}{B-1}\right)\right\}^{-1}.
\end{equation*}
The result is the best possible.
\end{theorem}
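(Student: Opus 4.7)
The plan is to reduce the defining subordination to a Briot--Bouquet form in the single ratio $\phi(z):=\Theta_p^{m+1}(\alpha,\beta)f(z)/\Theta_p^m(\alpha,\beta)f(z)$, apply Lemma~\ref{lem3} to obtain the best dominant $q$, and then combine Lemma~\ref{lem4} with the hypergeometric identities in Lemma~\ref{lem5} to produce and evaluate the sharp lower bound.

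Since $f\in\mathcal A_p$, each image $\Theta_p^{m+j}(\alpha,\beta)f$ has leading term $z^p$, so $\phi$ is analytic in $\mathbb U$ with $\phi(0)=1$. Applying \eqref{eq2:eq6} (with $n=1$) at the indices $m$ and $m+1$ and then logarithmically differentiating the definition of $\phi$ give
$$
\frac{\Theta_p^{m+2}(\alpha,\beta)f(z)}{\Theta_p^{m+1}(\alpha,\beta)f(z)} = \phi(z) + \frac{\beta}{\alpha+p\beta}\,\frac{z\phi'(z)}{\phi(z)}.
$$
Substituting this into \eqref{eq2:eq7} collapses the defining subordination to
$$
\phi(z) + \frac{z\phi'(z)}{\beta^{*}\phi(z)} \prec \frac{1+Az}{1+Bz}, \qquad \beta^{*}:=\frac{\alpha+p\beta}{\mu\beta}.
$$
This is precisely \eqref{eq2.2:eq5} of Lemma~\ref{lem3} with $\gamma^{*}=0$; the compatibility condition on $\gamma^{*}$ holds trivially as its right-hand side is non-positive. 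The lemma then yields $\phi\prec q\prec(1+Az)/(1+Bz)$ with $q$ the best dominant, and the substitution $t\mapsto tz$ in the integral of \eqref{eq2.2:eq4} rewrites $q$ as $\mu\beta/[(\alpha+p\beta)Q(z)]$ with $Q$ as in \eqref{2.3.2}, establishing \eqref{2.3.1}.

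For the inclusion \eqref{2.3.3}, because $\phi\prec q$ it suffices to prove $\inf_{z\in\mathbb U}\text{Re}\,q(z)=q(-1)$ and to identify $q(-1)$ with $\rho$. Writing
$$
Q(z)=\int_0^1 h(z,t)\,d\nu(t), \quad h(z,t)=\left(\frac{1+Btz}{1+Bz}\right)^{\beta^{*}(A-B)/B}, \quad d\nu(t)=t^{\beta^{*}-1}\,dt,
$$
I will apply Lemma~\ref{lem4} to $\mathcal H=Q$. The hypotheses $-1\le B<0$ and $A\le-\mu\beta B/(\alpha+p\beta)$ are exactly what is needed to guarantee that the M\"{o}bius factor $(1+Btz)/(1+Bz)$ maps $\mathbb U$ into a sector on which raising to the real exponent $\beta^{*}(A-B)/B$ keeps $h(z,t)$ in the right half-plane, makes $h(-r,t)$ real, and delivers the decisive inequality $\text{Re}\{1/h(z,t)\}\ge 1/h(-r,t)$. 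Lemma~\ref{lem4} then gives $\text{Re}\{1/Q(z)\}\ge 1/Q(-r)$ for $|z|\le r<1$; letting $r\to 1^-$ yields $\text{Re}\,q(z)\ge q(-1)$.

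To identify $q(-1)$ with the stated $\rho$, I will factor $(1-B)^{-\beta^{*}(A-B)/B}$ out of $Q(-1)$, apply the Euler representation \eqref{eq2.2:eq7} with $b=\beta^{*}$, $c=\beta^{*}+1$, $a=\beta^{*}(B-A)/B$, and then the transformation \eqref{eq2.2:eq8} followed by the symmetry \eqref{eq2.2:eq9}; the prefactor cancels and the stated expression for $\rho$ drops out. Sharpness of \eqref{2.3.3} is then inherited from the best-dominant property of $q$ together with the sharpness of Lemma~\ref{lem4} at $z=-r$. The main obstacle is the verification of the positivity/real-part hypotheses of Lemma~\ref{lem4}, since the exponent $\beta^{*}(A-B)/B$ is generally unbounded; it is precisely the restriction $A\le -\mu\beta B/(\alpha+p\beta)$ with $B<0$ that controls this step.
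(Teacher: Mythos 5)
Your reduction to the Briot--Bouquet form $\phi+z\phi'/(\beta^{*}\phi)\prec(1+Az)/(1+Bz)$ with $\beta^{*}=(\alpha+p\beta)/(\mu\beta)$, the application of Lemma~\ref{lem3} with $\gamma^{*}=0$, and the change of variables giving $q=\mu\beta/[(\alpha+p\beta)Q]$ all agree with the paper. One secondary gap: the observation that each $\Theta_p^{m+j}(\alpha,\beta)f$ has leading term $z^p$ only makes $\phi$ analytic \emph{near the origin}; the denominator could vanish elsewhere in $\mathbb U$. The paper handles this by introducing $g(z)=z\bigl(\Theta_p^{m}(\alpha,\beta)f(z)/z^p\bigr)^{\beta/(\alpha+p\beta)}$, working on $|z|<r_1=\sup\{r: g(z)\ne 0,\ 0<|z|\le r\}$, and then using $\mathrm{Re}\,\phi>0$ (from the subordination already established there) to conclude that $g$ is starlike, hence non-vanishing, hence $r_1=1$. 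You need some version of this argument.

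The decisive gap is in the second half: applying Lemma~\ref{lem4} directly to the raw integrand $h(z,t)=\bigl((1+Btz)/(1+Bz)\bigr)^{\beta^{*}(A-B)/B}$ does not work. That exponent equals $-a$ with $a=\beta^{*}(B-A)/B$, and the hypothesis $A\le-\mu\beta B/(\alpha+p\beta)$ only forces $0<a<\beta^{*}+1$, so $a$ can be arbitrarily large (e.g.\ $B=-1$, $A=0$, $\beta^{*}=10$ gives $a=10$ while satisfying the hypothesis). For $t$ near $0$ and $B=-1$ the factor $1/(1+Bz)=1/(1-z)$ already sweeps arguments over $(-\pi/2,\pi/2)$, so $h(z,t)$ has argument ranging over $(-a\pi/2,a\pi/2)$ and leaves the right half-plane as soon as $a>1$; the hypotheses $\mathrm{Re}\,h>0$ and $\mathrm{Re}\{1/h\}\ge 1/h(-r)$ of Lemma~\ref{lem4} are simply false for this kernel. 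The missing idea --- and the heart of the paper's argument --- is to first convert $Q$ via \eqref{eq2.2:eq7}--\eqref{eq2.2:eq10} into $Q(z)=(1+Bz)^{a}\int_0^1 t^{b-1}(1+Btz)^{-a}\,dt=\frac{\mu\beta}{\alpha+p\beta}\,{}_2F_1\bigl(1,a;c;Bz/(1+Bz)\bigr)$ with $b=\beta^{*}$, $c=\beta^{*}+1$, and then re-expand this hypergeometric function by the Euler integral \eqref{eq2.2:eq7} in the parameter $a$ --- legitimate precisely because the hypothesis gives $c>a>0$ --- to obtain $Q(z)=\int_0^1\frac{1+Bz}{1+B(1-t)z}\,d\nu(t)$ with $d\nu$ a positive measure. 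This kernel is a Möbius function (exponent $1$), for which the hypotheses of Lemma~\ref{lem4} are easily verified; that is where the restriction on $A$ and $B$ is actually consumed. Your plan invokes the Euler representation only afterwards, to evaluate $q(-1)$; it is needed \emph{before} Lemma~\ref{lem4} can be applied at all.
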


\begin{proof}\;
 Let $f \in \mathcal S_{p}^{m}(\alpha, \beta, \mu; A,B)$. Consider the function $g$ defined by
 \begin{equation}\label{2.3.4}
g(z)=z\left(\dfrac{\Theta_p^{m}(\alpha,\beta)f(z)}{z^p}\right)^{\frac{\beta}{\alpha+p\beta}}
\end{equation}
and let $r_1=\sup\{r: g(z) \ne 0, 0 < |z| \leq r < 1\}$. Then $g$ is
single-valued and analytic in $|z|< r_1.$ Taking logarithmic
differentiation in \eqref{2.3.4} and using the identity
\eqref{eq2:eq6} in the resulting equation, it follows that the
function $\phi$ given by
\begin{equation}\label{2.3.5}
\phi(z)=\dfrac{zg^{\prime}(z)}{g(z)}=\dfrac{\Theta_p^{m+1}(\alpha,\beta)f(z)}{\Theta_p^{m}(\alpha,\beta)f(z)}
\end{equation}
is analytic in $|z|< r_1$ and $\phi(0)=1.$ Carrying out logarithmic
differentiation in \eqref{2.3.5}, followed by the use of the
identity \eqref{eq2:eq6} and \eqref{eq2:eq7}, we deduce that
\begin{equation}\label{2.3.6}
\phi(z)+\dfrac{\mu \beta\,z
\phi^{\prime}(z)}{(\alpha+p\beta)\phi(z)} \prec
\dfrac{1+Az}{1+Bz}\quad (|z|< r_1).
\end{equation}
Hence, by using Lemma \ref{lem3}, we find that
\begin{equation}\label{2.3.7}
\phi(z) \prec \dfrac{\mu \beta}{(\alpha+p\beta)Q(z)}=q(z) \prec
\dfrac{1+Az}{1+Bz}\quad (|z|< r_1),
\end{equation}
where $q$ is the best dominant of \eqref{2.3.1} and is given by
\eqref{eq2.2:eq4} with $\beta^{*}=(\alpha+p\beta)/\mu \beta$ and
$\gamma^{*}=0$. For $-1 \leq B < A \leq 1$, it is easy to see that
\begin{equation*}
\text{Re}\left(\dfrac{1+Az}{1+Bz}\right) > 0\quad (z \in \mathbb U),
\end{equation*}
so that by \eqref{2.3.7}, we have
\begin{equation*}
\text{Re}\{\phi(z)\} > 0 \quad (|z|< r_1).
\end{equation*}
Now, \eqref{2.3.5} shows that the function $g$ is starlike
(univalent) in $|z|< r_1$. Thus, it is not possible that $g$
vanishes on $|z|=r_1$, if $r_1 < 1$. So, we conclude that $r_1=1$
and the function $\phi$ given by \eqref{2.3.5} is analytic in
$\mathbb U$. Hence, in view of \eqref{2.3.7}, we have
\begin{equation*}
\phi(z) \prec q(z) \prec \dfrac{1+Az}{1+Bz}\quad (z \in \mathbb U).
\end{equation*}
This proves the assertion \eqref{2.3.1}. To prove \eqref{2.3.3}, we
need to show that
\begin{equation}\label{2.3.8}
\inf_{z \in \mathbb U}\left\{\text{Re}(q(z))\right\}=q(-1).
\end{equation}
If we set
\begin{equation*}
a=\dfrac{\alpha+p\beta}{\mu \beta}\left(\dfrac{B-A}{B}\right),
b=\dfrac{\alpha+p\beta}{\mu
\beta}\;\;\text{and}\;\;c=\dfrac{\alpha+p\beta}{\mu \beta}+1,
\end{equation*}
then $c > b> 0$. From \eqref{2.3.2}, by using \eqref{eq2.2:eq7} to
\eqref{eq2.2:eq10}, we see that for $B \ne 0$,
\begin{align}\label{2.3.9}
Q(z)&=(1+Bz)^{a}\int_0^1 t^{b-1}(1+Btz)^{-a}\,dt\notag\\
 &
=\dfrac{\mu
\beta}{\alpha+p\beta}\,_2F_1\left(1,\dfrac{\alpha+p\beta}{\mu
\beta}\left(\dfrac{B-A}{B}\right);\dfrac{\alpha+p\beta}{\mu
\beta}+1;\dfrac{Bz}{1+Bz}\right).
\end{align}
To prove \eqref{2.3.8}, it suffices to show that
\begin{equation*}
\text{Re}\left(\dfrac{1}{Q(z)}\right) \geq \dfrac{1}{Q(-1)}\quad (z
\in \mathbb U).
\end{equation*}
Since
\begin{equation*}
A \leq -\dfrac{\mu \beta}{\alpha+p\beta}B\quad\text{with}\quad -1
\leq B < 0
\end{equation*}
implies that $c > a > 0$, by using \eqref{eq2.2:eq4}, we find from
\eqref{2.3.9} that
\begin{equation*}
Q(z)=\int_0^1 h(z,t)\,dt,
\end{equation*}
where
\begin{equation*}
h(z,t)=\dfrac{1+Bz}{1+B(1-t)z}\quad (0 \leq t \leq 1)\quad
\text{and}\quad
d\nu(t)=\dfrac{\Gamma(b)}{\Gamma(a)\Gamma(c-a)}t^{a-1}t^{c-a-1}\,dt
\end{equation*}
which is a positive measure on $[0,1]$. For $-1 \leq B < 0$, it may
be noted that $\text{Re}\{h(z,t)\} > 0$ and $h(-r,t)$ is real for $0
\leq |z| \leq r < 1$ and $t \in [0,1]$. Therefore, by using Lemma
\ref{lem4}, we obtain
\begin{equation*}
\text{Re}\left(\dfrac{1}{Q(z)}\right) \geq \dfrac{1}{Q(-r)}\quad
(|z|\leq r < 1)
\end{equation*}
which, upon letting $r \to 1^{-}$ yields
\begin{equation*}
\text{Re}\left(\dfrac{1}{Q(z)}\right) \geq \dfrac{1}{Q(-1)}.
\end{equation*}
Further, by taking $A \to
\left(-\mu\beta\,B/(\alpha+p\beta)\right)^{+}$ for the case
$A=-\mu\beta\,B/(\alpha+p\beta)$ and using \eqref{2.3.1}, we get
\eqref{2.3.3}.
 The result is the best possible as the function $q$ is the best
dominant of the subordination \eqref{2.3.1}. This evidently
completes the proof of Theorem \ref{theorem 2.3.1}.
\end{proof}

Letting $\mu=1$ in Theorem \ref{theorem 2.3.1} , we obtain the
following result which, in turn yields the corresponding work of
Patel et al. \cite[Corollary 1]{patel1} for $m=\alpha=0$.

\begin{corollary}\label{cor 2.3.1}
 \;If $A \leq -\beta B/(\alpha+p\beta)(-1 \leq B <
 0)$, then
 \begin{equation*}
\mathcal S_p^{m-1}(\alpha, \beta, A, B) \subset \mathcal
S_p^{m}(\alpha, \beta,\widetilde{\rho}),
\end{equation*}
where
\begin{equation*}
\widetilde{\rho}=\left\{_2F_1\left(1,\dfrac{\alpha+p\beta}{\beta}\left(\dfrac{B-A}{B}\right);\dfrac{\alpha+p\beta}{\beta}+1;\dfrac{B}{B-1}\right)\right\}^{-1}.
\end{equation*}
The result is the best possible.
\end{corollary}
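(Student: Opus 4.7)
The plan is to read the corollary off as a direct specialisation of Theorem~\ref{theorem 2.3.1} to $\mu = 1$, with the translation handled by the notational identifications set up in items~(ii) and~(v) of the introduction. No new analytic work should be required beyond parameter substitution.

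First I would unpack the hypothesis: by item~(ii), the class $\mathcal{S}_p^{m-1}(\alpha,\beta,A,B)$ coincides with $\mathcal{S}_{p,1}^{m-1}(\alpha,\beta,1,A,B)$, so membership there is precisely the defining subordination of Definition~\ref{def1} taken at $\mu = 1$ and with the running superscript shifted from $m$ to $m-1$. The standing condition $A \le -\mu\beta B/(\alpha+p\beta)$ with $-1 \le B < 0$ appearing in Theorem~\ref{theorem 2.3.1} collapses at $\mu = 1$ to the condition $A \le -\beta B/(\alpha+p\beta)$ stated in the corollary.

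Next I would invoke Theorem~\ref{theorem 2.3.1} at this parameter value. Substituting $\mu = 1$ in the formula for $\rho$ there replaces every occurrence of $(\alpha+p\beta)/(\mu\beta)$ by $(\alpha+p\beta)/\beta$, producing exactly the hypergeometric expression for $\widetilde{\rho}$ in the corollary. The real-part lower bound supplied by the theorem for the relevant consecutive-level ratio of $\Theta_p$-operators translates, via item~(v) of the introduction, into membership in $\mathcal{S}_p^m(\alpha,\beta,\widetilde{\rho})$; this yields the claimed inclusion. Sharpness is inherited directly from the sharpness clause of Theorem~\ref{theorem 2.3.1}, since $q$ is already identified there as the best dominant of the governing differential subordination.

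The only point demanding care—hence the only potential obstacle, though a routine one—is tracking the one-step index shift between the $\mu$-labelled class definition and the $\rho$-labelled target class as encoded in item~(v); all the substantive analytic work, including the Herglotz-type argument via Lemma~\ref{lem4} used to show $\inf_{z\in\mathbb{U}}\mathrm{Re}\,q(z) = q(-1)$, has already been carried out inside Theorem~\ref{theorem 2.3.1}.
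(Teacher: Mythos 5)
Your strategy---read the corollary off from Theorem~\ref{theorem 2.3.1} at $\mu=1$---is exactly the paper's (the paper offers nothing beyond the phrase ``letting $\mu=1$''), and you translate the hypothesis on $A,B$ and the constant $\widetilde{\rho}$ correctly. But the one step you defer as ``routine,'' namely the index bookkeeping between the four-argument class of item~(ii) and the three-argument class of item~(v), is precisely where the argument fails to close. Apply Theorem~\ref{theorem 2.3.1} with its superscript equal to $m-1$ and $\mu=1$: the hypothesis class is $\mathcal S_{p}^{m-1}(\alpha,\beta,1,A,B)=\mathcal S_{p}^{m-1}(\alpha,\beta,A,B)=\{f:\Theta_p^{m+1}(\alpha,\beta)f/\Theta_p^{m}(\alpha,\beta)f\prec(1+Az)/(1+Bz)\}$, and the conclusion the theorem delivers is $\text{Re}\bigl(\Theta_p^{m}(\alpha,\beta)f/\Theta_p^{m-1}(\alpha,\beta)f\bigr)>\widetilde{\rho}$, which under item~(v) is membership in $\mathcal S_p^{m-1}(\alpha,\beta,\widetilde{\rho})$, \emph{not} in $\mathcal S_p^{m}(\alpha,\beta,\widetilde{\rho})$ as you assert in your penultimate sentence.

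The discrepancy is not cosmetic. With the superscripts as printed, the corollary would claim that the single subordination $\Theta_p^{m+1}f/\Theta_p^{m}f\prec(1+Az)/(1+Bz)$ forces $\text{Re}\bigl(\Theta_p^{m+1}f/\Theta_p^{m}f\bigr)>\widetilde{\rho}$ for the \emph{same} ratio. Since every analytic $\phi$ with $\phi(0)=1$ and $\phi\prec(1+Az)/(1+Bz)$ is realizable as such a ratio for some $f\in\mathcal A_p$ (solve $z(\Theta_p^{m}f)'/\Theta_p^{m}f=\{(\alpha+p\beta)\phi-\alpha\}/\beta$ for $\Theta_p^{m}f$ and invert the operator), the sharp lower bound obtainable from that hypothesis alone is $(1-A)/(1-B)$, which $\widetilde{\rho}$ strictly exceeds (e.g.\ $A=0$, $B=-1/2$ gives $(1-A)/(1-B)=2/3<\widetilde{\rho}$); so no Briot--Bouquet argument can certify the target class as printed. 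The correct specialization is $\mathcal S_p^{m}(\alpha,\beta,A,B)\subset\mathcal S_p^{m}(\alpha,\beta,\widetilde{\rho})$, equivalently $\mathcal S_p^{m-1}(\alpha,\beta,A,B)\subset\mathcal S_p^{m-1}(\alpha,\beta,\widetilde{\rho})$: the subordination of the ratio at one level yields the sharpened real-part bound for the ratio one level down. You should either restate the corollary with matching superscripts or explicitly flag the off-by-one in the paper's statement; as written, your proof asserts a translation that item~(v) does not support.
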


Setting $m=-1,\; \mu=\beta=1$ and
$\widetilde{A}=\{(p+\alpha)A-\alpha B\}/p$ in Theorem \ref{theorem
2.3.1} , we deduce the following result obtained earlier by Patel et
al. \cite[Corollary 2]{patel1}.

\begin{corollary}\label{cor 2.3.2}
If $\alpha > -p, -1 \leq B < 0$ and
\begin{equation*}
\widetilde{A} \leq \min\left\{1+\dfrac{\alpha(1-B)}{p},
-\dfrac{(\alpha+1)B}{p}\right\}\leq 1,
\end{equation*}
then for $f \in \mathcal S_p^{\ast}(\widetilde{A}, B)$, we have
\begin{equation*}
\text{Re}\left(\dfrac{z^{\alpha}f(z)}{\int_0^1
t^{\alpha-1}f(t)dt}\right) >
(p+\alpha)\left\{_2F_1\left(1,p\left(\dfrac{B-\widetilde{A}}{B}\right);\alpha+p+1;\dfrac{B}{B-1}\right)\right\}^{-1}.
\end{equation*}
The result is the best possible.
\end{corollary}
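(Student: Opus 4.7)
The plan is to obtain Corollary \ref{cor 2.3.2} as a direct specialization of Theorem \ref{theorem 2.3.1} with the choices $m=-1$, $\mu=\beta=1$, and $A=(p\widetilde{A}+\alpha B)/(p+\alpha)$, which is equivalent to the formula $\widetilde{A}=\{(p+\alpha)A-\alpha B\}/p$ recorded in the statement. After these substitutions all that remains is to translate the hypothesis into membership in $\mathcal S_p^{\ast}(\widetilde{A},B)$ and the conclusion into a statement about the Bernardi-type integral $\int_0^{z}t^{\alpha-1}f(t)\,dt$.

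First I would verify that $f\in\mathcal S_p^{\ast}(\widetilde{A},B)$ coincides with $f\in\mathcal S_p^{-1}(\alpha,1,1,A,B)$. Applying the identity \eqref{eq2:eq6} with $\beta=1$ and $m=0$ yields $\Theta_p^{1}(\alpha,1)f(z)=(zf'(z)+\alpha f(z))/(\alpha+p)$, so that
\begin{equation*}
\dfrac{\Theta_p^{1}(\alpha,1)f(z)}{\Theta_p^{0}(\alpha,1)f(z)}=\dfrac{zf'(z)/f(z)+\alpha}{\alpha+p}.
\end{equation*}
Feeding the Janowski subordination $zf'(z)/f(z)\prec p(1+\widetilde{A}z)/(1+Bz)$ into this formula produces, after a short algebraic simplification, the subordination $\prec(1+Az)/(1+Bz)$ with $A$ as above; this is precisely the defining condition of $\mathcal S_p^{-1}(\alpha,1,1,A,B)$, since the $(1-\mu)$ summand in \eqref{eq2:eq7} vanishes when $\mu=1$. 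The same one-line substitution rewrites the two Theorem \ref{theorem 2.3.1} restrictions $A\leq 1$ and $A\leq -B/(\alpha+p)$ as the bounds $\widetilde{A}\leq 1+\alpha(1-B)/p$ and $\widetilde{A}\leq -(\alpha+1)B/p$ respectively, which together give the stated hypothesis.

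Finally, I would unpack the conclusion of Theorem \ref{theorem 2.3.1}, namely $\text{Re}\{\Theta_p^{0}(\alpha,1)f(z)/\Theta_p^{-1}(\alpha,1)f(z)\}>\rho$ for $z\in\mathbb U$. Since \eqref{eq2:eq4} gives $\Theta_p^{-1}(\alpha,1)f(z)=((\alpha+p)/z^{\alpha})\int_0^{z}t^{\alpha-1}f(t)\,dt$, this rearranges to the desired inequality $\text{Re}\{z^{\alpha}f(z)/\int_0^{z}t^{\alpha-1}f(t)\,dt\}>(\alpha+p)\rho$, and the identity $(\alpha+p)(B-A)/B=p(B-\widetilde{A})/B$ then recasts the hypergeometric factor in $\rho$ into the precise form appearing in the corollary. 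The only mildly delicate part of the argument is the bookkeeping: keeping the index shifts in \eqref{eq2:eq6} and \eqref{eq2:eq4}, the notational identification drawn from the list of special classes, and the two arithmetic translations of the threshold conditions on $A$ simultaneously consistent. No fresh analytic input beyond Theorem \ref{theorem 2.3.1} is required.
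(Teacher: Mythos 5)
Your proposal is correct and follows exactly the route the paper intends: the paper derives Corollary \ref{cor 2.3.2} by simply setting $m=-1$, $\mu=\beta=1$ and $\widetilde{A}=\{(p+\alpha)A-\alpha B\}/p$ in Theorem \ref{theorem 2.3.1}, and your computations (the identification of $\mathcal S_p^{\ast}(\widetilde{A},B)$ with $\mathcal S_p^{-1}(\alpha,1,1,A,B)$ via \eqref{eq2:eq6}, the translation of the threshold conditions, and the rewriting of the conclusion through $\Theta_p^{-1}(\alpha,1)f(z)=((\alpha+p)/z^{\alpha})\int_0^{z}t^{\alpha-1}f(t)\,dt$) correctly supply the bookkeeping the paper leaves implicit. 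You also rightly read the lower limit of the integral as $0$ to $z$ rather than the $0$ to $1$ appearing in the corollary's display, which is evidently a typographical slip in the statement.
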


Setting $\alpha=p+\ell-p\lambda$ and $\beta=\lambda$ in Theorem
\ref{theorem 2.3.1}, we get

\begin{corollary}\label{cor 2.3.3}
If $p>-\ell, -1 \leq B < 0, A \leq \mu \beta/(p+\ell)$ and $f \in
\mathcal S_p^m(\lambda, \ell, \mu,A,B)$, then
\begin{equation*}
\text{Re}\left(\dfrac{\mathcal J_p^{m+1}(\lambda,
\ell)f(z)}{\mathcal J_p^{m}(\lambda, \ell)f(z)}\right)>
\left\{_2F_1\left(1,\dfrac{(p+\ell)(B-A)}{\mu \lambda
B};\dfrac{p+\ell}{\mu \lambda}+1;\dfrac{B}{B-1}\right)\right\}^{-1}.
\end{equation*}
The result is the best possible.
\end{corollary}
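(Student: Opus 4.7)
The plan is to recognize Corollary~\ref{cor 2.3.3} as a direct specialization of Theorem~\ref{theorem 2.3.1}. I do not see any genuinely new content here; essentially every step is a parameter substitution, so the proof should be a short verification.

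First I would substitute $\alpha = p + \ell - p\lambda$ and $\beta = \lambda$ throughout Theorem~\ref{theorem 2.3.1}. The key computation is $\alpha + p\beta = p + \ell - p\lambda + p\lambda = p + \ell$, so the standing assumption $\alpha + p\beta > 0$ becomes $p + \ell > 0$, matching the hypothesis $p > -\ell$. The auxiliary ratio $(\alpha + p\beta)/(\mu\beta)$ collapses to $(p+\ell)/(\mu\lambda)$, which is precisely the shape of the parameters appearing in the hypergeometric expression in the statement.

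Next I would invoke identity~(iii) from the list following~\eqref{2.1.6}, namely $\Theta_p^m(\ell + p - p\lambda, \lambda)f(z) = \mathcal{J}_p^m(\lambda, \ell)f(z)$, to rewrite every occurrence of $\Theta_p^{m+i}(\alpha,\beta)f$ as $\mathcal{J}_p^{m+i}(\lambda,\ell)f$ for $i = 0, 1, 2$. Under this dictionary, the subordination condition~\eqref{eq2:eq7} defining $\mathcal{S}_p^m(\alpha, \beta, \mu; A, B)$ becomes exactly the condition defining $\mathcal{S}_p^m(\lambda, \ell, \mu, A, B)$ given in item~(i) of the notational list in Section~1. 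Consequently, $f \in \mathcal{S}_p^m(\lambda, \ell, \mu, A, B)$ qualifies $f$ for the hypothesis of Theorem~\ref{theorem 2.3.1}.

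The hypothesis $A \leq -\mu\lambda B/(p+\ell)$ with $-1 \leq B < 0$ (what the statement writes as $A \leq \mu\beta/(p+\ell)$, modulo an obvious typographical slip) is precisely $A \leq -\mu\beta B/(\alpha + p\beta)$, which is the side condition needed in Theorem~\ref{theorem 2.3.1} to conclude the inclusion~\eqref{2.3.3}. That inclusion states
\[
\text{Re}\left\{\dfrac{\Theta_p^{m+1}(\alpha,\beta)f(z)}{\Theta_p^{m}(\alpha,\beta)f(z)}\right\} > \rho,
\]
and substituting the parameter values together with identity~(iii) yields the desired lower bound on $\text{Re}\bigl(\mathcal{J}_p^{m+1}(\lambda,\ell)f(z)/\mathcal{J}_p^{m}(\lambda,\ell)f(z)\bigr)$ with $\rho$ having the exact hypergeometric form stated. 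Sharpness is inherited verbatim from the sharpness assertion of Theorem~\ref{theorem 2.3.1}, since the extremal function there, being defined through $\Theta_p^m$, transcribes without change. The only ``obstacle'' is keeping the signs and typos straight when reading off which parameter plays which role; no analytic difficulty arises.
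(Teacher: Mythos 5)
Your proposal is correct and matches the paper exactly: the paper derives Corollary~\ref{cor 2.3.3} precisely by setting $\alpha=p+\ell-p\lambda$ and $\beta=\lambda$ in Theorem~\ref{theorem 2.3.1}, so that $\alpha+p\beta=p+\ell$ and the inclusion \eqref{2.3.3} transcribes into the stated hypergeometric bound via the identification $\Theta_p^m(\ell+p-p\lambda,\lambda)=\mathcal{J}_p^m(\lambda,\ell)$. Your observation that the hypothesis should read $A\leq -\mu\lambda B/(p+\ell)$ rather than $A\leq \mu\beta/(p+\ell)$ is a correct catch of a typographical slip in the statement.
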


\begin{theorem}\label{theorem 2.3.2}
If $\kappa=(1-A)/(1-B)$ and $f \in \mathcal S_p^{m}(\alpha,
\beta;\kappa)$,
 then
 \begin{equation*}
f \in \mathcal S_p^{m}(\alpha, \beta,
\mu,1-2\kappa,-1)\;\;\text{for}\;\;|z| < R=R(p, \alpha, \beta, \mu,
\kappa),
\end{equation*}
 where
 \begin{align}\label{2.3.10}
R=\begin{cases}\dfrac{(\alpha+p\beta)(1-\kappa)+\mu
\beta-\sqrt{\{(\alpha+p\beta)\kappa-\mu\beta\}^2+2\mu
\beta(\alpha+p\beta)}}{(\alpha+p\beta)(1-2\kappa)},
& \kappa\ne \dfrac{1}{2}\\
\dfrac{\alpha+p\beta}{2\{(\alpha+p\beta)(1-\kappa)+\mu \beta\}}, &
 \kappa=\dfrac{1}{2}.
\end{cases}
\end{align}
The result is the best possible.
\end{theorem}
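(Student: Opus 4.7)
The plan is to encode the conclusion as a single scalar inequality $\text{Re}\,H(z)>\kappa$ for $H:=\phi+\lambda\,z\phi'/\phi$, with $\phi\in\mathcal P(\kappa)$ and $\lambda:=\mu\beta/(\alpha+p\beta)$, and then to reduce it via sharp Carath\'eodory bounds to a quadratic in $r=|z|$ whose smaller root is the stated $R$. Setting $\phi(z)=\Theta_p^{m+1}(\alpha,\beta)f(z)/\Theta_p^{m}(\alpha,\beta)f(z)$, the starlikeness argument of Theorem~\ref{theorem 2.3.1} ensures $\Theta_p^{m}(\alpha,\beta)f$ does not vanish on $\mathbb U$, so $\phi$ is analytic with $\phi(0)=1$, and the hypothesis $f\in\mathcal S_p^{m}(\alpha,\beta;\kappa)$ is precisely $\phi\in\mathcal P(\kappa)$. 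Applying the recurrence \eqref{eq2:eq6} at the two consecutive levels $m$ and $m+1$ and subtracting the resulting logarithmic derivatives yields
\[\frac{z\phi'(z)}{\phi(z)}=\frac{\alpha+p\beta}{\beta}\left(\frac{\Theta_p^{m+2}(\alpha,\beta)f(z)}{\Theta_p^{m+1}(\alpha,\beta)f(z)}-\phi(z)\right),\]
so the quantity whose real part must be controlled is
\[H(z)=(1-\mu)\frac{\Theta_p^{m+1}(\alpha,\beta)f(z)}{\Theta_p^{m}(\alpha,\beta)f(z)}+\mu\frac{\Theta_p^{m+2}(\alpha,\beta)f(z)}{\Theta_p^{m+1}(\alpha,\beta)f(z)}=\phi(z)+\lambda\,\frac{z\phi'(z)}{\phi(z)}.\]

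Writing $p:=(\phi-\kappa)/(1-\kappa)\in\mathcal P$ in the form $(1+w)/(1-w)$ with $w$ a Schwarz function, the classical Schwarz--Pick estimate $|p'|\le 2\,\text{Re}\,p/(1-r^{2})$ becomes $|\phi'(z)|\le 2(\text{Re}\,\phi(z)-\kappa)/(1-r^{2})$; since $\text{Re}\,\phi>\kappa\ge 0$ we also have $|\phi(z)|\ge\text{Re}\,\phi(z)$. Combining these two inequalities with Lemma~\ref{lem2} gives, at each $|z|=r<1$,
\[\text{Re}\,H(z)\ge u-\frac{2r\lambda(u-\kappa)}{(1-r^{2})u}=:F_{r}(u),\qquad u:=\text{Re}\,\phi(z)\ge u_{0}:=\frac{1-(1-2\kappa)r}{1+r}.\]
A short computation shows that $F_{r}$ has a unique interior critical point $u_{*}=\sqrt{2r\lambda\kappa/(1-r^{2})}$; using $u_{0}>\kappa$ for $r<1$ one checks $u_{*}<u_{0}$, so $F_{r}$ is strictly increasing on $[u_{0},\infty)$ and attains its minimum on this ray at the endpoint $u=u_{0}$.

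Using the identities $u_{0}-\kappa=(1-\kappa)(1-r)/(1+r)$ and $(1-r^{2})u_{0}=(1-r)(1-(1-2\kappa)r)$, the required inequality $F_{r}(u_{0})\ge\kappa$ simplifies, after dividing by the positive factor $(1-\kappa)$, to
\[(\alpha+p\beta)(1-2\kappa)\,r^{2}-2\bigl[(\alpha+p\beta)(1-\kappa)+\mu\beta\bigr]r+(\alpha+p\beta)\ge 0,\]
whose left-hand side equals $\alpha+p\beta>0$ at $r=0$. Its smaller positive root is precisely the $R$ displayed in \eqref{2.3.10}, with the linear degeneration at $\kappa=\tfrac{1}{2}$ handled separately. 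Sharpness follows from the extremal choice $\phi(z)=(1-(2\kappa-1)z)/(1-z)$ (i.e.\ $w(z)=z$): at $z=-R$ every Carath\'eodory inequality in the chain above is saturated and $\text{Re}\,H(-R)=\kappa$. The main obstacle is ensuring that the one-variable lower bound $F_{r}(u_{0})$ is indeed tight as a lower bound for $\text{Re}\,H(z)$ on $|z|=r$; this requires $u_{*}<u_{0}$ throughout $0<r\le R$, which reduces to the elementary inequality $u_{0}>\kappa$ valid on $(0,1)$.
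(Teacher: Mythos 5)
Your proof is correct and follows essentially the same route as the paper: write the ratio $\Theta_p^{m+1}f/\Theta_p^{m}f$ as an element of $\mathcal P(\kappa)$, use the recurrence \eqref{eq2:eq6} to express the defining quantity as $\phi+\lambda z\phi'/\phi$ with $\lambda=\mu\beta/(\alpha+p\beta)$, apply the MacGregor/Carath\'eodory estimates \eqref{2.3.13}, and reduce to the quadratic $(\alpha+p\beta)(1-2\kappa)r^{2}-2[(\alpha+p\beta)(1-\kappa)+\mu\beta]r+(\alpha+p\beta)\ge 0$, whose smaller positive root is $R$. Your extra critical-point analysis of $F_r$ is harmless but unnecessary (and the claim that $u_*<u_0$ follows from $u_0>\kappa$ alone is loose --- it really needs $r\le R$; more simply, $1-2\lambda r/((1-r^{2})u)$ is increasing in $u$, so $F_r(u)-\kappa\ge(u-\kappa)\bigl[1-2\lambda r/((1-r^{2})u_0)\bigr]$ directly, which is exactly the paper's inequality).
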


\begin{proof}
Since $f \in \mathcal S_p^{m}(\alpha, \beta,\kappa)$, we have
\begin{equation}\label{2.3.11}
\dfrac{\Theta_p^{m+1}(\alpha,\beta)f(z)}{\Theta_p^{m}(\alpha,\beta)f(z)}=\kappa+(1-\kappa)\phi(z)\;(z\in\mathbb
U)
\end{equation}
where $\phi$, given by \eqref{eq2.2:eq3} is analytic and has a
positive real part in $\mathbb U$. Taking logarithmic
differentiation in \eqref{2.3.11}, and using \eqref{eq2:eq6} in the
resulting equation followed by simplifications, we deduce that
\begin{align}\label{2.3.12}
\text{Re}&\left\{(1-\mu)\dfrac{\Theta_{p}^{m+1}(\alpha,\beta)f(z)}{\Theta_{p}^{m}(\alpha,\beta)f(z)}
+\mu\dfrac{\Theta_{p}^{m+2}(\alpha,\beta)f(z)}{\Theta_{p}^{m+1}(\alpha,\beta)f(z)}\right\}-\kappa\notag\\
&\geq (1-\kappa)\left\{\text{Re}(\phi(z))-\dfrac{\mu
\beta}{\alpha+p\beta}\dfrac{|z\phi^{\prime}(z)|}{|\kappa+(1-\kappa)\phi(z)|}\right\}.
\end{align}
Now, by using the well-known \cite{mag1} estimates
\begin{equation}\label{2.3.13}
\text{(i)}\; |z\phi^{\prime}(z)| \leq
\dfrac{2nr^n}{1-r^{2n}}\text{Re}(\phi(z))\quad \text{and}\quad
\text{(ii)}\;\text{Re}\{\phi(z)\} \geq \dfrac{1-r^n}{1+r^n}\quad
(|z|=r < 1)
\end{equation}
with $n=1$ in \eqref{2.3.12}, we obtain
\begin{align*}
\text{Re}&\left\{(1-\mu)\dfrac{\Theta_{p}^{m+1}(\alpha,\beta)f(z)}{\Theta_{p}^{m}(\alpha,\beta)f(z)}
+\mu\dfrac{\Theta_{p}^{m+2}(\alpha,\beta)f(z)}{\Theta_{p}^{m+1}(\alpha,\beta)f(z)}\right\}-\kappa\notag\\
&\geq (1-\kappa)\text{Re}\{\phi(z)\}\left[1-\dfrac{2\mu\beta
r}{(\alpha+p\beta)\left\{\kappa(1-r^2)+(1-\kappa)(1-r)^2\right\}}\right]
\end{align*}
which is certainly positive, if $r < R$, where $R$ is given by
\eqref{2.3.10}.

  It is easily seen that the bound $R$ is the best possible for the
function $f \in \mathcal A_p$ defined by

\begin{equation*}
\dfrac{\Theta_{p}^{m+1}(\alpha,\beta)f(z)}{\Theta_{p}^{m}(\alpha,\beta)f(z)}=\dfrac{1+(1-2\kappa)z}{1-z}\quad
\left(\kappa=\dfrac{1-A}{1-B}; z \in \mathbb U\right).
\end{equation*}
\end{proof}

  A special case of Theorem \ref{theorem 2.3.2} when $m=\alpha=0,\; A=1-(2\rho/p)$ and
$B=-1$ we have.

{\bf Corollary 2.3.4.}\;If $0 \leq \rho < p$ and $f \in \mathcal
S_p^{\ast}(\rho)$, then
\begin{equation*}
\text{Re}\left\{(1-\mu)\dfrac{zf^{\prime}(z)}{f(z)}+\mu\left(1+\dfrac{zf^{\prime\prime}(z)}{f^{\prime}(z)}\right)\right\}>
\rho\quad \text{for}\quad |z| < R(p,\mu,\rho),
\end{equation*}
where
\begin{equation*}
R(p,\mu,\rho)=\begin{cases}\dfrac{(p+\mu-\rho)-\sqrt{\rho^2+2\mu(p-\rho)+\mu^2}}{p-2\rho},
& \rho \ne \dfrac{p}{2}\\ \dfrac{p}{2(p+\mu-\rho)}, &
\rho=\dfrac{p}{2}.
\end{cases}
\end{equation*}
The result is the best possible.

  For a function $f \in \mathcal A_p(n)$, we define the integral
operator $\mathcal F_{\delta,p} : \mathcal A_p(n) \longrightarrow
\mathcal A_p(n)$  by

\begin{equation}\label{2.3.14}
 \mathcal F_{\delta,p}(f)(z)=\dfrac{\delta+p}{z^{\delta}}\int_0^z
t^{\delta-1}f(t)dt \quad (\delta > -p; z \in \mathbb U).
\end{equation}
If $f$ is defined by \eqref{eq2:eq1}, then
\begin{align}\label{2.3.15}
\mathcal F_{\delta,p}(f)(z)=& z^p+\sum_{k=n}^
{\infty}\dfrac{\delta+p}{\delta+p+k}a_{p+k}z^{p+k}\quad (z \in \mathbb U) \\
&=z^{p}\; _2F_1(1,\delta+p;\delta+p+1;z) \star f(z)
=\Theta_p^{-1}(n,\delta, 1)f(z)\notag\quad (z \in \mathbb U).
\end{align}

It follows from \eqref{eq2:eq5} and \eqref{2.3.15} that for $f \in
\mathcal A_p(n)$ and $\delta > -p$,

\begin{equation}\label{2.3.16}
z\left(\Theta_p^{m}(n;\alpha, \beta)\mathcal
F_{\delta,p}(f)(z)\right)^{\prime}=(\delta+p)\Theta_p^{m}(n;\alpha,
\beta)f(z)-\delta \Theta_p^{m}(n;\alpha, \beta)\mathcal
F_{\delta,p}(f)(z)\;(z \in \mathbb U).
\end{equation}

Now we have

\begin{theorem}\label{theorem 2.3.3}
 Let $\delta$ be a real number satisfying the condition
\begin{equation*}
\delta \geq \dfrac{\alpha(A-B)-p\beta(1-A)}{\beta(1-B)}.
\end{equation*}
(i)\;If $f \in \mathcal S_p^{m}(\alpha, \beta, A, B)$, then the
function $\mathcal F_{\delta,p}(f)$ given by \eqref{2.3.14} belongs
to the class $\mathcal S_p^{m}(\alpha, \beta, A, B)$. Furthermore,
\begin{equation*}
\dfrac{\Theta_p^{m+1}(\alpha, \beta)\mathcal
F_{\delta,p}(f)(z)}{\Theta_p^{m}(\alpha, \beta)\mathcal
F_{\delta,p}(f)(z)} \prec
\dfrac{1}{\alpha+p\beta}\left(\dfrac{\beta}{Q(z)}-(\delta\beta-\alpha)\right)=q(z)
\prec \dfrac{1+Az}{1+Bz} \quad (z \in \mathbb U),
\end{equation*}
where
\begin{equation}\label{2.3.17}
Q(z)=\begin{cases}\displaystyle \int_0^1
t^{\delta+p-1}\left(\dfrac{1+Btz}{1+Bz}\right)^{\frac{(\alpha+p\beta)(A-B)}{\beta\,B}}\,dt,
& B \ne 0 \\ \displaystyle\int_0^1
t^{\delta+p-1}\exp\left(\frac{(\alpha+p\beta)}{\beta}A(t-1)\right),
& B=0
\end{cases}
\end{equation}
and $q$ is the best dominant.\\
(ii)\;If $-1 \leq B < 0$ and
\begin{equation*}
\delta \geq
\max\left\{\dfrac{\alpha+p\beta}{\beta}\left(\dfrac{B-A}{B}\right)-p-1,
\dfrac{\alpha}{\beta}-\dfrac{(\alpha+p\beta)(1-A)}{\beta(1-B)}
\right\}.
\end{equation*}
 then
\begin{equation*}
f \in \mathcal S_p^{m}(\alpha, \beta, A, B)  \Longrightarrow
\mathcal F_{\delta,p}(f) \in \mathcal S_p^{m}(\alpha, \beta; \tau),
\end{equation*}
where
\begin{equation*}
\tau=\dfrac{1}{\alpha+p\beta}\left[\beta(\delta+p)
\left\{_2F_1\left(1,\dfrac{(\alpha+p\beta)(B-A)}{\beta\,B};\delta+p+1;\dfrac{B}{B-1}\right)\right\}^{-1}-(\delta\beta-\alpha)\right].
\end{equation*}
 The result is the best possible.
 \end{theorem}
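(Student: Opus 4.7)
The plan is to mimic the architecture of the proof of Theorem \ref{theorem 2.3.1}, with $\mathcal F_{\delta,p}(f)$ playing the role of the tested function and the extra integral operator absorbed using the identity \eqref{2.3.16}. Write $F=\mathcal F_{\delta,p}(f)$ and set
\begin{equation*}
\phi(z)=\dfrac{\Theta_p^{m+1}(\alpha,\beta)F(z)}{\Theta_p^{m}(\alpha,\beta)F(z)},
\end{equation*}
which, by the same non-vanishing argument used for $g$ in \eqref{2.3.4}--\eqref{2.3.5}, is analytic in $\mathbb U$ with $\phi(0)=1$; this preliminary step should be acknowledged but is routine.

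The heart of the proof is the derivation of a first order differential subordination for $\phi$. Applying \eqref{2.3.16} at level $m$ and eliminating $z[\Theta_p^m(\alpha,\beta)F]'(z)$ via \eqref{eq2:eq6} yields the algebraic identity
\begin{equation*}
\beta(\delta+p)\,\Theta_p^m(\alpha,\beta)f(z)=(\alpha+p\beta)\,\Theta_p^{m+1}(\alpha,\beta)F(z)+(\delta\beta-\alpha)\,\Theta_p^m(\alpha,\beta)F(z),
\end{equation*}
so that $\Theta_p^m(\alpha,\beta)f(z)/\Theta_p^m(\alpha,\beta)F(z)=\bigl[(\alpha+p\beta)\phi(z)+(\delta\beta-\alpha)\bigr]/[\beta(\delta+p)]$. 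I then take a logarithmic derivative of this last identity and re-use \eqref{eq2:eq6} (applied to both $f$ and $F$) to replace each $z[\Theta_p^m(\cdot)]'/\Theta_p^m(\cdot)$ by the corresponding ratio of consecutive $\Theta$-operators. After cancellation the $\alpha/\beta$ terms disappear and one obtains the clean relation
\begin{equation*}
\dfrac{\Theta_p^{m+1}(\alpha,\beta)f(z)}{\Theta_p^{m}(\alpha,\beta)f(z)}=\phi(z)+\dfrac{z\phi'(z)}{\beta^{*}\phi(z)+\gamma^{*}},\qquad \beta^{*}=\dfrac{\alpha+p\beta}{\beta},\ \ \gamma^{*}=\dfrac{\delta\beta-\alpha}{\beta}.
\end{equation*}
This identification, together with the hypothesis $f\in\mathcal S_p^{m}(\alpha,\beta,A,B)$, immediately places $\phi$ in the setting of Lemma \ref{lem3}; the constraint $\mathrm{Re}(\gamma^{*})\geq -\beta^{*}(1-A)/(1-B)$ unwinds exactly to the displayed lower bound on $\delta$. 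Lemma \ref{lem3} then delivers $\phi\prec q\prec (1+Az)/(1+Bz)$ with $q$ the best dominant; inserting $\beta^{*},\gamma^{*}$ into \eqref{eq2.2:eq4} and simplifying (substitute $t=sz$ in the integral in \eqref{eq2.2:eq4} and divide numerator and denominator by $(1+Bz)^{\beta^{*}(A-B)/B}$) produces precisely the formula for $q$ in terms of $Q(z)$ given in \eqref{2.3.17}. That completes part (i) and shows $\mathcal F_{\delta,p}(f)\in\mathcal S_p^{m}(\alpha,\beta,A,B)$.

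For part (ii) I follow the same hypergeometric bookkeeping as in Theorem \ref{theorem 2.3.1}. Using \eqref{eq2.2:eq7}--\eqref{eq2.2:eq10} with $a=(\alpha+p\beta)(B-A)/(\beta B)$, $b=\delta+p$, $c=\delta+p+1$, the integral $Q(z)$ rewrites as $\tfrac{1}{\delta+p}\,{}_2F_1\!\bigl(1,a;\,c;\,Bz/(1+Bz)\bigr)$, and in turn as $\int_0^1 h(z,t)\,d\nu(t)$ with
\begin{equation*}
h(z,t)=\dfrac{1+Bz}{1+B(1-t)z},\qquad d\nu(t)=\dfrac{\Gamma(c)}{\Gamma(a)\Gamma(c-a)}\,t^{a-1}(1-t)^{c-a-1}\,dt.
\end{equation*}
The conditions on $\delta$ in (ii) ensure $c>a>0$ (so $\nu$ is a positive measure) and that Lemma \ref{lem3} still applies, which allows Lemma \ref{lem4} to yield $\mathrm{Re}(1/Q(z))\geq 1/Q(-1)$, hence $\inf_{z\in\mathbb U}\mathrm{Re}\,q(z)=q(-1)$. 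Translating $q(-1)$ back through the formula of part (i) gives the stated value of $\tau$, and sharpness is inherited from the sharpness in Lemma \ref{lem3}.

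The main obstacle I foresee is the algebraic step in which the logarithmic derivative of the $\Theta^m(f)/\Theta^m(F)$ identity, combined twice with the recursion \eqref{eq2:eq6}, must be massaged into the exact Lemma \ref{lem3} form; miscounting signs or misplacing the factor of $\beta$ there would corrupt both $\beta^{*}$ and $\gamma^{*}$ and propagate into the hypergeometric identification of $Q$. Once that identity is checked, the remainder is a direct replay of the tools already used in Theorem \ref{theorem 2.3.1}.
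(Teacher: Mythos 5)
Your proposal is correct and follows essentially the same route as the paper: the same identity $\beta(\delta+p)\Theta_p^m(\alpha,\beta)f=(\alpha+p\beta)\Theta_p^{m+1}(\alpha,\beta)F+(\delta\beta-\alpha)\Theta_p^m(\alpha,\beta)F$ leading to the Briot--Bouquet subordination of Lemma \ref{lem3} with $\beta^{*}=(\alpha+p\beta)/\beta$, $\gamma^{*}=(\delta\beta-\alpha)/\beta$, followed by the same hypergeometric and Lemma \ref{lem4} bookkeeping for part (ii). The only difference is that you defer the analyticity of $\phi$ to the auxiliary-function argument of Theorem \ref{theorem 2.3.1}, which is exactly what the paper does via $g(z)=z\bigl(\Theta_p^{m}(\alpha,\beta)\mathcal F_{\delta,p}(f)(z)/z^p\bigr)^{\beta/(\alpha+p\beta)}$ and the radius $r_1$.
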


\begin{proof}\;Setting
\begin{equation}\label{2.3.18}
g(z)=z\left(\dfrac{\Theta_p^{m}(\alpha, \beta)\mathcal
F_{\delta,p}(f)(z)}{z^p}\right)^{\frac{\beta}{\alpha+p\beta}}
\end{equation}
and $r_1=\sup\{r : g(z) \ne 0, 0 < |z|\leq r < 1\}$, we see that $g$ is
single-valued and analytic in $|z|< r_1$. By taking the logarithmic
differentiation in \eqref{2.3.18} and using the identity
\eqref{2.3.16} for the function $\mathcal F_{\delta,p}(f)$, it
follows that

\begin{equation}\label{2.3.19}
\phi(z)=\dfrac{zg^{\prime}(z)}{g(z)}=\dfrac{\Theta_p^{m+1}(\alpha,
\beta)\mathcal F_{\delta,p}(f)(z)}{\Theta_p^{m}(\alpha,
\beta)\mathcal F_{\delta,p}(f)(z)}
\end{equation}

is analytic in $|z| < r_1$ and $\phi(0)=1$. Again, by making use of
the identity \eqref{eq2:eq6} and \eqref{2.3.16}, we deduce that
\begin{equation}\label{2.3.20}
\dfrac{\Theta_p^{m}(\alpha, \beta)(f)(z)}{\Theta_p^{m}(\alpha,
\beta)\mathcal
F_{\delta,p}(f)(z)}=\dfrac{(\alpha+p\beta)\phi(z)+(\delta
\beta-\alpha)}{\beta(\delta+p)}\quad (|z| < r_1).
\end{equation}
Since $f \in \mathcal S_p^{m}(\alpha, \beta, A, B)$, it is clear
that $\Theta_p^{m}(\alpha, \beta)(f)(z) \ne 0$ in $0 < |z| < 1$. So,
in view of \eqref{2.3.20}, we have
\begin{equation}\label{2.3.21}
\dfrac{\Theta_p^{m}(\alpha, \beta)\mathcal
F_{\delta,p}(f)(z)}{\Theta_p^{m}(\alpha,
\beta)(f)(z)}=\dfrac{\beta(\delta+p)}{(\alpha+p\beta)\phi(z)+(\delta
\beta-\alpha)}\quad (|z| < r_1).
\end{equation}
Now, by carrying out logarithmic differentiation in both sides of
\eqref{2.3.21} followed by the use of the identity \eqref{eq2:eq6},
\eqref{2.3.16} and \eqref{2.3.19} in the resulting equation, we
obtain
\begin{equation}\label{2.3.22}
\dfrac{\Theta_p^{m+1}(\alpha, \beta)(f)(z)}{\Theta_p^{m}(\alpha,
\beta)(f)(z)}=\phi(z)+\dfrac{z\phi^{\prime}(z)}{\left(\dfrac{\alpha+p\beta}{\beta}\right)\phi(z)+\left(\delta-\dfrac{\alpha}{\beta}\right)}
\prec \dfrac{1+Az}{1+Bz}\quad (|z| < r_1).
\end{equation}
Thus, by making use of  Lemma \ref{lem3}  with
$\beta^{*}=(\alpha+p\beta)/\beta$ and $\gamma^{*}=(\delta
\beta-\alpha)/\beta$ in \eqref{2.3.22}, we get
\begin{equation}\label{2.3.23}
\phi(z)=\dfrac{1}{\alpha+p\beta}\left(\dfrac{\beta}{Q(z)}-(\delta\beta-\alpha)\right)=q(z)
\prec \dfrac{1+Az}{1+Bz}\quad (|z| < r_1),
\end{equation}
where $Q$ is given by \eqref{2.3.17}, and $q$ is the best dominant.\\
  Since for $-1 \leq B < A \leq 1$,
\begin{equation*}
\text{Re}\left(\dfrac{1+Az}{1+Bz}\right) > 0 \quad (z \in \mathbb
U),
\end{equation*}
by \eqref{2.3.22}, we have $\text{Re}\{\phi(z)\} > 0$ in $|z| <
r_1$.  Now, in view of \eqref{2.3.19} the function $g$ is univalent
in $|z|< r_1$. Thus, it is not possible that the function $g$
vanishes on $|z|=r_1$, if $r_1 < 1$. So, we conclude that $r_1=1$
and the function $\phi$ is analytic in $\mathbb U$. From
\eqref{2.3.19} and \eqref{2.3.23}, we prove the assertion (i) of
Theorem \ref{theorem 2.3.3} .

Following the same technique as in the proof of Theorem \ref{theorem
2.3.1}, we can prove the assertion (ii) of Theorem \ref{theorem
2.3.3}. The result is the best possible as $q$ is the best dominant.
\end{proof}

\begin{remark}\label{remark 2.3.1}
If, in Theorem \ref{theorem 2.3.3} with $A=1-(2\eta/p)\,\;(0 \leq
\eta < p),\; B=-1$, we set $m=\alpha=0$ and $m=1,\; \alpha=0$, we
shall obtain the corresponding results by Patel et al. \cite[Remark
2]{patel2}.
\end{remark}

\section{Properties involving the operator $\Theta_p^{m}(\alpha,\;\beta)$}

In this section, we derive certain properties and characteristics of
functions in $\mathcal A_p$ involving operator
$\Theta_p^{m}(\alpha,\beta)f(z)$

\begin{theorem}\label{theorem 2.4.1}
 Let $0 < \mu < 1$, $0 < \gamma \leq 1$ and $A
\leq 1-\mu(1-B)$. If $f \in \mathcal A_p$ satisfies the following
subordination condition
\begin{equation}\label{2.4.1}
 (1-\mu)\left(\dfrac{\Theta_p^{m}(\alpha,\;\beta)f(z)}{z^p}\right)^{\frac{1}{\gamma}}+\mu\dfrac{\Theta_p^{m+1}(\alpha, \beta)f(z)}{\Theta_p^{m}(\alpha,\;
\beta)f(z)}\prec \dfrac{1+Az}{1+Bz}\; (z \in \mathbb U),
\end{equation}
then
\begin{equation}\label{2.4.2}
\left(\dfrac{\Theta_p^{m}(\alpha,\;\beta)f(z)}{z^p}\right)^{\frac{1}{\gamma}}
\prec \dfrac{\mu \beta
\gamma}{(1-\mu)(\alpha+p\beta)}\left(\dfrac{1}{Q(z)}\right)=q(z)
\prec \dfrac{1+\frac{A-\mu B}{1-\mu}z}{1+Bz}\;\; (z \in \mathbb U),
\end{equation}
where
\begin{equation*}
Q(z)=\begin{cases} \displaystyle\int_0^1
t^{\frac{(1-\mu)(\alpha+p\beta)}{\mu \beta
\gamma}-1}\left(\dfrac{1+Btz}{1+Bz}\right)^{\frac{\alpha+p\beta}{\mu\beta\gamma}\left(\frac{A-B}{B}\right)}\,dt,
& B\ne 0\\
\displaystyle\int_0^1 t^{\frac{(1-\mu)(\alpha+p\beta)}{\mu \beta
\gamma}-1}\exp\left(\frac{\alpha+p\beta}{\mu \beta
\gamma}A(t-1)\right)\,dt, & B =0
\end{cases}
\end{equation*}
and $q$ is the best dominant of \eqref{2.4.2}. Furthermore, if
\begin{equation*}
A \leq
\min\left\{1-\mu(1-B),-\mu\left(\dfrac{\beta\gamma}{\alpha+p\beta}-1\right)B\right\}\quad\text{with}\quad
-1 \leq B < 0,
\end{equation*}
then
\begin{equation*}
\text{Re}\left(\dfrac{\Theta_p^{m}(\alpha,\;\beta)f(z)}{z^p}\right)^{\frac{1}{\gamma}}>
\xi\quad (z \in \mathbb U),
\end{equation*}
where
\begin{equation*}
\xi=\left\{_2F_1\left(1,\frac{\alpha+p\beta}{\mu\beta\gamma}\left(\frac{B-A}{B}\right);\frac{(1-\mu)(\alpha+p\beta)}{\mu\beta\gamma}+1;
\frac{B}{B-1}\right)\right\}^{-1}.
\end{equation*}
 The result is the best possible.
\end{theorem}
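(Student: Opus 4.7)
The plan is to introduce the auxiliary function
\[
\phi(z)=\left(\dfrac{\Theta_p^{m}(\alpha,\beta)f(z)}{z^{p}}\right)^{1/\gamma},
\]
convert \eqref{2.4.1} into a first-order Briot--Bouquet-type differential subordination in $\phi$, apply Lemma \ref{lem3} to identify the best dominant, and---under the additional condition on $A$---extract the sharp estimate on $\mathrm{Re}\,\phi$ by rewriting $Q$ as a normalised ${}_{2}F_{1}$ and invoking Lemma \ref{lem4}, exactly as in the proof of Theorem \ref{theorem 2.3.1}.

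Since $\Theta_p^{m}(\alpha,\beta)f(z)/z^{p}=1+O(z)$, the principal branch of $\phi$ is analytic near the origin with $\phi(0)=1$. Logarithmic differentiation, combined with the operator identity \eqref{eq2:eq6}, gives
\[
\dfrac{\Theta_p^{m+1}(\alpha,\beta)f(z)}{\Theta_p^{m}(\alpha,\beta)f(z)}
=1+\dfrac{\beta\gamma}{\alpha+p\beta}\cdot\dfrac{z\phi'(z)}{\phi(z)}.
\]
Substituting in \eqref{2.4.1}, subtracting $\mu$, and dividing by $1-\mu>0$ reduces the hypothesis to
\[
\phi(z)+\dfrac{z\phi'(z)}{\beta^{*}\phi(z)}\prec\dfrac{1+Cz}{1+Bz},\qquad \beta^{*}=\dfrac{(1-\mu)(\alpha+p\beta)}{\mu\beta\gamma},\;C=\dfrac{A-\mu B}{1-\mu}.
\]
This matches \eqref{eq2.2:eq5} with $\gamma^{*}=0$ and with ``$A$'' played by $C$. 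The admissibility condition $\mathrm{Re}\,\gamma^{*}\ge -\beta^{*}(1-C)/(1-B)$ collapses to $C\le 1$, which unwinds precisely to the standing hypothesis $A\le 1-\mu(1-B)$. Lemma \ref{lem3} then delivers the best dominant; after the change of variable $t\mapsto tz$ in the integral from \eqref{eq2.2:eq4} and elementary simplification, this dominant is seen to coincide with the function $q(z)=\mu\beta\gamma/\bigl((1-\mu)(\alpha+p\beta)Q(z)\bigr)$ displayed in \eqref{2.4.2}. Finally, $\phi$ extends analytically from a neighbourhood of the origin to all of $\mathbb{U}$ by the continuation argument used in the proof of Theorem \ref{theorem 2.3.1}: since $q$ is subordinate to $(1+Cz)/(1+Bz)$, whose image lies in the right half-plane, $\phi$ cannot vanish where defined, so the branch propagates throughout $\mathbb{U}$.

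For the sharp bound, use \eqref{eq2.2:eq7}--\eqref{eq2.2:eq9} to rewrite
\[
Q(z)=\dfrac{1}{\beta^{*}}\;{}_{2}F_{1}\!\left(1,\,a;\,\beta^{*}+1;\,\dfrac{Bz}{1+Bz}\right),\qquad a=\dfrac{(\alpha+p\beta)(B-A)}{\mu\beta\gamma B}.
\]
The supplementary assumption $A\le -\mu\bigl(\tfrac{\beta\gamma}{\alpha+p\beta}-1\bigr)B$ with $-1\le B<0$ is exactly what forces $\beta^{*}+1>a>0$, so \eqref{eq2.2:eq7} applies and represents $Q(z)=\int_{0}^{1}h(z,t)\,d\nu(t)$ with $h(z,t)=(1+Bz)/(1+(1-t)Bz)$ and a positive measure $d\nu$ on $[0,1]$. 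For $-1\le B<0$, $h(\cdot,t)$ has positive real part on $\mathbb{U}$ and $h(-r,t)$ is real, so Lemma \ref{lem4} produces $\mathrm{Re}(1/Q(z))\ge 1/Q(-r)$; letting $r\to 1^{-}$ and recalling $\phi\prec 1/(\beta^{*}Q)$ yields $\mathrm{Re}\,\phi(z)\ge 1/(\beta^{*}Q(-1))=\xi$. The boundary case $A=-\mu\bigl(\tfrac{\beta\gamma}{\alpha+p\beta}-1\bigr)B$ is recovered by letting $A$ tend to that value, and sharpness is inherited from the fact that $q$ is the best dominant.

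The main obstacle is the algebraic bookkeeping: one must keep $\beta^{*}$ and $C$ in play throughout the reduction, verify that the admissibility condition of Lemma \ref{lem3} collapses exactly to $A\le 1-\mu(1-B)$ under the substitution $A\mapsto C$, and check that the secondary assumption on $A$ equivalently enforces $\beta^{*}+1>a>0$ in the hypergeometric representation of $Q$. Everything else closely mirrors the corresponding steps in the proof of Theorem \ref{theorem 2.3.1}.
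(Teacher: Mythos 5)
Your proof is correct and follows essentially the same route as the paper: both reduce \eqref{2.4.1} to a Briot--Bouquet differential subordination handled by Lemma \ref{lem3} and then obtain the sharp lower bound through the hypergeometric representation of $Q$ together with Lemma \ref{lem4}, exactly as in Theorem \ref{theorem 2.3.1}. The only (cosmetic) difference is that you normalise at the outset, applying Lemma \ref{lem3} to $\phi$ with $\gamma^{*}=0$ against the shifted target $\bigl(1+\tfrac{A-\mu B}{1-\mu}z\bigr)/(1+Bz)$, whereas the paper keeps the target $(1+Az)/(1+Bz)$ and applies Lemma \ref{lem3} to $\psi=\mu+(1-\mu)\phi$ with $\gamma^{*}=-(\alpha+p\beta)/\beta\gamma$; the two subordinations are affine translates of one another and yield the same best dominant.
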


 \begin {proof} Setting
\begin{equation}\label{2.4.3}
\phi(z)=\left(\dfrac{\Theta_p^{m}(\alpha,
\beta)f(z)}{z^p}\right)^{\frac{1}{\gamma}}\;\;(\gamma>0,\;z \in
\mathbb U),
\end{equation}
we note that the function $\phi$ of the form \eqref{eq2.2:eq3} and
it is analytic in $\mathbb U$. Taking logarithmic differentiation in
both sides of \eqref{2.4.3} and using \eqref{eq2:eq6} in the
resulting equation, we deduce that
\begin{equation}\label{2.4.4}
\psi(z)+\dfrac{z\psi^{\prime}(z)}{\beta^{*}\psi(z)+\gamma^{*}} \prec
\dfrac{1+Az}{1+Bz}\quad(z \in \mathbb U),
\end{equation}
where $\psi(z)=\mu+(1-\mu)\phi(z),\;
\beta^{*}=(\alpha+p\beta)/\mu\beta\gamma$ and
$\gamma^{*}=-(\alpha+p\beta)/\beta\gamma$. Applying Lemma \ref{lem3}
in \eqref{2.4.4} and following the lines of proof of Theorem
\ref{theorem 2.3.1}, we shall obtain the assertion of Theorem
\ref{theorem 2.4.1}
\end{proof}

Letting $m=1,\;\alpha=0,\;\beta=1,\; A=1-(2\eta/p)$ and $B=-1$ in
Theorem \ref{theorem 2.4.1}, we get

\begin{corollary}\label{cor 2.4.1} If $\max\left\{p\mu,\;
\dfrac{p+(p-1)\mu}{2}\right\}\leq \eta< p$ and $f \in \mathcal A_p$
satisfies
\begin{equation*}
\text{Re}\left\{(1-\mu)\dfrac{f^{\prime}(z)}{z^{p-1}}+\mu\left(1+\dfrac{zf^{\prime\prime}(z)}{f^{\prime}(z)}\right)\right\}
> \eta\quad (z \in \mathbb U),
\end{equation*}
then
\begin{equation*}
\text{Re}\left\{\dfrac{f^{\prime}(z)}{z^{p-1}}\right\}>p\left\{
_2F_1\left(1,\dfrac{2(p-\eta)}{\mu};\dfrac{p(1-\mu)}{\mu}+1;
\dfrac{1}{2}\right)\right\}^{-1}\quad (z \in \mathbb U).
\end{equation*}
The result is the best possible.
\end{corollary}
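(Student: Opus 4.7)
The plan is to specialize Theorem~\ref{theorem 2.4.1} to the parameter choices $m=1,\;\alpha=0,\;\beta=1,\;\gamma=1,\;A=1-(2\eta/p),\;B=-1$, and then to translate both the hypothesis and the conclusion of the theorem into the form stated in the corollary. Since the analytic heart of the matter---the differential-subordination machinery via Lemma~\ref{lem3}---has already been invested in Theorem~\ref{theorem 2.4.1}, what remains is essentially a bookkeeping exercise: parameter conversions, a reshaping of the dominant, and arithmetic simplification of the resulting hypergeometric bound.

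First I would use identity~\eqref{eq2:eq6}, which for $\alpha=0,\;\beta=1$ reduces to $z(\Theta_p^{m}(0,1)f)'(z)=p\,\Theta_p^{m+1}(0,1)f(z)$, to compute $\Theta_p^{1}(0,1)f(z)=zf'(z)/p$ and $\Theta_p^{2}(0,1)f(z)=(zf'(z)+z^{2}f''(z))/p^{2}$. Hence $\Theta_p^{1}(0,1)f(z)/z^{p}=f'(z)/(pz^{p-1})$ and $\Theta_p^{2}(0,1)f(z)/\Theta_p^{1}(0,1)f(z)=(1+zf''(z)/f'(z))/p$, so the left-hand side of~\eqref{2.4.1} becomes $(1/p)[(1-\mu)f'(z)/z^{p-1}+\mu(1+zf''(z)/f'(z))]$. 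This quantity takes the value $1$ at $z=0$, and since the bilinear map $(1+(1-2\eta/p)z)/(1-z)$ carries $\mathbb U$ conformally onto the half-plane $\{w:\mathrm{Re}\,w>\eta/p\}$, the subordination~\eqref{2.4.1} for this parameter choice is exactly equivalent to the real-part hypothesis of the corollary.

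Next I would verify the parametric restrictions of Theorem~\ref{theorem 2.4.1}. The first bound $A\le 1-\mu(1-B)$ collapses to $1-2\eta/p\le 1-2\mu$, i.e.\ $\eta\ge p\mu$; the second bound $A\le -\mu(\beta\gamma/(\alpha+p\beta)-1)B$ collapses to $1-2\eta/p\le\mu(1-p)/p$, which rearranges to $\eta\ge(p+(p-1)\mu)/2$. These are precisely the two inequalities packaged in the hypothesis $\max\{p\mu,(p+(p-1)\mu)/2\}\le\eta<p$, while the remaining restrictions $0<\mu<1$, $0<\gamma\le 1$, and $-1\le B<0$ are automatic.

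Finally, invoking the ``Furthermore'' part of Theorem~\ref{theorem 2.4.1} yields $\mathrm{Re}\{f'(z)/(pz^{p-1})\}>\xi$, and therefore $\mathrm{Re}\{f'(z)/z^{p-1}\}>p\xi$. Substituting the present parameter values into the formula for $\xi$ produces
\[
\xi^{-1}={}_2F_1\!\left(1,\,\frac{2(p-\eta)}{\mu};\,\frac{p(1-\mu)}{\mu}+1;\,\frac{1}{2}\right),
\]
which matches the bound asserted in the corollary, and the sharpness claim transfers verbatim from the best-dominant property of the subordination in Theorem~\ref{theorem 2.4.1}. The only genuine obstacle along the way is keeping careful track of the factor $1/p$ coming from $\Theta_p^{1}(0,1)f=zf'/p$ and of its reciprocal in the final bound, together with the matching renormalisation of the real-part threshold from $\eta$ to $\eta/p$; beyond this bookkeeping, no new ideas are required.
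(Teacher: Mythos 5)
Your proposal is correct and follows exactly the route the paper intends: the paper derives Corollary~\ref{cor 2.4.1} simply by setting $m=1$, $\alpha=0$, $\beta=1$ (and implicitly $\gamma=1$), $A=1-(2\eta/p)$, $B=-1$ in Theorem~\ref{theorem 2.4.1}, and your computations of $\Theta_p^{1}(0,1)f=zf'/p$, $\Theta_p^{2}(0,1)f=(zf'+z^{2}f'')/p^{2}$, the translation of the two parameter constraints into $\eta\ge p\mu$ and $\eta\ge(p+(p-1)\mu)/2$, and the evaluation of $\xi$ all check out. You have merely made explicit the bookkeeping the paper leaves to the reader.
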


Setting
\begin{equation*}
\left(\dfrac{\Theta_p^{m}(\alpha,
\beta)f(z)}{z^p}\right)^{\frac{1}{\gamma}}=\kappa+(1-\kappa)\phi(z)\quad
\left(0 < \gamma \leq 1, \kappa=\dfrac{1-A}{1-B};z \in \mathbb
U\right),
\end{equation*}
where $\phi$ is of the form \eqref{eq2.2:eq1}, using the estimates
\eqref{2.3.13}
 and following the lines of proof of Theorem \ref{theorem 2.3.2}, we obtain

 \begin{theorem}\label{theorem 2.4.2}
  Let $0 <\mu <1,\; 0 < \gamma \leq 1$ and $f
\in \mathcal A_p(n)$ satisfies the following subordination condition
\begin{equation*}
\dfrac{\Theta_p^{m}(n,\alpha,
 \beta)f(z)}{z^p} \prec
\left(\dfrac{1+Az}{1+Bz}\right)^{\gamma}\quad (z \in \mathbb U),
\end{equation*}
then
\begin{equation*}
\text{Re}\left\{(1-\mu)\left(\dfrac{\Theta_p^{m}(n;\alpha,
 \beta)f(z)}{z^p}\right)^{\frac{1}{\gamma}}+\mu
\dfrac{\Theta_p^{m+1}(n;\alpha, \beta)f(z)}{\Theta_p^{m}(n;\alpha,
\beta)f(z)}\right\}> \mu+(1-\mu)\kappa
\end{equation*}
for $|z|< R\equiv R(p,\;n,\;\mu,\;\alpha,\;\beta,\;\gamma,\;
\kappa),$ where $R$ is the smallest positive root of the equation
\begin{equation*}
(1-\mu)(\alpha+p\beta)(1-2\kappa)r^{2n}-2\{(1-\mu)(\alpha+p\beta)(1-\kappa)+n\mu\beta\gamma\}r^n+(1-\mu)(\alpha+p\beta)=0.
\end{equation*}
\end{theorem}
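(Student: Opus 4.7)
The plan is to follow the substitution suggested in the sentence just before the theorem: set
\[
\left(\dfrac{\Theta_p^{m}(n;\alpha,\beta)f(z)}{z^p}\right)^{1/\gamma}=\kappa+(1-\kappa)\phi(z)\qquad(z\in\mathbb U),
\]
and mimic the argument of Theorem \ref{theorem 2.3.2}, but carrying the exponent $\gamma$ and the general $n\in\mathbb N$ through the calculation. First I would verify that $\phi$ has the form \eqref{eq2.2:eq1} (because $f\in\mathcal A_p(n)$ forces $\Theta_p^{m}(n;\alpha,\beta)f(z)/z^p=1+O(z^n)$) and has positive real part in $\mathbb U$ (because the hypothesized subordination forces the left-hand side above to subordinate to $(1+Az)/(1+Bz)$, whose image lies in the half-plane $\{\mathrm{Re}\,w>\kappa\}$ with $\kappa=(1-A)/(1-B)$).

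Next I would logarithmically differentiate the defining equation for $\phi$ and plug the result into the identity \eqref{eq2:eq6} to obtain
\[
\dfrac{\Theta_p^{m+1}(n;\alpha,\beta)f(z)}{\Theta_p^{m}(n;\alpha,\beta)f(z)}=1+\dfrac{\beta\gamma(1-\kappa)\,z\phi'(z)}{(\alpha+p\beta)\{\kappa+(1-\kappa)\phi(z)\}}.
\]
A short algebraic rearrangement then expresses the quantity inside $\mathrm{Re}\{\cdot\}$ in the conclusion as
\[
\mu+(1-\mu)\kappa+(1-\kappa)\left\{(1-\mu)\phi(z)+\dfrac{\mu\beta\gamma\,z\phi'(z)}{(\alpha+p\beta)\{\kappa+(1-\kappa)\phi(z)\}}\right\},
\]
so the target inequality reduces to positivity of the real part of the brace.

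For the final step I would take real parts and invoke the estimates \eqref{2.3.13} in their general-$n$ form (rather than the $n=1$ version used for Theorem \ref{theorem 2.3.2}). Combining $|z\phi'(z)|\le\frac{2nr^n}{1-r^{2n}}\mathrm{Re}\,\phi(z)$ with the lower bound
\[
|\kappa+(1-\kappa)\phi(z)|\ge\kappa+(1-\kappa)\mathrm{Re}\,\phi(z)\ge\dfrac{\kappa(1-r^{2n})+(1-\kappa)(1-r^n)^2}{1-r^{2n}},
\]
the brace is bounded below by $(1-\kappa)\mathrm{Re}\,\phi(z)$ times a rational function of $r^n$. Using the identity $\kappa(1-r^{2n})+(1-\kappa)(1-r^n)^2=1-2(1-\kappa)r^n+(1-2\kappa)r^{2n}$ and clearing denominators, positivity of this factor is exactly the polynomial inequality in the statement, so its smallest positive root delivers $R$. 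Existence of such a root in $(0,1)$ is automatic from the intermediate value theorem, since the polynomial takes the value $(1-\mu)(\alpha+p\beta)>0$ at $r=0$ and $-2n\mu\beta\gamma<0$ at $r=1$. The main obstacle is purely clerical: one must arrange the two denominator estimates so that the $(1-r^{2n})$ factors cancel against the ones coming from estimate (i) of \eqref{2.3.13}, leaving a polynomial in $r^n$ matching the statement verbatim; sharpness, if desired, would be exhibited by the extremal choice $\phi(z)=(1-z^n)/(1+z^n)$.
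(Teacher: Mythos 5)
Your proposal is correct and is essentially the paper's own argument: the paper only sketches this proof via the substitution $\left(\Theta_p^{m}(n;\alpha,\beta)f(z)/z^p\right)^{1/\gamma}=\kappa+(1-\kappa)\phi(z)$, the estimates \eqref{2.3.13}, and a pointer to the proof of Theorem \ref{theorem 2.3.2}, and your computation fills in exactly those steps, correctly producing the identity $\kappa(1-r^{2n})+(1-\kappa)(1-r^n)^2=1-2(1-\kappa)r^n+(1-2\kappa)r^{2n}$ and hence the stated polynomial. The only cosmetic difference is your extremal choice $\phi(z)=(1-z^n)/(1+z^n)$ versus the paper's $(1+z^n)/(1-z^n)$, which yield the same bound.
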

The result is the best possible for the function $f \in \mathcal
A_p(n)$ defined by
\begin{equation*}
\left(\dfrac{\Theta_p^{m}(n;\alpha,\;\beta)f(z)}{z^p}\right)^{\frac{1}{n}}=\left(\dfrac{1+(1-2\kappa)z^n}{1-z^n}
\right)^{\gamma}\; (0 < \gamma \leq 1, \kappa=\dfrac{1-A}{1-B}; z
\in \mathbb U).
\end{equation*}
Next, we derive the following result.

\begin{theorem}\label{theorem 2.4.3}
If $\mu > 0$ and $f \in \mathcal A_p(n)$ satisfies the following
subordination condition:
\begin{equation}\label{2.4.5}
(1-\mu)\dfrac{\Theta_p^{m}(n;\alpha,
 \beta)f(z)}{z^p}+\mu\dfrac{\Theta_p^{m+1}(n;\alpha, \beta)f(z)}{z^p}
\prec \dfrac{1+Az}{1+Bz}\quad (z \in \mathbb U),
\end{equation}
then for $z \in \mathbb U$,
\begin{align}\label{2.4.6}
&\dfrac{\Theta_p^{m}(n;\alpha, \beta)f(z)}{z^p}\notag\\
& \prec
\begin{cases}
\dfrac{A}{B}+\left(1-\dfrac{A}{B}\right)(1+Bz)^{-1}\,_2F_1\left(1,1;\dfrac{\alpha+p\beta}{\mu\beta
n}+1;\dfrac{Bz}{1+Bz}\right), & B \ne 0\\
1+\dfrac{\alpha+p\beta}{\alpha+p\beta+\mu\beta n}Az, & B=0,
\end{cases}.
\end{align}
Further,
\begin{equation}\label{2.4.7}
\text{Re}\left\{\left(\dfrac{\Theta_p^{m}(n;\alpha,
 \beta)f(z)}{z^p}\right)^{1/t}\right\} > \sigma^{1/t}\quad (t \in
\mathbb N; z \in \mathbb U),
\end{equation}
where
\begin{equation*}
\sigma=\begin{cases}
\dfrac{A}{B}+\left(1-\dfrac{A}{B}\right)(1-B)^{-1}\,_2F_1\left(1,1;\dfrac{\alpha+p\beta}{\mu\beta
n}+1;\dfrac{B}{B-1}\right), & B \ne 0\\
1-\dfrac{\alpha+p\beta}{\alpha+p\beta+\mu\beta n}A, & B=0,
\end{cases}
\end{equation*}
The result is the best possible.
\end{theorem}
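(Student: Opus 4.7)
My plan is to convert the hypothesis (2.4.5) into a first‑order differential subordination to which Lemma \ref{lem1} applies, and then to identify the dominant explicitly using Lemma \ref{lem5} and Lemma \ref{lem4}.

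First I would set
\[
\phi(z)=\dfrac{\Theta_p^{m}(n;\alpha,\beta)f(z)}{z^p}
      =1+c_n z^n+c_{n+1}z^{n+1}+\cdots,
\]
which is analytic in $\mathbb U$. Writing $\Theta_p^{m}(n;\alpha,\beta)f(z)=z^p\phi(z)$ and using the identity \eqref{eq2:eq6}, a direct calculation gives
\[
\dfrac{\Theta_p^{m+1}(n;\alpha,\beta)f(z)}{z^p}=\phi(z)+\dfrac{\beta}{\alpha+p\beta}\,z\phi'(z).
\]
Substituting this into \eqref{2.4.5} collapses the left side neatly to
\[
\phi(z)+\dfrac{\mu\beta}{\alpha+p\beta}\,z\phi'(z)\prec \dfrac{1+Az}{1+Bz}.
\]
This is exactly the hypothesis of Lemma \ref{lem1} with $\gamma=(\alpha+p\beta)/(\mu\beta)$ (which has positive real part since $\alpha+p\beta,\mu,\beta>0$), so the lemma yields $\phi\prec\psi\prec(1+Az)/(1+Bz)$, where, writing $c=\gamma/n=(\alpha+p\beta)/(\mu\beta n)$,
\[
\psi(z)=c\,z^{-c}\int_0^{z}t^{\,c-1}\dfrac{1+At}{1+Bt}\,dt
       =c\int_0^{1}u^{\,c-1}\dfrac{1+Auz}{1+Buz}\,du
\]
after the substitution $t=uz$.

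Next I would evaluate $\psi$ in closed form. For $B\ne 0$ I split $\tfrac{1+Auz}{1+Buz}=\tfrac{A}{B}+\tfrac{1-A/B}{1+Buz}$ and apply Lemma \ref{lem5}: identity \eqref{eq2.2:eq7} (with $a=1$, $b=c$, $c+1$ in place of $c$) evaluates $c\int_0^1 u^{c-1}(1+Buz)^{-1}du={}_2F_1(1,c;c+1;-Bz)$, and Euler's transformation \eqref{eq2.2:eq8} rewrites this as $(1+Bz)^{-1}{}_2F_1(1,1;c+1;Bz/(1+Bz))$. This produces precisely the right‑hand side of \eqref{2.4.6} when $B\ne 0$. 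For $B=0$, direct integration gives $\psi(z)=1+\tfrac{c}{c+1}Az=1+\tfrac{\alpha+p\beta}{\alpha+p\beta+\mu\beta n}Az$, matching the stated formula.

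For the second assertion \eqref{2.4.7}, since $\phi\prec\psi$ it suffices to prove $\operatorname{Re}\psi(z)\ge \psi(-1)=\sigma$ for every $z\in\mathbb U$, and then to combine this with the elementary inequality $\operatorname{Re}(w^{1/t})\ge(\operatorname{Re}w)^{1/t}$, valid whenever $\operatorname{Re}w>0$ and $t\ge 1$ (which is seen by writing $w=re^{i\theta}$, $|\theta|<\pi/2$, and using $\cos(\theta/t)\ge(\cos\theta)^{1/t}$). The inequality $\operatorname{Re}\psi(z)\ge\sigma$ I would obtain exactly as in the proof of Theorem \ref{theorem 2.3.1}: rewrite
\[
\psi(z)-\dfrac{A}{B}=\dfrac{B-A}{B}\,c\!\int_0^1\dfrac{u^{c-1}}{1+Buz}\,du=\dfrac{B-A}{B}\int_0^1 h(z,u)\,d\nu(u),
\]
with $h(z,u)=1/(1+Buz)$ and $d\nu(u)=c\,u^{c-1}du$ a positive measure on $[0,1]$, verify the hypotheses $\operatorname{Re}h(z,u)>0$ and $\operatorname{Re}(1/h(z,u))\ge 1/h(-r,u)$ (which hold thanks to $|Bu|<1$), and apply Lemma \ref{lem4} followed by letting $r\to 1^-$; one handles the sign of $(B-A)/B$ by choosing the measure/integrand accordingly. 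The $B=0$ case is immediate from $\psi(z)=1+\tfrac{c}{c+1}Az$.

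The main obstacle will be Step 3: packaging $\psi$ into the integral form required by Lemma \ref{lem4} so that the positivity and monotonicity hypotheses hold simultaneously for every admissible sign pattern of $A$ and $B$, and verifying that the resulting $\sigma$ is positive so that $\sigma^{1/t}$ is meaningful. Everything else is a routine application of the operator identity \eqref{eq2:eq6}, Lemma \ref{lem1}, and the hypergeometric identities in Lemma \ref{lem5}.
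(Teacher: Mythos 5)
Your derivation of \eqref{2.4.6} is correct and essentially identical to the paper's: the same choice $\phi(z)=\Theta_p^{m}(n;\alpha,\beta)f(z)/z^{p}$, the identity \eqref{eq2:eq6} to rewrite the hypothesis as $\phi(z)+\frac{\mu\beta}{\alpha+p\beta}z\phi^{\prime}(z)\prec\frac{1+Az}{1+Bz}$, Lemma \ref{lem1} with $\gamma=(\alpha+p\beta)/(\mu\beta)$, and the identities \eqref{eq2.2:eq7}--\eqref{eq2.2:eq10} with $a=1$, $b=(\alpha+p\beta)/(\mu\beta n)$, $c=b+1$ to put the dominant in hypergeometric form.

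The gap is in your plan for \eqref{2.4.7}. Lemma \ref{lem4} concludes $\text{Re}\left(1/\mathcal H(z)\right)\geq 1/\mathcal H(-r)$, i.e.\ it bounds from below the real part of the \emph{reciprocal} of the integral $\mathcal H$. That is precisely what is needed in Theorem \ref{theorem 2.3.1}, where the best dominant is a constant multiple of $1/Q$; here the best dominant is $Q=\psi$ itself, so you need a lower bound for $\text{Re}\,\mathcal H$, not for $\text{Re}(1/\mathcal H)$, and Lemma \ref{lem4} does not deliver that. (Moreover, with $h(z,u)=1/(1+Buz)$ the hypothesis $\text{Re}\{1/h(z,u)\}\geq 1/h(-r,u)$ of the lemma fails when $B<0$.) The sign difficulty you flag as the ``main obstacle'' is a symptom of this mismatch. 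The paper sidesteps it entirely: it writes $Q(z)=\int_0^1\mathcal G(s,z)\,d\nu(s)$ with $\mathcal G(s,z)=\frac{1+Asz}{1+Bsz}$ and $d\nu(s)=\frac{\alpha+p\beta}{\mu\beta n}\,s^{\frac{\alpha+p\beta}{\mu\beta n}-1}ds$ a positive measure on $[0,1]$, invokes the elementary sharp estimate $\text{Re}\left(\frac{1+Aw}{1+Bw}\right)\geq\frac{1-A\rho}{1-B\rho}$ for $|w|\leq\rho<1$ (valid for all $-1\leq B<A\leq1$, hence for the pair $As,Bs$ with $0\leq s\leq1$), integrates against $d\nu$, and lets $r\to1^{-}$ to get $\text{Re}\,Q(z)\geq Q(-1)=\sigma$ uniformly in the signs of $A$ and $B$; this also yields $\sigma>0$, so that $\text{Re}(\omega^{1/t})\geq(\text{Re}\,\omega)^{1/t}$ applies. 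Replace your Lemma \ref{lem4} step by this direct pointwise estimate and your argument closes.
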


\begin{proof}
For $f \in \mathcal A_p(n)$, we write
\begin{equation}\label{2.4.8}
\phi(z)=\dfrac{\Theta_p^{m}(n;\alpha, \beta)f(z)}{z^p}\quad (f \in
\mathcal A_p(n); z \in \mathbb U).
\end{equation}
Then, $\phi$ is of the form \eqref{eq2.2:eq1} and it is analytic in
the unit disk $\mathbb U$. On differentiating both the sides of
\eqref{2.4.8}, using the identity \eqref{eq2:eq6} in the resulting
equation followed by the use of \eqref{2.4.5}, we get
\begin{equation}\label{2.4.9}
\phi(z)+\dfrac{z\phi^{\prime}(z)}{(\alpha+p\beta)/\mu\beta} \prec
\dfrac{1+Az}{1+Bz}\quad (z \in \mathbb U).
\end{equation}
Now, by an application of Lemma \ref{lem1} (with
$\gamma=(\alpha+p\beta)/\mu\beta$) in \eqref{2.4.9}, we obtain
\begin{equation*}
\dfrac{\Theta_p^{m}(n;\alpha, \beta)f(z)}{z^p} \prec
Q(z)=\dfrac{\alpha+p\beta}{\mu\beta\, n}
z^{-\frac{\alpha+p\beta}{\mu\beta\,n}}\int_0^z
t^{\frac{\alpha+p\beta}{\mu\beta\,
n}-1}\,\left(\dfrac{1+At}{1+Bt}\right)\,dt\quad (z \in \mathbb U)
\end{equation*}
which yields \eqref{2.4.6} by change of variables followed by the
use of the identities \eqref{eq2.2:eq7} to \eqref{eq2.2:eq10} (with
$a=1, b=(\alpha+p\beta)/\mu\beta n$ and $c=b+1$). This proves the
assertion \eqref{2.4.6} of Theorem \ref{theorem 2.4.3}.

  To prove \eqref{2.4.7}, it suffices to show that
\begin{equation}\label{2.4.10}
\inf_{z \in \mathbb U}\left\{\text{Re}(Q(z))\right\}=Q(-1).
\end{equation}
Indeed, for $|z| \leq r < 1$,
\begin{equation*}
\text{Re}\left(\dfrac{1+Az}{1+Bz}\right) \geq \dfrac{1-Ar}{1-Br}.
\end{equation*}
Setting
\begin{equation*}
\mathcal G(s,z)=\dfrac{1+Asz}{1+Bsz}\quad (0 \leq s \leq 1)\quad
\text{and}\quad d\nu(s)=\dfrac{\alpha+p\beta}{\mu\beta
n}s^{\frac{\alpha+p\beta}{\mu\beta n}-1}ds,
\end{equation*}
which is a positive measure on $[0,1]$, we get
\begin{equation*}
Q(z)=\int_0^1\mathcal G(s,z)d\nu(s),
\end{equation*}
so that
\begin{equation*}
\text{Re}\{Q(z)\} \geq \int_0^1
\dfrac{1-Asz}{1-Bsz}d\nu(s)=Q(-r)\quad (|z| \leq r <1).
\end{equation*}
Upon letting $r \to 1^{-}$ in the above inequality, we obtain the
assertion \eqref{2.4.9}. Now, with the aid of the elementary
inequality:
\begin{equation*}
\text{Re}\left(\omega^{1/t}\right) \geq
\left(\text{Re}(\omega)\right)^{1/t}\quad (\text{Re}(\omega) > 0; t
\in \mathbb N),
\end{equation*}
the estimate \eqref{2.4.7} follows from \eqref{2.4.10}.

  The estimate in \eqref{2.4.7} is the
best possible as the function $Q$ is the best dominant of
\eqref{2.4.6}.
\end{proof}

Putting $\alpha=p+\ell-p\lambda, \beta=\lambda$ and $t=1$ in Theorem
\ref{theorem 2.4.3}, we get the following result.

\begin{corollary}\label{cor 2.4.2}
If $f \in \mathcal A_p$ satisfies
\begin{equation*}
(1-\mu)\dfrac{\mathcal J_p^{m}(\lambda, \ell)f(z)}{z^p}+\mu
\dfrac{\mathcal J_p^{m+1}(\lambda, \ell)f(z)}{z^p} \prec
\dfrac{1+Az}{1+Bz}\quad (z \in \mathbb U),
\end{equation*}
then \begin{equation*} \text{Re}\left\{\dfrac{\mathcal
J_p^{m}(\lambda, \ell)f(z)}{z^p}\right\} > \varrho\quad (z \in
\mathbb U),
\end{equation*}
where
\begin{equation*}
\varrho=\begin{cases}
\dfrac{A}{B}+\left(1-\dfrac{A}{B}\right)(1-B)^{-1}\,_2F_1\left(1,1;\dfrac{p+\ell}{\mu\lambda
n}+1;\dfrac{B}{B-1}\right), & B \ne 0\\
1-\dfrac{p+\ell}{p+\ell+\mu\lambda n}A, & B=0,
\end{cases}
\end{equation*}
The result is the best possible.
\end{corollary}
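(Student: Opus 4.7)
The plan is to derive Corollary \ref{cor 2.4.2} as a direct specialization of Theorem \ref{theorem 2.4.3} under the parameter identifications that convert the general operator $\Theta_p^{m}(n;\alpha,\beta)$ into the operator $\mathcal J_p^{m}(\lambda,\ell)$.

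First, I would invoke item (iii) of the list following (\ref{2.1.6}), namely
\[ \Theta_p^{m}(\ell+p-p\lambda,\lambda)f(z) = \mathcal J_p^{m}(\lambda,\ell)f(z), \]
which is valid for $f \in \mathcal A_p$, $\ell > -p$, $\lambda > 0$, and $m \in \mathbb Z$. Since $\mathcal A_p = \mathcal A_p(1)$, the appropriate substitution in Theorem \ref{theorem 2.4.3} is $n = 1$, $\alpha = \ell+p-p\lambda$ and $\beta = \lambda$, which yields $\alpha+p\beta = \ell+p$. Under this identification the two quotients $\Theta_p^{m}(n;\alpha,\beta)f(z)/z^p$ and $\Theta_p^{m+1}(n;\alpha,\beta)f(z)/z^p$ turn into $\mathcal J_p^{m}(\lambda,\ell)f(z)/z^p$ and $\mathcal J_p^{m+1}(\lambda,\ell)f(z)/z^p$ respectively.

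Next, I would check that with these substitutions the subordination hypothesis (\ref{2.4.5}) of Theorem \ref{theorem 2.4.3} becomes exactly the subordination hypothesis of the corollary. Taking $t = 1$ in the conclusion (\ref{2.4.7}), I would then obtain
\[ \text{Re}\left\{\dfrac{\mathcal J_p^{m}(\lambda,\ell)f(z)}{z^p}\right\} > \sigma, \]
and a term-by-term substitution reduces $\sigma$, in both the branch $B \ne 0$ and the branch $B = 0$, to precisely the expression defining $\varrho$. The claim that the result is best possible transfers directly from Theorem \ref{theorem 2.4.3}, because the extremal function there may be rewritten through the operator identification above.

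I expect no substantive obstacle; the argument is essentially parameter bookkeeping. The only items requiring care are verifying that the third hypergeometric parameter $(\alpha+p\beta)/(\mu\beta n)+1$ in $\sigma$ becomes $(p+\ell)/(\mu\lambda n)+1$ after substitution, and that the multiplicative coefficient of $A$ in the $B = 0$ branch transforms analogously into $(p+\ell)/(p+\ell+\mu\lambda n)$.
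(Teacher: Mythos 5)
Your proposal is correct and coincides with the paper's own derivation, which obtains Corollary \ref{cor 2.4.2} precisely by putting $\alpha=p+\ell-p\lambda$, $\beta=\lambda$ and $t=1$ in Theorem \ref{theorem 2.4.3}, using $\alpha+p\beta=p+\ell$ via item (iii) after \eqref{2.1.6}. The only cosmetic point is that the corollary's statement takes $f\in\mathcal A_p$ yet retains $n$ in the formula for $\varrho$; your remark that $n=1$ is the consistent choice is a fair observation about the paper's own bookkeeping, not a gap in your argument.
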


Setting $m=-1,\; \alpha=\delta,\; \beta=1,\; A=1-2\eta\,\;(0 \leq
\eta < 1)$ and $B=-1$ in Theorem \ref{theorem 2.4.3}, we obtain

\begin{corollary}\label{cor 2.4.3}
If $f \in \mathcal A_p(n)$ satisfies
\begin{equation*}
(1-\mu)\dfrac{\mathcal F_{\delta, p}(f)(z)}{z^p}+\mu
\dfrac{f(z)}{z^p} \prec \dfrac{1+Az}{1+Bz}\quad (\mu > 0, \delta >
-p; z \in \mathbb U),
\end{equation*}
then
\begin{equation*}
\text{Re}\left(\dfrac{\mathcal F_{\delta, p}(f)(z)}{z^p}\right) >
\eta+(1-\eta)\,\left\{_2F_1\left(1,1;\dfrac{\delta+p}{\mu
n}+1;\dfrac{1}{2}\right)-1\right\}\quad (z \in \mathbb U).
\end{equation*}
The result is the best possible.
\end{corollary}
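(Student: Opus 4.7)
The plan is to derive this corollary as a direct specialization of Theorem~\ref{theorem 2.4.3}, so the work is essentially identifying parameters and collapsing the resulting bound.

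First I would invoke the identity $\Theta_p^{-1}(n;\delta,1)f(z)=\mathcal F_{\delta,p}(f)(z)$ recorded in \eqref{2.3.15}, together with the trivial fact $\Theta_p^{0}(n;\delta,1)f(z)=f(z)$. Substituting $m=-1$, $\alpha=\delta$, $\beta=1$, $A=1-2\eta$ and $B=-1$ into the hypothesis \eqref{2.4.5} of Theorem~\ref{theorem 2.4.3} converts it verbatim into
\[
(1-\mu)\dfrac{\mathcal F_{\delta,p}(f)(z)}{z^p}+\mu\dfrac{f(z)}{z^p}\prec\dfrac{1+(1-2\eta)z}{1-z},
\]
which is exactly the assumption of the corollary. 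Thus the hypotheses are matched.

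Next, taking $t=1$ in the conclusion \eqref{2.4.7} of Theorem~\ref{theorem 2.4.3} yields
\[
\mathrm{Re}\left\{\dfrac{\mathcal F_{\delta,p}(f)(z)}{z^p}\right\}>\sigma,
\]
where with the same substitutions
\[
\sigma=\dfrac{A}{B}+\left(1-\dfrac{A}{B}\right)(1-B)^{-1}\,{}_2F_1\!\left(1,1;\dfrac{\delta+p}{\mu n}+1;\dfrac{B}{B-1}\right).
\]
Plugging in $A=1-2\eta$, $B=-1$ gives $A/B=2\eta-1$, $1-A/B=2(1-\eta)$, $(1-B)^{-1}=1/2$ and $B/(B-1)=1/2$, so
\[
\sigma=(2\eta-1)+(1-\eta)\,{}_2F_1\!\left(1,1;\dfrac{\delta+p}{\mu n}+1;\dfrac{1}{2}\right).
\]

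Finally, a one-line algebraic rearrangement $\sigma=\eta+(1-\eta)\bigl[{}_2F_1(1,1;(\delta+p)/(\mu n)+1;1/2)-1\bigr]$ completes the identification with the stated bound, and the sharpness follows from the sharpness claim in Theorem~\ref{theorem 2.4.3}. The only ``obstacle'' is book-keeping in this last simplification of $\sigma$; there is no analytic content beyond what Theorem~\ref{theorem 2.4.3} already provides.
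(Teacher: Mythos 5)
Your proposal is correct and follows exactly the route the paper takes: the paper obtains Corollary~\ref{cor 2.4.3} by setting $m=-1$, $\alpha=\delta$, $\beta=1$, $A=1-2\eta$, $B=-1$ (and $t=1$) in Theorem~\ref{theorem 2.4.3}, and your parameter identification and the simplification $\sigma=2\eta-1+(1-\eta)\,{}_2F_1(1,1;(\delta+p)/(\mu n)+1;1/2)=\eta+(1-\eta)\bigl[{}_2F_1(\cdot)-1\bigr]$ check out.
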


\begin{theorem}\label{theorem 2.4.4}
If $\kappa=(1-A)/(1-B)$ and $f \in \mathcal A_p(n)$ satisfies the
subordination condition:
\begin{equation}\label{2.4.11}
\dfrac{\Theta_p^{m}(n;\alpha, \beta)f(z)}{z^p} \prec
\dfrac{1+Az}{1+Bz}\quad (z \in \mathbb U),
\end{equation}
then
\begin{align*}
\text{Re}\left\{(1-\mu)\dfrac{\Theta_p^{m}(n;\alpha,
\beta)f(z)}{z^p}+\mu\dfrac{\Theta_p^{m+1}(n;\alpha, \beta)f(z)}{z^p}
\right\} > \kappa &\\ \text{for}\; |z| < R=R(p, \alpha,\;
\beta,\;\mu,\; n),
\end{align*}
where
\begin{equation*}
R=\left[\dfrac{\sqrt{(\alpha+p\beta)^2+(\mu \beta n)^2}-\mu \beta
n}{\alpha+p\beta}\right]^{\frac{1}{n}}.
\end{equation*}
The result is the best possible.
\end{theorem}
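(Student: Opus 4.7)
The plan is to reduce the assertion to a standard Carath\'eodory-type estimate on functions with positive real part.

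First I would set $\phi(z) = \Theta_p^{m}(n;\alpha,\beta)f(z)/z^p$. Then $\phi$ is analytic in $\mathbb{U}$, $\phi(0)=1$, and the series expansion begins $\phi(z) = 1 + c_n z^n + c_{n+1}z^{n+1}+\cdots$. The subordination hypothesis \eqref{2.4.11} implies $\mathrm{Re}\,\phi(z) > \kappa$ in $\mathbb{U}$, so I can write $\phi(z) = \kappa + (1-\kappa)\omega(z)$, where $\omega$ is analytic in $\mathbb{U}$ with $\omega(0)=1$, $\omega(z)=1+d_n z^n+\cdots$, and $\mathrm{Re}\,\omega(z)>0$.

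Next I would translate everything through the identity \eqref{eq2:eq6}. A direct logarithmic differentiation of $\Theta_p^{m}(n;\alpha,\beta)f(z) = z^p \phi(z)$, combined with \eqref{eq2:eq6}, yields
\begin{equation*}
\dfrac{\Theta_p^{m+1}(n;\alpha,\beta)f(z)}{z^p} = \phi(z) + \dfrac{\beta\, z\phi'(z)}{\alpha+p\beta},
\end{equation*}
so the linear combination appearing in the theorem simplifies to
\begin{equation*}
(1-\mu)\dfrac{\Theta_p^{m}(n;\alpha,\beta)f(z)}{z^p} + \mu\dfrac{\Theta_p^{m+1}(n;\alpha,\beta)f(z)}{z^p} = \phi(z) + \dfrac{\mu\beta\, z\phi'(z)}{\alpha+p\beta}.
\end{equation*}
Substituting $\phi=\kappa+(1-\kappa)\omega$ reduces the target inequality to showing that $\mathrm{Re}\bigl(\omega(z)+\tfrac{\mu\beta\, z\omega'(z)}{\alpha+p\beta}\bigr)>0$ for $|z|<R$.

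At this point I would invoke the standard Carath\'eodory estimate \eqref{2.3.13}(i) for functions with positive real part whose expansion begins at order $n$:
\begin{equation*}
\mathrm{Re}\!\left(\omega(z) + \dfrac{\mu\beta\, z\omega'(z)}{\alpha+p\beta}\right) \geq \mathrm{Re}\,\omega(z)\left[1 - \dfrac{\mu\beta}{\alpha+p\beta}\cdot\dfrac{2n r^{n}}{1-r^{2n}}\right] \quad (|z|=r<1).
\end{equation*}
The bracket is non-negative precisely when, setting $s=r^{n}$, the quadratic $(\alpha+p\beta)s^{2}+2\mu\beta n\, s-(\alpha+p\beta)\le 0$ holds, whose positive root is
\begin{equation*}
s = \dfrac{\sqrt{(\alpha+p\beta)^{2}+(\mu\beta n)^{2}} - \mu\beta n}{\alpha+p\beta}.
\end{equation*}
Taking the $n$-th root recovers the radius $R$ in the statement.

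Sharpness I would verify by exhibiting the extremal function $f\in\mathcal{A}_p(n)$ defined by
\begin{equation*}
\dfrac{\Theta_p^{m}(n;\alpha,\beta)f(z)}{z^{p}} = \dfrac{1+(1-2\kappa)z^{n}}{1-z^{n}},
\end{equation*}
which corresponds to $\omega(z)=(1+z^{n})/(1-z^{n})$; at $z=-R^{1/1}$ (more precisely, at the real boundary point $z=-R$) one checks that both Carath\'eodory estimates used above are attained and the real part of the linear combination equals $\kappa$, showing that $R$ cannot be enlarged. The only genuinely non-routine step is getting the correct exponent of $r$ (namely $r^{n}$, not $r$) in the Carath\'eodory bound, since the expansion of $\phi$ skips the intermediate powers; everything else is algebraic manipulation and solving a quadratic.
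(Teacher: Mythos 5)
Your proposal is correct and follows essentially the same route as the paper: write $\Theta_p^{m}(n;\alpha,\beta)f(z)/z^p=\kappa+(1-\kappa)\omega(z)$ with $\mathrm{Re}\,\omega>0$, use the identity \eqref{eq2:eq6} to reduce the linear combination to $\phi+\mu\beta z\phi'/(\alpha+p\beta)$, apply estimate (i) of \eqref{2.3.13} with index $n$, and solve the resulting quadratic in $s=r^{n}$, with the same extremal function for sharpness. Your version is in fact slightly cleaner, since the paper's displayed intermediate inequality carries a spurious extra factor of $n$ that your computation correctly omits.
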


\begin{proof}
From \eqref{2.4.11}, we note that
\begin{equation*}
\text{Re}\left\{\dfrac{\Theta_p^{m}(n;\alpha,
\beta)f(z)}{z^p}\right\}
>\kappa \quad (z \in \mathbb U)
\end{equation*}
so that
\begin{equation}\label{2.4.12}
\dfrac{\Theta_p^{m}(n;\alpha,
\beta)f(z)}{z^p}=\kappa+(1-\kappa)\phi(z)\quad (z \in \mathbb U),
\end{equation}
where $\phi$, given by \eqref{eq2.2:eq1} is analytic and has a
positive real part in $\mathbb U$. Taking logarithmic
differentiation in both sides of  \eqref{2.4.12}, and using
\eqref{eq2:eq6} in the resulting equation, we deduce that
\begin{align*}
\text{Re}&\left\{(1-\mu)\dfrac{\Theta_p^{m}(n;\alpha,
 \beta)f(z)}{z^p}+\mu\dfrac{\Theta_p^{m+1}(n;\alpha, \beta)f(z)}{z^p}
\right\}-\kappa\\
& \geq (1-\kappa)\left\{\text{Re}(\phi(z))-\dfrac{\mu \beta
n}{\alpha+p\beta}|z\phi^{\prime}(z)|\right\}\quad (z \in \mathbb U).
\end{align*}
By using the estimate (i) of \eqref{2.3.13} in the above inequality
and following the lines of proof of Theorem \ref{theorem 2.3.2}, we
get the required assertion of Theorem \ref{theorem 2.4.4}

 It is easily seen that the bound $R$ is the best possible for the
function $f \in \mathcal A_p(n)$ defined by
\begin{equation*}
\dfrac{\Theta_p^{m}(n;\alpha,
 \beta)f(z)}{z^p}=\dfrac{1+(1-2\kappa)z^n}{1-z^n}\quad
\left(\kappa=\dfrac{1-A}{1-B}; z \in \mathbb U\right).
\end{equation*}
\end{proof}

  Putting $A=1-2\eta, B=-1, m=-1, \alpha=\delta$ and $\beta=1$ in
Theorem \ref{theorem 2.4.4}, we get

\begin{corollary}\label{cor 2.4.4}
If $\mu > 0, \delta > -p$ and $f \in \mathcal A_p(n)$ satisfies
\begin{equation*}
\text{Re}\left(\dfrac{\mathcal F_{\delta, p}(f)(z)}{z^p}\right) >
\eta \quad (0 \leq \eta < 1; z \in \mathbb U),
\end{equation*}
then
\begin{equation*}
(1-\mu)\dfrac{\mathcal F_{\delta, p}(f)(z)}{z^p}+\mu
\dfrac{f(z)}{z^p} > \eta \quad \text{for}\quad |z| <
R(p,\delta,\mu,n),
\end{equation*}
where
\begin{equation*}
R(p,\delta,\mu,n)=\left[\dfrac{\sqrt{(\delta+p)^2+(\mu n)^2}-\mu
n}{\delta+p}\right]^{\frac{1}{n}}.
\end{equation*}
The bound $\widetilde{R}(p,\delta,\mu,n)$ is the best possible for
the function $f \in \mathcal A_p(n)$ defined by
\begin{equation*}
\dfrac{\mathcal F_{\delta, p}(f)(z)}{z^p}=\dfrac{1+(1-2\eta)
z^n}{1-z^n}\quad (0 \leq \eta < 1, \delta > -p; z \in \mathbb U).
\end{equation*}
\end{corollary}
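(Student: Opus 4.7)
The plan is to obtain Corollary \ref{cor 2.4.4} as an immediate specialization of Theorem \ref{theorem 2.4.4} under the parameter choices $A=1-2\eta$, $B=-1$, $m=-1$, $\alpha=\delta$, $\beta=1$. Two identifications drive everything: first, $\Theta_p^{0}(n;\alpha,\beta)f = f$ for every admissible parameter set, read off from \eqref{eq2:eq5} since the multiplier $((\alpha+k\beta)/(\alpha+p\beta))^{0}$ collapses to $1$; second, with $\beta=1$ and $\alpha=\delta$, definition \eqref{eq2:eq5} reduces to
$$\Theta_p^{-1}(n;\delta,1)f(z) = z^p + \sum_{k=p+n}^{\infty}\dfrac{\delta+p}{\delta+k}\, a_k z^k,$$
which is exactly $\mathcal F_{\delta,p}(f)$ by \eqref{2.3.15}.

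Next, I would translate the hypothesis of the corollary into the subordination form demanded by \eqref{2.4.11}. The M\"obius map $w \mapsto (1+(1-2\eta)w)/(1-w)$ is a conformal bijection of $\mathbb U$ onto the half-plane $\{w : \operatorname{Re}(w) > \eta\}$ sending $0$ to $1$; since $\mathcal F_{\delta,p}(f)(z)/z^p$ is analytic in $\mathbb U$ with value $1$ at the origin, the inequality $\operatorname{Re}\{\mathcal F_{\delta,p}(f)(z)/z^p\} > \eta$ is equivalent to
$$\dfrac{\Theta_p^{-1}(n;\delta,1)f(z)}{z^p} \prec \dfrac{1+(1-2\eta)z}{1-z},$$
which is precisely \eqref{2.4.11} under the chosen substitutions.

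With the hypothesis matched, Theorem \ref{theorem 2.4.4} applies. The auxiliary quantity becomes $\kappa = (1-A)/(1-B) = \eta$; the two operator-quotients in the conclusion specialize to $\mathcal F_{\delta,p}(f)/z^p$ (from $\Theta_p^{-1}(n;\delta,1)f/z^p$) and $f/z^p$ (from $\Theta_p^{0}(n;\delta,1)f/z^p$). The radius specializes by direct substitution $\alpha+p\beta = \delta+p$ and $\mu\beta n = \mu n$, producing $R(p,\delta,\mu,n)$ exactly as stated. Sharpness is inherited through the same identifications: the extremal function displayed after the corollary, satisfying $\mathcal F_{\delta,p}(f)(z)/z^p = (1+(1-2\eta)z^n)/(1-z^n)$, is the image of the extremal for Theorem \ref{theorem 2.4.4} (for which $\Theta_p^{m}(n;\alpha,\beta)f(z)/z^p = (1+(1-2\kappa)z^n)/(1-z^n)$).

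I anticipate no real obstacle: the argument is a bookkeeping reduction. The only care-point is justifying the equivalence between the subordination by $(1+(1-2\eta)z)/(1-z)$ and the half-plane inequality $\operatorname{Re}(\cdot) > \eta$, which is standard because the dominant is univalent with the correct normalization $\phi(0)=1$ matching the analytic subordinate.
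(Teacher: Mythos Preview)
Your proposal is correct and follows precisely the route taken in the paper: the corollary is obtained from Theorem \ref{theorem 2.4.4} by the substitutions $A=1-2\eta$, $B=-1$, $m=-1$, $\alpha=\delta$, $\beta=1$, together with the identifications $\Theta_p^{-1}(n;\delta,1)f=\mathcal F_{\delta,p}(f)$ from \eqref{2.3.15} and $\Theta_p^{0}(n;\delta,1)f=f$ from \eqref{eq2:eq5}. Your additional justification of the equivalence between the half-plane condition and the subordination \eqref{2.4.11} is sound and simply makes explicit what the paper leaves implicit.
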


\begin{theorem}\label{theorem 2.4.5}
Let $\mu >0$ and $\delta > -p$. Suppose that $f \in \mathcal A_p(n)$
and $\mathcal F_{\delta,p}(f)$ is given by \eqref{2.3.14}. If
\begin{equation}\label{2.4.13}
(1-\mu)\dfrac{\Theta_p^m(n;\alpha,\beta)\mathcal F_{\delta,
p}(f)(z)}{z^p}+\mu \dfrac{\Theta_p^m(n;\alpha,\beta)f(z)}{z^p} \prec
\dfrac{1+Az}{1+Bz}\quad (z \in \mathbb U),
\end{equation}
then
\begin{equation*}
\text{Re}\left\{\left(\dfrac{\Theta_p^m(n;\alpha,\beta)\mathcal
F_{\delta, p}(f)(z)}{z^p}\right)^{1/t}\right\} > \xi^{1/t}\quad (t
\in \mathbb N; z \in \mathbb U),
\end{equation*}
where
\begin{equation*}
\xi=\begin{cases}
\dfrac{A}{B}+\left(1-\dfrac{A}{B}\right)(1-B)^{-1}\,_2F_1\left(1,1;\dfrac{\delta+p}{\mu
n}+1;\dfrac{B}{B-1}\right), & B \ne 0\\
1-\dfrac{\delta+p}{\delta+p+\mu n}A, & B=0,
\end{cases}
\end{equation*}
The result is the best possible.
\end{theorem}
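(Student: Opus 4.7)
The plan is to mirror the argument of Theorem \ref{theorem 2.4.3}, but with the crucial twist that the identity \eqref{2.3.16} for $\mathcal F_{\delta,p}$ is used to convert the hypothesis \eqref{2.4.13} into a first-order differential subordination on the ratio we actually care about. Specifically, I would set
\[
\phi(z)=\dfrac{\Theta_p^{m}(n;\alpha,\beta)\mathcal F_{\delta,p}(f)(z)}{z^{p}},
\]
which is analytic in $\mathbb U$ with $\phi(0)=1$ and expansion of the form \eqref{eq2.2:eq1}. Differentiating $\Theta_p^{m}(n;\alpha,\beta)\mathcal F_{\delta,p}(f)(z)=z^{p}\phi(z)$ and then eliminating the derivative of $\Theta_p^{m}(n;\alpha,\beta)\mathcal F_{\delta,p}(f)$ via \eqref{2.3.16}, one obtains the clean identity
\[
\dfrac{\Theta_p^{m}(n;\alpha,\beta)f(z)}{z^{p}}=\phi(z)+\dfrac{z\phi'(z)}{\delta+p}.
\]

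Substituting this into the hypothesis \eqref{2.4.13} collapses the left-hand side to
\[
\phi(z)+\dfrac{\mu}{\delta+p}\,z\phi'(z)\prec\dfrac{1+Az}{1+Bz}\quad(z\in\mathbb U).
\]
Then I would invoke Lemma \ref{lem1} with $\gamma=(\delta+p)/\mu$ (whose real part is positive because $\delta>-p$ and $\mu>0$), which yields $\phi(z)\prec Q(z)$ where
\[
Q(z)=\dfrac{\delta+p}{\mu n}\,z^{-(\delta+p)/(\mu n)}\int_{0}^{z}t^{(\delta+p)/(\mu n)-1}\dfrac{1+At}{1+Bt}\,dt.
\]
Changing variable $t\mapsto sz$ and applying the hypergeometric identities \eqref{eq2.2:eq7}--\eqref{eq2.2:eq10} (with $a=1$, $b=(\delta+p)/(\mu n)$, $c=b+1$) converts $Q$ into the closed form whose value at $z=-1$ is exactly the constant $\xi$ in the statement (and, when $B=0$, collapses to the elementary expression $1-\frac{\delta+p}{\delta+p+\mu n}A$).

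To pass from the subordination $\phi\prec Q$ to the sharp real-part estimate, I would show $\inf_{z\in\mathbb U}\operatorname{Re}Q(z)=Q(-1)$ by applying Lemma \ref{lem4}: write $Q(z)=\int_0^1 \mathcal G(s,z)\,d\nu(s)$ with $\mathcal G(s,z)=\tfrac{1+Asz}{1+Bsz}$ and the positive measure $d\nu(s)=\tfrac{\delta+p}{\mu n}s^{(\delta+p)/(\mu n)-1}ds$ on $[0,1]$; since $\operatorname{Re}\mathcal G(s,z)\geq\tfrac{1-Asr}{1-Bsr}=\mathcal G(s,-r)$ for $|z|\le r<1$, integration and letting $r\to 1^-$ gives $\operatorname{Re}Q(z)\geq Q(-1)=\xi$. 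Finally, the requested $1/t$-estimate follows from the elementary inequality $\operatorname{Re}(\omega^{1/t})\geq(\operatorname{Re}\omega)^{1/t}$, valid for $\operatorname{Re}\omega>0$ and $t\in\mathbb N$, applied to $\omega=\phi(z)$. Sharpness is automatic, since the extremal function realizing equality in Lemma \ref{lem1} makes $Q$ the best dominant.

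I expect no serious obstacle: the main work is the bookkeeping in step two (correctly using \eqref{2.3.16} on $\mathcal F_{\delta,p}(f)$ rather than on $f$ itself) and in the hypergeometric rewriting of $Q(-1)$; the Herglotz-type argument via Lemma \ref{lem4} is then completely parallel to the one in Theorem \ref{theorem 2.4.3}. The only point where one must be slightly careful is checking that $\operatorname{Re}\gamma=(\delta+p)/\mu\geq 0$, which is guaranteed by the standing hypotheses $\mu>0$ and $\delta>-p$, so Lemma \ref{lem1} applies.
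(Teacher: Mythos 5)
Your proposal is correct and follows essentially the same route as the paper: the same substitution $\phi(z)=\Theta_p^m(n;\alpha,\beta)\mathcal F_{\delta,p}(f)(z)/z^p$, the same use of \eqref{2.3.16} to reduce the hypothesis to $\phi(z)+\mu z\phi'(z)/(\delta+p)\prec(1+Az)/(1+Bz)$, and then the same Lemma~\ref{lem1} plus integral-representation argument that the paper invokes by reference to Theorem~\ref{theorem 2.4.3}. The only slip is cosmetic: the step $\inf_{z\in\mathbb U}\mathrm{Re}\,Q(z)=Q(-1)$ is obtained by the direct pointwise estimate $\mathrm{Re}\,\mathcal G(s,z)\ge\mathcal G(s,-r)$ integrated against the positive measure (as in Theorem~\ref{theorem 2.4.3}), not by Lemma~\ref{lem4}, which concerns $\mathrm{Re}(1/\mathcal H)$ rather than $\mathrm{Re}\,\mathcal H$.
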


\begin{proof}
If, we let
\begin{equation}\label{2.4.14}
\phi(z)=\dfrac{\Theta_p^m(n;\alpha, \beta)\mathcal F_{\delta,
p}(f)(z)}{z^p}\quad (z \in \mathbb U),
\end{equation}
then $\phi$ is of the form \eqref{eq2.2:eq1} and it is analytic in
$\mathbb U$. On differentiating both the sides of \eqref{2.4.14} and
using \eqref{2.3.16} in conjunction with \eqref{2.4.13},we deduce
that
\begin{align*}
\phi(z)+\dfrac{z\phi^{\prime}(z)}{(\delta+p)/\mu}&=(1-\mu)\dfrac{\Theta_p^m(n;\alpha,\beta)\mathcal
F_{\delta, p}(f)(z)}{z^p}+\mu \dfrac{\Theta_p^m(n;\alpha,
\beta)f(z)}{z^p}\\& \prec \dfrac{1+Az}{1+Bz}\quad (z \in \mathbb U).
\end{align*}
The remaining part of the proof of Theorem \ref{theorem 2.4.5} is
similar to that of Theorem \ref{theorem 2.4.3} and we omit the
details.
\end{proof}

Letting $A=1-2\eta,\; B=-1,\; m=-1,\; \alpha=\delta$ and
$\mu=\beta=t=1$ in Theorem \ref{theorem 2.4.4}, we obtain

\begin{corollary}\label{cor 2.4.5}
Suppose that $f \in \mathcal A_p(n)$ and $\mathcal F_{\delta,p}(f)$
is given by \eqref{2.3.14}. If
\begin{equation*}
\text{Re}\left\{\dfrac{\mathcal F_{\delta, p}(f)(z)}{z^p}\right\}>
\eta \quad (0 \leq \eta < 1, \delta > -p;z \in \mathbb U),
\end{equation*}
then
\begin{equation*}
\text{Re}\left(\dfrac{\displaystyle\int_0^z t^{\delta-1}\mathcal
F_{\delta, p}(f)(t)dt}{z^{\delta+p}}\right)
>\dfrac{\eta+(1-\eta)\,\left\{_2F_1\left(1,1;\dfrac{\delta+p}{\mu
n}+1;\dfrac{1}{2}\right)-1\right\}}{\delta+p}\quad (z \in \mathbb
U).
\end{equation*}
The result is the best possible.
\end{corollary}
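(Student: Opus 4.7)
The corollary is presented as a specialization of a preceding result. The text says ``Theorem \ref{theorem 2.4.4}'' but clearly means Theorem \ref{theorem 2.4.5}, since only the latter features the integral transform $\mathcal F_{\delta,p}$ that appears on both sides of the statement. So my plan is to verify that Corollary \ref{cor 2.4.5} follows directly by setting $A=1-2\eta$, $B=-1$, $m=-1$, $\alpha=\delta$, $\mu=\beta=t=1$ in Theorem \ref{theorem 2.4.5}. The key observation underlying everything is the identification
$$\Theta_p^{-1}(n;\delta,1)g(z)=\mathcal F_{\delta,p}(g)(z)\qquad (g\in\mathcal A_p(n)),$$
which is immediate from \eqref{eq2:eq4} (or, equivalently, \eqref{2.3.15}) and the definition \eqref{2.3.14} of $\mathcal F_{\delta,p}$.

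First, I would check that the hypothesis of Theorem \ref{theorem 2.4.5} reduces to the hypothesis of the corollary. With $\mu=1$ the convex combination on the left of \eqref{2.4.13} collapses to $\Theta_p^{-1}(n;\delta,1)f(z)/z^p=\mathcal F_{\delta,p}(f)(z)/z^p$, and the subordination $\prec (1+(1-2\eta)z)/(1-z)$ is well known to be equivalent to $\mathrm{Re}\{\mathcal F_{\delta,p}(f)(z)/z^p\}>\eta$. Second, I would translate the conclusion: Theorem \ref{theorem 2.4.5} with $t=1$ supplies
$$\mathrm{Re}\left\{\dfrac{\Theta_p^{-1}(n;\delta,1)\mathcal F_{\delta,p}(f)(z)}{z^p}\right\}>\xi,$$
and rewriting the numerator via the explicit integral representation \eqref{2.3.14} of $\mathcal F_{\delta,p}$ gives
$$\dfrac{\Theta_p^{-1}(n;\delta,1)\mathcal F_{\delta,p}(f)(z)}{z^p}=\dfrac{\delta+p}{z^{\delta+p}}\int_0^z t^{\delta-1}\mathcal F_{\delta,p}(f)(t)\,dt,$$
so dividing by $\delta+p$ produces exactly the left-hand side of the claimed estimate.

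Finally, I would simplify $\xi$ using the specialized parameters. With $B=-1$, $A=1-2\eta$, $\mu=1$, one computes $A/B=2\eta-1$, $(1-A/B)(1-B)^{-1}=1-\eta$, and $B/(B-1)=1/2$, so
$$\xi=(2\eta-1)+(1-\eta)\,{}_2F_1\!\left(1,1;\dfrac{\delta+p}{n}+1;\dfrac{1}{2}\right)=\eta+(1-\eta)\left\{{}_2F_1\!\left(1,1;\dfrac{\delta+p}{n}+1;\dfrac{1}{2}\right)-1\right\},$$
which coincides with the numerator in the stated bound (the ``$\mu n$'' printed in the corollary is a residual typo, since $\mu=1$ here). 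There is no genuine obstacle: the only steps requiring care are the double bookkeeping of the two successive integral transforms $\mathcal F_{\delta,p}\circ\mathcal F_{\delta,p}$ and the clean reduction of the hypergeometric constant from the general formula of Theorem \ref{theorem 2.4.5} to the form appearing here.
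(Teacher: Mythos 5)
Your proposal is correct and follows exactly the route the paper intends: the corollary is obtained by specializing Theorem \ref{theorem 2.4.5} (the citation of Theorem \ref{theorem 2.4.4} is indeed a misprint) with $A=1-2\eta$, $B=-1$, $m=-1$, $\alpha=\delta$, $\mu=\beta=t=1$, using $\Theta_p^{-1}(n;\delta,1)=\mathcal F_{\delta,p}$ and the identity $\xi=2\eta-1+(1-\eta)\,{}_2F_1=\eta+(1-\eta)\{{}_2F_1-1\}$. Your two side remarks --- that the left-hand side of the conclusion is $\mathcal F_{\delta,p}\circ\mathcal F_{\delta,p}$ divided by $\delta+p$, and that the ``$\mu n$'' in the printed constant is a residual typo since $\mu=1$ --- are both accurate.
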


\begin{theorem}\label{theorem 2.4.6}
Let $\mu > 0$ and $-1 \leq B_j < A_j \leq 1\,(j=1,2)$. If the
functions $f_j \in \mathcal A_p$ satisfy the subordination condition
\eqref{2.4.5} and
\begin{equation*}
\mathcal H(z)=\Theta_p^m(\alpha,\beta)(f_1 \star f_2)(z)\quad (z \in
\mathbb U),
\end{equation*}
then
\begin{equation}\label{2.4.15}
\text{Re}\left\{(1-\mu)\dfrac{\Theta_p^m(\alpha,\beta)\mathcal
H(z)}{z^p}+\mu\dfrac{\Theta_p^{m+1}(\alpha,\beta)\mathcal
H(z)}{z^p}\right\}> \eta \quad (0 \leq \eta < 1; z \in \mathbb U),
\end{equation}
where
\begin{equation*}
\eta=1-\dfrac{4(A_1-B_1)(A_2-B_2)}{(1-B_1)(1-B_2)}\left\{1-\dfrac{1}{2}\;
_2F_1\left(1,1;\dfrac{\alpha+p\beta}{\mu
\beta}+1;\dfrac{1}{2}\right)\right\}.
\end{equation*}
The result is the best possible when $B_1=B_2=-1.$
\end{theorem}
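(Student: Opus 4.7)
The plan is to apply Theorem \ref{theorem 2.4.3} to each $f_j$ and then recast the LHS of \eqref{2.4.15} as a single Hadamard product of a function subordinate to $(1+A_1z)/(1+B_1z)$ with a function in $\mathcal P(\sigma_2)$, so that a Herglotz-based convolution inequality finishes the proof. Applying Theorem \ref{theorem 2.4.3} with $n=1$ to each $f_j$, the hypothesis \eqref{2.4.5} gives $\phi_j(z):=\Theta_p^m(\alpha,\beta)f_j(z)/z^p\in\mathcal P(\sigma_j)$, where $\sigma_j$ is the hypergeometric quantity from that theorem; a direct application of \eqref{eq2.2:eq7} shows that when $B_j=-1$,
\begin{equation*}
1-\sigma_j=(1+A_j)(1-T),\qquad T:=\tfrac{1}{2}\,{}_2F_1\!\left(1,1;\tfrac{\alpha+p\beta}{\mu\beta}+1;\tfrac{1}{2}\right).
\end{equation*}
Set $\Phi_j(z):=(1-\mu)\phi_j(z)+\mu\,\Theta_p^{m+1}(\alpha,\beta)f_j(z)/z^p$; the hypothesis reads $\Phi_j\prec (1+A_jz)/(1+B_jz)$, so $\Phi_j\in\mathcal P(\kappa_j)$ with $\kappa_j=(1-A_j)/(1-B_j)$.

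Since $\mathcal H=\Theta_p^m(\alpha,\beta)(f_1\star f_2)$ gives $\Theta_p^m(\alpha,\beta)\mathcal H=\Theta_p^{2m}(\alpha,\beta)(f_1\star f_2)$, a direct coefficient-comparison using $c_k:=(\alpha+(p+k)\beta)/(\alpha+p\beta)$, noting that on the $z^k$ term $\phi_j$ carries $c_k^m$ while $\Phi_j$ carries $c_k^m(1+\mu(c_k-1))$, yields the key identity
\begin{equation*}
(1-\mu)\dfrac{\Theta_p^m(\alpha,\beta)\mathcal H(z)}{z^p}+\mu\dfrac{\Theta_p^{m+1}(\alpha,\beta)\mathcal H(z)}{z^p}=\Phi_1(z)\star\phi_2(z)=\phi_1(z)\star\Phi_2(z),
\end{equation*}
where $\star$ denotes the Hadamard product of power series. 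This exhibits the LHS of \eqref{2.4.15} as a Hadamard product of one factor in $\mathcal P(\kappa_1)$ with another in $\mathcal P(\sigma_2)$.

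For the final bound, write $\Phi_1=\kappa_1+(1-\kappa_1)q_1$ and $\phi_2=\sigma_2+(1-\sigma_2)q_2$ with $q_j\in\mathcal P(0)$ having Herglotz representation $q_j(z)=\int_{|x|=1}(1+xz)/(1-xz)\,d\mu_j(x)$; a short computation gives
\begin{equation*}
\Phi_1\star\phi_2=1-(1-\kappa_1)(1-\sigma_2)+(1-\kappa_1)(1-\sigma_2)\,(q_1\star q_2).
\end{equation*}
The elementary identity $(1+xz)/(1-xz)\star(1+yz)/(1-yz)=(1+3xyz)/(1-xyz)$, combined with the verification $\text{Re}\{(1+3w)/(1-w)\}+1=2(1-|w|^2)/|1-w|^2>0$ on $\mathbb U$ and Lemma \ref{lem4}, yields $\text{Re}(q_1\star q_2)>-1$, whence $\text{Re}(\Phi_1\star\phi_2)>1-2(1-\kappa_1)(1-\sigma_2)$. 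For $B_1=B_2=-1$ this lower bound reduces to $1-(1+A_1)(1+A_2)(1-T)=\eta$ exactly, and the bound persists in the general case. Sharpness is obtained by choosing $\phi_j=Q_j$, the best dominants from Theorem \ref{theorem 2.4.3}. The main obstacle will be the coefficient bookkeeping that establishes the Hadamard identity in the second step, together with the Euler-integral computation \eqref{eq2.2:eq7} that pins $1-\sigma_j$ to the specific hypergeometric form in $\eta$; once these are in place, the convolution inequality via Herglotz is routine.
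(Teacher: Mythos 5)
Your convolution identity is correct and is the attractive part of the proposal: writing $\Phi_j$ for the full combination and $\phi_j=\Theta_p^m(\alpha,\beta)f_j(z)/z^p$, a coefficient check with $c_k=(\alpha+(p+k)\beta)/(\alpha+p\beta)$ does give
\begin{equation*}
(1-\mu)\dfrac{\Theta_p^m(\alpha,\beta)\mathcal H(z)}{z^p}+\mu\dfrac{\Theta_p^{m+1}(\alpha,\beta)\mathcal H(z)}{z^p}=\Phi_1(z)\star\phi_2(z),
\end{equation*}
and the Herglotz computation giving $\mathrm{Re}(q_1\star q_2)>-1$ is sound (although Lemma \ref{lem4} is not the right tool there; you only need to integrate the pointwise bound against the product of the two Herglotz measures). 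This is genuinely different from the paper's route, which convolves the two full combinations, obtaining $\Phi_1\star\Phi_2\in\mathcal P(\gamma_3)$ with $\gamma_3=1-2(1-\kappa_1)(1-\kappa_2)$ by the Stankiewicz--Stankiewicz result, and then writes the target as $\gamma\int_0^1 s^{\gamma-1}(\Phi_1\star\Phi_2)(sz)\,ds$ with $\gamma=(\alpha+p\beta)/(\mu\beta)$, pushing the refined estimate of Lemma \ref{lem2} through this integral; that computation lands exactly on $\eta$ for all admissible $B_1,B_2$.

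The gap is the sentence ``the bound persists in the general case.'' Your argument produces the constant $1-2(1-\kappa_1)(1-\sigma_2)$, and this equals $\eta$ only when $B_2=-1$ (where subordination to $(1+A_2z)/(1+B_2z)$ is the same as membership in $\mathcal P(\kappa_2)$). For general $B_2$ it is a different number: with $B_2=0$ and $(\alpha+p\beta)/(\mu\beta)=1$ one finds $1-\sigma_2=A_2/2$, whereas matching $\eta=1-4(1-\kappa_1)(1-\kappa_2)(1-T)$ would require $1-\sigma_2=2(1-\kappa_2)(1-T)=2A_2(1-\ln 2)\approx 0.614\,A_2$. In fact your constant is always $\geq\eta$, because $1-2(1-\kappa_2)(1-T)$ is itself a valid lower bound for $\mathrm{Re}\,\phi_2$ while $\sigma_2$ is the sharp one, so $1-\sigma_2\leq 2(1-\kappa_2)(1-T)$; but that comparison is precisely what is absent from your write-up, and without it the stated $\eta$ is not reached. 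Either supply that inequality explicitly, or adopt the paper's decomposition, which yields $\eta$ directly. The sharpness claim also needs more than ``choose $\phi_j=Q_j$'': for $B_1=B_2=-1$ one must exhibit concrete $f_1,f_2$ (point masses $x=y=1$ in the Herglotz representations) and verify that $\Phi_1\star\phi_2$ tends to $\eta$ as $z\to-1$, as is done at the end of the paper's proof.
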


\begin{proof}
On setting
\begin{equation}\label{2.4.16}
\phi_j(z)=(1-\mu)\dfrac{\Theta_p^m(\alpha,\beta)f_j(z)}{z^p}+\mu\dfrac{\Theta_p^{m+1}(\alpha,\beta)f_j(z)}{z^p}\quad
(j=1,2;z \in \mathbb U)
\end{equation}
and using \eqref{2.4.5}, we note that $\phi_j \in \mathcal
P(\gamma_j)$, where $\gamma_j=(1-A_j)/(1-B_j)$ for $j=1,2.$ Now, by
making use of the identity \eqref{eq2:eq6} and \eqref{2.4.16}, we
deduce that
\begin{equation}\label{2.4.17}
\Theta_p^m(\alpha,\beta)f_j(z)=\dfrac{p+\alpha \beta}{\mu
\beta}z^{p-\frac{p+\alpha\beta}{\mu \beta}}\int_0^z
t^{\frac{\alpha+p\beta}{\mu \beta}-1}\phi_j(t)dt\quad (j=1,2).
\end{equation}
Thus, by making use of \eqref{2.4.17} followed by  simple
calculations, we obtain
\begin{equation}\label{2.4.18}
\Theta_p^m(\alpha,\beta)\mathcal H(z)=\dfrac{p+\alpha \beta}{\mu
\beta}z^{p-\frac{p+\alpha\beta}{\mu \beta}}\int_0^z
t^{\frac{\alpha+p\beta}{\mu \beta}-1}\phi_0(t)dt\quad (z \in \mathbb
U),
\end{equation}
where
\begin{align}\label{2.4.19}
\phi_0(z)&=(1-\mu)\dfrac{\Theta_p^m(\alpha,\beta)\mathcal
H(z)}{z^p}+\mu\dfrac{\Theta_p^{m+1}(\alpha,\beta)\mathcal
H(z)}{z^p}\notag\\
&=\dfrac{p+\alpha \beta}{\mu \beta}z^{-\frac{p+\alpha\beta}{\mu
\beta}}\int_0^z t^{\frac{\alpha+p\beta}{\mu \beta}-1}(\phi_1 \star
\phi_2)(t)dt\quad (z \in \mathbb U).
\end{align}
Since $\phi_j \in \mathcal P(\gamma_j)$ for $j=1,2$, it follows from
\cite{SS} that
\begin{equation*}
(\phi_1 \star \phi_2) \in \mathcal P(\gamma_3)\quad
\left(\gamma_3=1-2(1-\gamma_1)(1-\gamma_2)\right).
\end{equation*}
and the bound $\gamma_3$ is the best possible. Hence, by using Lemma
\ref{lem2} in \eqref{2.4.19}, we deduce that
\begin{align*}
\text{Re}\{\phi_0(z)\}&=\dfrac{p+\alpha \beta}{\mu \beta}\int_0^1s^{\frac{p+\alpha \beta}{\mu \beta}-1}\,\text{Re}\{(\phi_1 \star\phi_2)(sz)\}ds\\
& \geq \dfrac{p+\alpha \beta}{\mu \beta}\int_0^1 s^{\frac{p+\alpha
\beta}{\mu\beta}-1}\left(2\gamma_3-1+\dfrac{2(1-\gamma_3)}{1+s|z|}\right)ds\\
& > \dfrac{\alpha+p \beta}{\mu \beta}\int_0^1 s^{\frac{p+\alpha \beta}{\mu\beta}-1}\left(2\gamma_3-1+\dfrac{2(1-\gamma_3)}{1+s}\right)ds\\
&=1-\dfrac{4(A_1-B_1)(A_2-B_2)}{(1-B_1)(1-B_2)}\left(1-\dfrac{\alpha+p\beta}{\mu\beta}\int_0^1 \dfrac{s^{\frac{\alpha+p \beta}{\mu\beta}-1}}{1+s}ds\right)\\
&=1-\dfrac{4(A_1-B_1)(A_2-B_2)}{(1-B_1)(1-B_2)}\left\{1-\dfrac{1}{2}\;_2F_1\left(1,1;\dfrac{\alpha+p\beta}{\mu
\beta}+1;\dfrac{1}{2}\right)\right\}\\
& =\eta\quad (z \in \mathbb U).
\end{align*}
When $B_1=B_2=-1$, we consider the functions $f_j \in \mathcal A_p$
satisfying the hypothesis \eqref{2.4.5} and defined by
\begin{equation*}
\Theta_p^m(\alpha,\beta)f_j(z)=\dfrac{p+\alpha \beta}{\mu
\beta}z^{p-\frac{p+\alpha\beta}{\mu \beta}}\int_0^z
t^{\frac{\alpha+p\beta}{\mu
\beta}-1}\left(\dfrac{1+A_jt}{1-t}\right)dt\quad (j=1,2; z
\in\mathbb U).
\end{equation*}
It follows from \eqref{2.4.19} and Lemma \ref{lem2} that
\begin{align*}
\phi_0(z)& =\dfrac{p+\alpha \beta}{\mu
\beta}z^{p-\frac{p+\alpha\beta}{\mu \beta}}\int_0^1
t^{\frac{\alpha+p\beta}{\mu
\beta}-1}\left(1-(1+A_1)(1+A_2)+\dfrac{(1+A_1)(1+A_2)}{1-sz}\right)ds\\
& =
1-(1+A_1)(1+A_2)+(1+A_1)(1+A_2)(1-z)^{-1}\,_2F_1\left(1,1;\dfrac{\alpha+p\beta}{\mu
\beta}+1;\dfrac{z}{z-1}\right)\\ & \longrightarrow
1-(1+A_1)(1+A_2)+(1+A_1)(1+A_2)(1-z)^{-1}\,_2F_1\left(1,1;\dfrac{\alpha+p\beta}{\mu
\beta}+1;\dfrac{1}{2}\right)
\end{align*}
as $z \to -1$, which evidently completes the proof of Theorem
\ref{theorem 2.4.6}.
\end{proof}

\section{Majorization properties}

In this section, we establish  majorization properties of functions
belonging to the class $\mathcal S_p^{m}(\alpha,\beta, A, B)$.

\begin{theorem}\label{theorem 2.5.1}
 Let the function $f \in \mathcal A_p$, and
 suppose that the function $g \in \mathcal S_p^{m}(\alpha,\; \beta,\; A,\;
 B)$. If
 \begin{equation}\label{2.5.2}
 \Theta_p^{m+1}(\alpha, \beta)f(z) \ll \Theta_p^{m+1}(\alpha,
 \beta)g(z)\quad (z \in \mathbb U),
 \end{equation}
 then
 \begin{equation}\label{2.5.3}
 \left\vert \Theta_p^{m+1}(\alpha, \beta)f(z)\right\vert \leq \left\vert \Theta_p^{m+1}(\alpha,
 \beta)g(z)\right\vert\quad( |z\ < \widetilde{r}),
 \end{equation}
 where $\widetilde{r}=\widetilde{r}(p,\alpha, \beta, A,B)$ is the
 smallest positive root of the equation
 \begin{equation}\label{2.5.4}
(\alpha+p\beta)|A|r^3-((\alpha+p\beta)+2\beta
|B|)r^2-((\alpha+p\beta)|A|+2\beta)r+(\alpha+p\beta)=0.
\end{equation}
\end{theorem}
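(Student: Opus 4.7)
The strategy is a MacGregor-style majorization argument tailored to the operator $\Theta_p^{m}(\alpha,\beta)$. Interpreting the majorization hypothesis \eqref{2.5.2} in the standard way, I would write
\[
\Theta_p^{m+1}(\alpha,\beta)f(z) = \varphi(z)\,\Theta_p^{m+1}(\alpha,\beta)g(z),\qquad z\in\mathbb{U},
\]
with $\varphi$ analytic in $\mathbb{U}$ and $|\varphi(z)|\leq 1$. Differentiating this identity once and then applying \eqref{eq2:eq6} at level $m+1$ to both sides to rewrite the derivatives produces the key identity
\[
\Theta_p^{m+2}(\alpha,\beta)f(z) = \varphi(z)\,\Theta_p^{m+2}(\alpha,\beta)g(z) + \frac{\beta z\,\varphi'(z)}{\alpha+p\beta}\,\Theta_p^{m+1}(\alpha,\beta)g(z),
\]
which isolates the contribution of $\varphi'$ from the multiplicative part of $\varphi$.

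Two classical ingredients then drive the estimate. First, the Schwarz--Pick inequality $|\varphi'(z)|\leq (1-|\varphi(z)|^2)/(1-|z|^2)$ controls the unknown derivative. Second, the hypothesis $g\in\mathcal{S}_p^{m}(\alpha,\beta,A,B)$, read through definition (ii), supplies a Schwarz function $w$ with $|w(z)|\leq |z|$ realizing
\[
\frac{\Theta_p^{m+2}(\alpha,\beta)g(z)}{\Theta_p^{m+1}(\alpha,\beta)g(z)} = \frac{1+Aw(z)}{1+Bw(z)},
\]
whence
\[
\left|\frac{\Theta_p^{m+1}(\alpha,\beta)g(z)}{\Theta_p^{m+2}(\alpha,\beta)g(z)}\right| \leq \frac{1+|B|\,|z|}{1-|A|\,|z|}.
\]

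Taking moduli in the displayed identity, substituting both estimates, and pushing the worst case $|\varphi|\to 1$ (which replaces $1-|\varphi|^2 = (1-|\varphi|)(1+|\varphi|)$ by $2(1-|\varphi|)$ and allows cancellation of the $1-|\varphi|$ factor), one finds that the majorization \eqref{2.5.3} is preserved whenever
\[
\frac{2\beta r(1+|B|r)}{(\alpha+p\beta)(1-r^2)(1-|A|r)}\leq 1,\qquad r=|z|.
\]
Clearing denominators and expanding $(\alpha+p\beta)(1-r^2)(1-|A|r) = (\alpha+p\beta)[1-|A|r-r^2+|A|r^3]$ collapses this to the cubic inequality
\[
(\alpha+p\beta)|A|r^3 - \bigl((\alpha+p\beta)+2\beta|B|\bigr) r^2 - \bigl((\alpha+p\beta)|A|+2\beta\bigr) r + (\alpha+p\beta) \geq 0,
\]
which is exactly the one whose smallest positive root is $\widetilde r$ in \eqref{2.5.4}. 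Since the cubic equals $\alpha+p\beta>0$ at $r=0$, it stays positive on an interval $[0,\widetilde r)$, so the desired bound $|\Theta_p^{m+1}(\alpha,\beta)f(z)|\leq |\Theta_p^{m+1}(\alpha,\beta)g(z)|$ holds throughout $|z|<\widetilde r$.

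The principal technical hurdle is the worst-case alignment: one must check that the triangle inequality, the Schwarz--Pick estimate on $|\varphi'|$, and the subordination-derived bound on $|\Theta_p^{m+1}g/\Theta_p^{m+2}g|$ all saturate in compatible directions as $|\varphi(z)|\to 1$ and $|w(z)|\to |z|$, so that the cubic \eqref{2.5.4} gives not only a sufficient but the exact threshold for the stated radius $\widetilde r$. Additional care is needed in tracking the signs of $A$ and $B$ under the standing assumption $-1\leq B<A\leq 1$, since these determine which direction the bounds on $(1+Aw)/(1+Bw)$ are sharp.
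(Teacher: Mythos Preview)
Your argument is essentially the paper's own proof: write $\Theta_p^{m+1}f=\varphi\,\Theta_p^{m+1}g$, differentiate and apply \eqref{eq2:eq6} to obtain the identity for $\Theta_p^{m+2}f$, then combine the Schwarz--Pick bound on $\varphi'$ with the subordination bound $|\Theta_p^{m+1}g/\Theta_p^{m+2}g|\le(1+|B|r)/(1-|A|r)$ and optimize over $x=|\varphi(z)|\in[0,1]$. The only cosmetic difference is that the paper packages the optimization as showing the quadratic $\Psi(x)=-\beta r(1+|B|r)x^{2}+(\alpha+p\beta)(1-r^{2})(1-|A|r)x+\beta r(1+|B|r)$ is increasing on $[0,1]$ (vertex at $x\ge1$), whereas you factor $1-x$ and reduce to $2K\le1$; these are the same condition, and your closing worries about sign alignment are unnecessary since the bounds already carry $|A|,|B|$.
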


\begin{proof}
Since $g \in \mathcal S_p^{m}(\alpha, \beta, A,
 B)$, it follows that
 \begin{equation*}
\Theta_p^{m+1}(\alpha, \beta)g(z)=\left(\dfrac{1+B
w(z)}{1+Aw(z)}\right)\Theta_p^{m+2}(\alpha, \beta)g(z)\quad (z \in
\mathbb U),
\end{equation*}
where $w$ is analytic in $\mathbb U$ with $w(0)=0$ and $|w(z)| \leq
1$ for all $z \in \mathbb U$. Thus,
\begin{equation}\label{2.5.5}
\left\vert \Theta_p^{m}(\alpha, \beta)g(z)\right\vert \leq
\dfrac{1+|B||z|}{1-|A||z|}\left\vert \Theta_p^{m+1}(\alpha,
\beta)g(z)\right\vert.
\end{equation}
 From \eqref{2.5.3} with the aid of \eqref{2.5.2}, we get
\begin{equation}\label{2.5.6}
\Theta_p^{m+1}(\alpha, \beta)f(z)=\varphi(z)\Theta_p^{m+1}(\alpha,
\beta)g(z)\; (z \in \mathbb U),
\end{equation}
where $\varphi$ is analytic in $\mathbb U$ and $|\varphi(z)| \leq 1$
in $\mathbb U$. Differentiating both the sides of\eqref{2.5.6} and
using the identity \eqref{eq2:eq6} for the functions $f$ and $g$ in
the resulting equation, we deduce that
\begin{equation}\label{2.5.7}
\left\vert \Theta_p^{m+2}(\alpha, \beta)f(z)\right\vert \leq
|\varphi(z)|\left\vert \Theta_p^{m+2}(\alpha,
\beta)g(z)\right\vert+\dfrac{\beta|z|}{\alpha+p\beta}|\varphi^{\prime}(z)|\left\vert
\Theta_p^{m+1}(\alpha, \beta)g(z)\right\vert.
\end{equation}
Now, by using the following estimate \cite{NEHARI}
\begin{equation*}
|\varphi^{\prime}(z)|\leq \dfrac{1-|\varphi(z)|^2}{1-|z|^2}\quad (z
\in \mathbb U)
\end{equation*}
and \eqref{2.5.5} in \eqref{2.5.7}, we get
\begin{equation*}
\left\vert\Theta_p^{m+2}(\alpha, \beta)f(z)\right\vert \leq
\left\{|\varphi(z)|+\dfrac{1-|\varphi(z)|^2}{1-|z|^2}\dfrac{\beta(1+|B||z|)|z|}{(\alpha+p\beta)(1-|A||z|)}\right\}\left\vert
\Theta_p^{m+2}(\alpha, \beta)g(z)\right\vert
\end{equation*}
which upon setting $|z|=r$ and $|\phi(z)|=x\,(0\leq x \leq 1)$
yields
 the inequality
\begin{equation}\label{2.5.8}
\left\vert\Theta_p^{m+1}(\alpha, \beta)f(z)\right\vert \leq
\dfrac{\Psi(x)}{(1-r^2)\{(\alpha+p\beta)(1-|A|r)\}}\left\vert
\Theta_p^{m+1}(\alpha, \beta)g(z)\right\vert,
\end{equation}
where
\begin{equation*}
\Psi(x)=-\beta r(1+|B|r)x^2+(\alpha+p\beta)(1-r^2)(1-|A|r)x+\beta
r(1+|B|r)
\end{equation*}
We note that the function $\Psi$ takes its maximum value at $x=1$
with $\widetilde{r}=\widetilde{r}(p,\; \alpha,\; \beta,\; A,\;B)$
the smallest positive root of the equation \eqref{2.5.4}.
Furthermore, if $0 \leq y \leq \widetilde{r}(p,\; \alpha,\; \beta,\;
A,\;B)$, then the function
\begin{equation*}
\Psi(x)=-\beta y(1+|B|y)x^2+(\alpha+p\beta)(1-y^2)(1-|A|y)x+\beta
y(1+|B|y)
\end{equation*}
increases in the interval $0 \leq x \leq 1$, so that
\begin{equation*}
\Psi(x) \leq \Psi(1)=(\alpha+p\beta)(1-y^2)(1-|A|y).
\end{equation*}
 Thus, in
view of the above fact and \eqref{2.5.8}, we get the assertion
\eqref{2.5.3} of Theorem \ref{theorem 2.5.1}
\end{proof}

Letting $\alpha=p+\ell-p\lambda$ and $\beta=\lambda$ in Theorem
\ref{theorem 2.5.1}, we have

\begin{corollary}\label{corollary 2.5.1}
Let the function $f\in \mathcal A_p$ and the function $g \in
\mathcal S_p^{m}(\alpha,\; \beta,\; A,\;
 B)$ satisfies
\begin{equation*}
\text{Re}\left\{\dfrac{\mathcal J_p^{m+2}(\lambda,
\ell)g(z)}{\mathcal J_p^{m+1}(\lambda, \ell)g(z)}\right\} \prec
\dfrac{1+Az}{1+Bz}\quad (z \in \mathbb U).
\end{equation*}
If $f(z) \ll g(z)$ in $\mathbb U$, then
\begin{equation*}
\left\vert\mathcal J_p^{m+2}(\lambda, \ell)f(z)\right\vert \leq
\left\vert\mathcal J_p^{m+2}(\lambda, \ell)g(z)\right\vert \quad
\text{for}\quad |z| \leq \widetilde{r}(p,\ell,\lambda,A,B),
\end{equation*}
where $\widetilde{r}(p,\;\ell,\;\lambda,\;A,\;B)$ is the smallest
positive root of the equation
\begin{equation*}
(p+\ell)|A|r^3-(p+\ell+\lambda
|B|)r^2-((p+\ell)|A|+2\lambda)r+p+\ell=0.
\end{equation*}
\end{corollary}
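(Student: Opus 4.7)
The plan is to deduce Corollary \ref{corollary 2.5.1} directly from Theorem \ref{theorem 2.5.1} by specializing the parameters to $\alpha = p+\ell - p\lambda$ and $\beta = \lambda$. With this choice one immediately gets $\alpha + p\beta = p+\ell$, and by the operator identification recorded as item (iii) in the list of special cases following \eqref{2.1.6}, the relation $\Theta_p^m(\alpha,\beta)f = \mathcal{J}_p^m(\lambda,\ell)f$ holds for every $m \in \mathbb{Z}$. Thus every appearance of $\Theta_p^{m+k}(\alpha,\beta)$ in Theorem \ref{theorem 2.5.1} may be replaced by $\mathcal{J}_p^{m+k}(\lambda,\ell)$.

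Next I would verify that the hypotheses of Theorem \ref{theorem 2.5.1} match those of the corollary. The subordination condition on $g$ in Corollary \ref{corollary 2.5.1}, namely
$$\dfrac{\mathcal{J}_p^{m+2}(\lambda,\ell)g(z)}{\mathcal{J}_p^{m+1}(\lambda,\ell)g(z)} \prec \dfrac{1+Az}{1+Bz},$$
is precisely the membership $g \in \mathcal{S}_p^m(\alpha,\beta,A,B)$ via the class definition (ii) following Definition \ref{def1} (the $\mu=1$ specialization of Definition \ref{def1}). The majorization hypothesis $f(z) \ll g(z)$ in $\mathbb{U}$ and the claimed bound $|\mathcal{J}_p^{m+2}(\lambda,\ell)f(z)| \leq |\mathcal{J}_p^{m+2}(\lambda,\ell)g(z)|$ on $|z| \leq \widetilde{r}$ are the direct translates of the assumption \eqref{2.5.2} and the conclusion \eqref{2.5.3} under the operator identification described above.

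Finally, substituting $\alpha + p\beta = p+\ell$ and $\beta = \lambda$ into the cubic \eqref{2.5.4} that defines $\widetilde{r}$ in Theorem \ref{theorem 2.5.1} yields
$$(p+\ell)|A|\,r^3 - \bigl((p+\ell) + 2\lambda|B|\bigr) r^2 - \bigl((p+\ell)|A| + 2\lambda\bigr) r + (p+\ell) = 0,$$
whose smallest positive root is then the desired $\widetilde{r}(p,\ell,\lambda,A,B)$, matching the cubic displayed in the corollary (up to a possible typographical discrepancy in the coefficient of $r^2$, which one should double-check against \eqref{2.5.4} during the write-up).

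There is no genuine conceptual obstacle in this argument: the whole proof is a bookkeeping exercise in parameter specialization and operator renaming. The only care needed is in confirming that each hypothesis and conclusion of Theorem \ref{theorem 2.5.1} transports correctly under the identification $\Theta_p^m(\alpha,\beta) \leftrightarrow \mathcal{J}_p^m(\lambda,\ell)$, and that the arithmetic in substituting the specialized values of $\alpha+p\beta$ and $\beta$ into \eqref{2.5.4} is executed cleanly.
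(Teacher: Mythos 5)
Your proposal is exactly the paper's route: the corollary is obtained by the bare specialization $\alpha=p+\ell-p\lambda$, $\beta=\lambda$ (so $\alpha+p\beta=p+\ell$) in Theorem \ref{theorem 2.5.1}, using the identification $\Theta_p^{m}(\alpha,\beta)=\mathcal J_p^{m}(\lambda,\ell)$ from item (iii). You are also right that substituting into \eqref{2.5.4} gives the $r^2$-coefficient $(p+\ell)+2\lambda|B|$, so the $\lambda|B|$ appearing in the printed corollary is a typographical slip rather than a defect in your argument.
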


\begin{remark}\label{remark 2.5.1}
Setting $A=1-(2\eta/p), B=-1$ and $m=-1\;\alpha=0$ in Theorem
\ref{theorem 2.5.1}, equation \eqref{2.5.4} becomes
\begin{equation*}
|p-2\eta|r^3-(p+2)r^2-(|p-2\eta|+2)r+p=0.
\end{equation*}
It is easily seen that $r=-1$ is a solution of the above equation
and the other two roots can be obtained by solving
\begin{equation}\label{2.5.9}
|p-2\eta|r^2-(|p-2\eta|+p+2)r+p=0.
\end{equation}
So, we can easily find the smallest positive root of \eqref{2.5.9}.
\end{remark}

In view of the above remark, we deduce the following result which,
in turn, yields the corresponding work of MacGregor \cite[p.96,
Theorem 1B]{mag3} for $p=1$ and $\eta=0.$

\begin{corollary}\label{cor 2.5.2}
Let the functions $f \in \mathcal A_p$ and $g \in \mathcal
S_p^{*}(\eta)\,(0 \leq \eta < p)$ be such that
\begin{equation*}
f(z) \ll g(z)\quad (z \in \mathbb U),
\end{equation*}
then
\begin{equation*}
|f^{\prime}(z)| \leq |g^{\prime}(z)|\quad \text{for}\quad |z| \leq
\widetilde{r}(p,\eta),
\end{equation*}
where
\begin{equation*}
\widetilde{r}(p,\eta)=\begin{cases}
\dfrac{p+|p-2\eta|+2-\sqrt{(p+|p-2\eta|+2)^2-4p|p-2\eta|}}{2|p-2\eta|},
& \eta \ne \dfrac{p}{2}\\
\dfrac{p}{p+2}, & \eta=\dfrac{p}{2}.
\end{cases}
\end{equation*}
\end{corollary}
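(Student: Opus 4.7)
The plan is to obtain Corollary 2.5.2 as a direct specialization of Theorem \ref{theorem 2.5.1}. Choose $m=-1$, $\alpha=0$, $\beta=1$, $A=1-2\eta/p$, and $B=-1$. Under item (ii) of the list following Definition \ref{def1}, the membership condition for $\mathcal S_p^{m}(\alpha,\beta,A,B)$ becomes
\begin{equation*}
\frac{\Theta_p^{1}(0,1)g(z)}{\Theta_p^{0}(0,1)g(z)} \prec \frac{1+Az}{1+Bz};
\end{equation*}
since $\Theta_p^{0}(0,1)$ is the identity and $\Theta_p^{1}(0,1)g(z)=zg'(z)/p$ by \eqref{eq2:eq5}, this collapses to $zg'(z)/g(z) \prec (p+(p-2\eta)z)/(1-z)$, whose image is the half-plane $\{\operatorname{Re} w > \eta\}$; that is, $g \in \mathcal S_p^{\ast}(\eta)$. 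Likewise, the majorization hypothesis $\Theta_p^{m+1}(\alpha,\beta)f \ll \Theta_p^{m+1}(\alpha,\beta)g$ of Theorem \ref{theorem 2.5.1} reduces to $f \ll g$, matching the hypothesis of the corollary.

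With these parameters, the inequality produced by the chain \eqref{2.5.6}--\eqref{2.5.8} (via the identity \eqref{eq2:eq6}) becomes $|\Theta_p^{1}(0,1)f(z)| \leq |\Theta_p^{1}(0,1)g(z)|$ on $|z|\leq \widetilde r$. Substituting $\Theta_p^{1}(0,1)h(z)=zh'(z)/p$ for $h=f,g$, dividing by $|z|/p$, and using continuity at the origin, I get $|f'(z)| \leq |g'(z)|$ on $|z|\leq \widetilde r$, which is the desired conclusion.

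It remains to identify $\widetilde r$. Substituting the chosen values into \eqref{2.5.4} produces the cubic
\begin{equation*}
|p-2\eta|r^{3}-(p+2)r^{2}-(|p-2\eta|+2)r+p=0,
\end{equation*}
which is precisely the equation analysed in Remark \ref{remark 2.5.1}. Since $r=-1$ is a root, factoring $(r+1)$ leaves the quadratic $|p-2\eta|r^{2}-(|p-2\eta|+p+2)r+p=0$; the smaller positive root (the minus sign in the quadratic formula) gives the stated $\widetilde r(p,\eta)$ when $\eta \ne p/2$, while in the degenerate case $\eta = p/2$ the quadratic reduces to $-(p+2)r+p=0$, yielding $\widetilde r = p/(p+2)$. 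The only delicate point in the argument is the initial dictionary between $\Theta_p^{m+2}(0,1)$ at $m=-1$ and the operator $f \mapsto zf'/p$; once this is in place, the corollary is a substitution followed by the short factoring already recorded in Remark \ref{remark 2.5.1}. The MacGregor case $p=1$, $\eta=0$ is then recovered by direct specialization.
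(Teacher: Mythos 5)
Your proposal is correct and follows essentially the same route as the paper: the paper likewise obtains Corollary \ref{cor 2.5.2} by specializing Theorem \ref{theorem 2.5.1} with $m=-1$, $\alpha=0$, $A=1-2\eta/p$, $B=-1$ and then invoking the factorization of the cubic \eqref{2.5.4} recorded in Remark \ref{remark 2.5.1}. Your added care in translating $\Theta_p^{1}(0,1)h=zh'/p$ (and in reading the conclusion of the theorem at level $m+2$ rather than $m+1$, as its proof actually delivers) only makes explicit what the paper leaves implicit.
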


Letting $m=-1,\; A=1-(2\eta/p)$ and $B=-1$ in Corollary \ref{cor
2.5.2}, it follows that for $(p-1)/2 \leq \eta < p$

\begin{equation}\label{2.5.10}
\mathcal C_p(\eta) \subset \mathcal S_p^{*}(\rho),
\end{equation}
where
\begin{equation}\label{2.5.11}
\rho=p\left\{{_2F_1\left(1,2(p-\eta);p+1;\dfrac{1}{2}\right)}\right\}^{-1}.
\end{equation}
Thus, in view of \eqref{2.5.10} and Corollary \ref{cor 2.5.2}, we
obtain the following result which, in turn,  give the corresponding
work by MacGregor \cite [p.96, Theorem 1C]{mag3},
 for $p=1$ and $\eta=0$.
\begin{corollary}\label{corollary 2.5.3}
Let $(p-1)/2 \leq \eta < p.$ If $f \in \mathcal A_p$ and $g \in
\mathcal C_p(\eta)$ satisfy
\begin{equation*}
f(z) \ll g(z)\quad (z \in \mathbb U),
\end{equation*} then
\begin{equation*}
|f^{\prime}(z)| \leq |g^{\prime}(z)|\quad \text{for}\quad |z|<
\widetilde{r}(p,\rho),
\end{equation*}
where
\begin{equation*}
\widetilde{r}(p,\rho)=\begin{cases}
\dfrac{p+|p-2\rho|+2-\sqrt{(p+|p-2\rho|+2)^2-4p|p-2\rho|}}{2|p-2\rho|},
& \rho \ne \dfrac{p}{2}\\
\dfrac{p}{p+2}, & \rho=\dfrac{p}{2}
\end{cases}
\end{equation*}
and $\rho$ is given by \eqref{2.5.11}.
\end{corollary}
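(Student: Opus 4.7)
The plan is to combine the inclusion \eqref{2.5.10} with Corollary \ref{cor 2.5.2} in a direct way. The hypothesis $(p-1)/2 \le \eta < p$ is precisely the range in which \eqref{2.5.10} applies, so given $g \in \mathcal C_p(\eta)$ we may first upgrade the information we have about $g$: namely, $g$ belongs to the starlike class $\mathcal S_p^{\ast}(\rho)$, where $\rho$ is the number defined by \eqref{2.5.11}. This step is purely a citation of the inclusion \eqref{2.5.10}; no new estimate is needed.

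With $g \in \mathcal S_p^{\ast}(\rho)$ in hand, the second step is to apply Corollary \ref{cor 2.5.2} directly, but now with the parameter $\eta$ in that statement replaced by $\rho$. Since by hypothesis $f \ll g$ in $\mathbb U$, that corollary yields $|f^{\prime}(z)| \le |g^{\prime}(z)|$ on the disk $|z| < \widetilde r(p,\rho)$, where the radius $\widetilde r(p,\rho)$ is obtained from the formula in Corollary \ref{cor 2.5.2} simply by substituting $\rho$ for $\eta$. This produces exactly the two-case piecewise expression stated in the corollary, with the special case $\rho = p/2$ giving $\widetilde r(p,\rho) = p/(p+2)$ and the generic case giving the root of the associated quadratic.

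There is essentially no analytic obstacle; the whole argument is a chaining of two previously established facts. The only thing worth verifying as one writes the proof is that the starlikeness order $\rho$ in \eqref{2.5.11} automatically lies in $[0,p)$ throughout the admissible range $(p-1)/2 \le \eta < p$, so that Corollary \ref{cor 2.5.2} is legitimately applicable with $\rho$ in place of $\eta$. Once that is observed, the conclusion follows immediately and the two-branch definition of $\widetilde r(p,\rho)$ matches, term-for-term, the two-branch definition of $\widetilde r(p,\eta)$ in Corollary \ref{cor 2.5.2}, which completes the proof.
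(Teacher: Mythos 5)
Your proposal is correct and is essentially identical to the paper's own derivation: the paper also obtains this corollary by first invoking the inclusion \eqref{2.5.10}, $\mathcal C_p(\eta) \subset \mathcal S_p^{*}(\rho)$ with $\rho$ given by \eqref{2.5.11}, and then applying Corollary \ref{cor 2.5.2} with $\rho$ in place of $\eta$. Your added remark that one should check $\rho \in [0,p)$ is a sensible (and in fact slightly more careful) observation than the paper makes explicit.
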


\bibliographystyle{plain}

\begin{thebibliography}{99}
\bibitem{SRS2} S.R. Swamy, Inclusion properties of certain subclasses of
analytic functions, Int. Math. Forum, 7(33-36)(2012), 1751-1760.
\bibitem{SRS1} S.R. Swamy, Inclusion properties of certain subclasses of analytic functions defined by a generalized multiplier
transformation, Int. J. Math. Anal., 6(2012), 1553-1564.
\bibitem{Aghalary} R.Aghalary, R. M. Ali, S. B. Joshi  and  V.Ravichandran, Inequalities for functions defined by certain linear operator,
Int. J. Math. Sci., 4(2005), 267-274.
\bibitem {fmal} F.M.Al-Oboudi, On univalent functions defined by a generalized Salagean operator, Int. J. Math. Math. Sci., 27 (2004), 1429-1436.

\bibitem{AMo}  M.K.Aouf  and A.O. Mostafa, On a subclasses of $n-p$-valent
prestarlike functions, Comput. Math. Appl., 55 (2008), 851-861.

\bibitem{ACatas} A. C$\check{\text{a}}$tas, On certain classes of $p$-valent
functions defined by  multiplier transformation, in:
\emph{Proceedings of the International Symposium on Geometric
Function Theory and Applications}, Istanbul, Turkey, 2007, 241-250.

\bibitem {chokim} N.E. Cho  and  T.H.Kim, Multiplier transformations and strongly
close-to-convex functions, Bull. Korean Math. Soc., 40(3)(2003),
399-410.

\bibitem {chosri} N.E. Cho .  and  H.M.Srivastava, Argument estimates of certain
analytic functions defined by a class of multiplier transformations,
Math. Comput. Modelling, 37(1-2)(2003), 39-49.

\bibitem{KO}  M. Kamali  and H. Orhan, On a subclass of certain starlike
functions with negative coefficients, Bull. Korean Math. Soc., 41
(2004), 53-71.

\bibitem{HH} H.Orhan  and H. Kiziltunc , A generalization on subfamily of
$p$-valent functions with negative coefficients, Appl. Math.
Comput., 155 (2004), 521-530.

\bibitem {DZ} D.\v{Z}. Pashkouleva, The starlikeness and spiral-convexity of
certain subclasses of analytic functions, in H.M. Srivastava and S.
Owa (Eds.), \emph{Current Topics in Analytic Function Theory}, World
Scientific Publishing Company, Singapore, New Jersey, London and
Hong Kong, 1992, pp. 266-273.

\bibitem {patel1} J. Patel, On certain subclasses of multivalent functions
involving Cho-Kwon-Srivastava operator, Ann. Univ. Mariae
Curie-Sklodowska Sect. A, LX(2006), 75-86.

\bibitem {patel2} J.Patel,N.E. Cho and H.M. Srivastava , Certain Subclasses of
multivalent functions associated with a family of linear operators,
Math. Comput. Modelling, 43 (2006), 320-338.


\bibitem {sal} G.S. Salagean, Subclasses of univalent functions, Lecture
Notes in Math., Springer-Verlag, 1013 (1983), 362-372.

\bibitem{SHV} S.Kumar Shivaprasad ,C.H Taneja  and V Ravichandran, Classes
of multivalent functions defined by Dziok-Srivastava linear operator
and multiplier transformation, Kyungpook Math. J., 46 (2006),
97-109.

\bibitem{SSA} H.M Srivastava , K.B Suchitra ,  Adolf B Stephen and S.
Sivasubramanian, Inclusion and neighborhood properties of certain
subclasses of multivalent functions of complex order, J. Ineq. Pure
Appl. Math., 7(5)(2006), 1-8.

\bibitem{SS} J. Stankiewicz  and Z. Stankiewicz, Some applications of the Hadamard convolution in the theory of functions, Ann. Uinv. Mariae Curie-Sklodowska Sect.A 22-24 (1968/70), 175-181.


\bibitem{WW} E.T. Whittaker  and G.N. Watson, \emph{A Course on Modern Analysis, An Introduction to the General
Theory of Infinite Processess and of Analytic Functions: With an
Account of the Principal Transcendental Functions}, Fourth Edn.,
Cambridge University Press, 1927 (Reprinted).

\bibitem {DRW} D.R Wilken and J. Feng, A remark on convex and starlike
functions, J. London Math. Soc., (2)21(1980), 287-290.

\bibitem {DZ} D.\v{Z}. Pashkouleva, The starlikeness and spiral-convexity of
certain subclasses of analytic functions, in H.M. Srivastava and S.
Owa (Eds.), \emph{Current Topics in Analytic Function Theory}, World
Scientific Publishing Company, Singapore, New Jersey, London and
Hong Kong, 1992, pp. 266-273.

\bibitem {MM3} S.S. Miller  and P.T. Mocanu, Univalent solutions of
Briot-Bouquet differential subordinations, J. Differential
Equations, 58 (1985), 297-309.

\bibitem {DJR} D.J.  Hallenbeck and St Ruscheweyh, Subordination by convex
functions, Proc. Amer. Math. Society, 52(1975), 191-195.

\bibitem{mag1} T.H MacGregor, Functions whose derivative has a positive real part, Trans. amer. Math. Soc., 104 (1962), 532-537.

\bibitem{MM2} S.S. Miller  and P.T. Mocanu, \emph{Differential Subordinations, Theory and Applications}, Series on Monographs and Textbooks in Pure and Applied
    Mathematics, Vol. 225, Marcel Dekker Inc., New York and Basel, 2000.
    
\bibitem{NEHARI} Z. Nehari, \emph{Conformal Mapping}, MacGraw Hill Book Company, New York, Toronto and London, 1952.

\bibitem{mag3} T.H. MacGregor, Majorization by univalent functions, Duke Math. J., 34 (1967), 95-102.
\end{thebibliography}

\end{document}